\begin{document}

\author{Dragomir \v Sari\' c}

\address{Department of Mathematics, CUNY Queens College,
65-30 Kissena Blvd., Flushing, NY 11367}
\email{Dragomir.Saric@qc.cuny.edu}

\address{Mathematics PhD program, CUNY Graduate Center, 365 Fifth
Avenue, New York, NY 10016-4309}

\theoremstyle{definition}

\newtheorem{definition}{Definition}[section]
\newtheorem{remark}[definition]{Remark}
\newtheorem{example}[definition]{Example}

\newtheorem*{notation}{Notation}

\theoremstyle{plain}

 \newtheorem{proposition}[definition]{Proposition}
 \newtheorem{theorem}[definition]{Theorem}
 \newtheorem{corollary}[definition]{Corollary}
 \newtheorem{lemma}[definition]{Lemma}

\def\H{{\mathbf H}}
\def\F{{\mathcal F}}
\def\R{{\mathbf R}}
\def\Q{{\mathbf Q}}
\def\Z{{\mathbf Z}}
\def\E{{\mathcal E}}
\def\N{{\mathbf N}}
\def\X{{\mathcal X}}
\def\Y{{\mathcal Y}}
\def\C{{\mathbf C}}
\def\D{{\mathbf D}}
\def\G{{\mathcal G}}

\title[Zygmund vector fields and Hilbert transform]{Zygmund vector fields, Hilbert transform and Fourier coefficients
in shear coordinates}

\subjclass{}

\keywords{}
\date{\today}

\maketitle

\begin{abstract}
We parametrize the space $\mathcal{Z}$ of Zygmund vector fields on
the unit circle in terms of infinitesimal shear functions on the
Farey tesselation. Then we express the Hilbert transform and the
Fourier coefficients of the Zygmund vector fields in terms of the
above parametrization by infinitesimal shear functions. Finally, we
compute the Weil-Petersson metric on the Teichm\"uller space of a
punctured surface in terms of shears.
\end{abstract}

\section{Introduction}

The hyperbolic plane $\H$ has the unit circle $S^1$ as its ideal
boundary. Vector fields on the unit circle $S^1$ are describing
infinitesimal deformations of the circle maps. We study Zygmund
vector fields which arise as tangent vectors to the space of
quasisymmetric maps of $S^1$ at the identity map $id:S^1\to S^1$
(see \cite{GS}, \cite{Riem}). The Teichm\"uller space $T(\H )$ of
the hyperbolic plane $\H$, called the {\it universal Teichm\"uller
space}, consists of all quasisymmetric maps of the unit circle $S^1$
which fix $1,i,-1\in S^1$. Therefore, the space $\mathcal{Z}$ of all
Zygmund vector fields on $S^1$ which vanish at $1,i,-1\in S^1$ is
the tangent space to the universal Teichm\"uller space $T(\H )$ at
the {\it basepoint} $id\in T(\H )$ (see \cite{GL}).

The Farey tesselation $\F$ of the hyperbolic plane $\H$ is a locally
finite ideal triangulation of $\H$ which is invariant under the
group of isometries of $\H$ generated by hyperbolic reflections in
the sides of a complementary triangle of $\F$. The group of
orientation preserving isometries of $\H$ that setwise preserve $\F$
is isomorphic to $PSL_2(\mathbf{Z})$ and it acts simply transitively
on the oriented edges of $\F$. A vector field on $S^1$ naturally
induces a real-valued function on the edges of $\F$, called the {\it
infinitesimal shear function} of $V$ (see Section 4). We parametrize
the space $\mathcal{Z}$ of Zygmund vector fields on $S^1$ that
vanish at $1,i,-1\in S^1$ in terms of the corresponding shear
functions. Then we proceed to compute the Hilbert transform of
Zygmund vector fields in terms of infinitesimal shear functions. We
apply these formulas to compute the Weil-Petersson metric on
Teichm\"uller spaces of finite punctured surfaces and the Fourier
coefficients of Zygmund vector fields. Finally, we show that the
formula for the Weil-Petersson metric in shear coordinates extend by
continuity to the Weil-Petersson metric of the boundary
Teichm\"uller space in its shear coordinates (which is a result of
Masur \cite{mas}). We give more details below.

A quasisymmetric map $h:S^1\to S^1$ induces a shear function
$s:\mathcal{F}\to\R$ which assigns to each edge $e\in\F$ a real
number
$$
s(e)=\log cr(a,b,c,d)
$$
where the endpoints of $e$ are $b,d\in S^1$, the two adjacent
complementary triangles of $\F$ with common side $e$ have third
vertices $a,c\in S^1$, and
$cr(a,b,c,d)=\frac{(b-c)(d-a)}{(b-a)(d-c)}$. A {\it fan of
geodesics} of $\F$ with {\it tip} $p\in S^1$ is the set $\F_p$ of
all geodesics of $\F$ with one endpoint $p$. Fix a correspondence of
$\F_p$ with the integers $\Z$ such that $\F_p=\{e^p_n\}_{n\in\Z}$
and $e_n^p$ is adjacent to $e_{n+1}^p$ for all $n\in\Z$. We have the
following characterization of quasisymmetric maps in terms of shear
functions.

\vskip .2 cm

\noindent {\bf Theorem 1.} \cite{Sa2} {\it A shear function
$s:\F\to\R$ induces a quasisymmetric map of the unit circle if and
only if there exists a constant $M\geq 1$ such that for each fan
$\F_p=\{ e^p_n\}_{n\in\Z}$ and for each $m,k\in Z$, $k\geq 0$, we
have
$$
\frac{1}{M}\leq e^{s(e_m)}\frac{1+e^{s(e_{m+1})}+\cdots
+e^{s(e_{m+1})+\cdots +s(e_{m+k})}}{1+e^{-s(e_{m-1})}+\cdots
+e^{-s(e_{m-1})-\cdots -s(e_{m-k})}}\leq M.
$$
}

\vskip .2 cm

A differentiable path $h_t:S^1\to S^1$, $|t|<\epsilon$, of
quasisymmetric maps with $h_0=id$ induces a Zygmund vector field $V$
on $S^1$ by the formula
$$
V(z)=\frac{d}{dt}h_t(z)|_{t=0}.
$$
If $s_t:\F\to\R$ is the path of shear functions for $h_t$, then the
derivative
$$
\frac{d}{dt}s_t(e)|_{t=0}=\dot{s}(e)
$$
for $e\in\F$ induces a {\it infinitesimal shear function}
$$
\dot{s}:\F\to\R
$$
corresponding to the Zygmund vector field $V$. Our first result
given in Theorem \ref{thm:zygmund-parametrization-shears} is a
characterization of infinitesimal shear functions which induce
Zygmund bounded vector fields on $S^1$.

\vskip .2 cm

\noindent {\bf Theorem 2.} {\it Let $\dot{s}:\F\to\R$ be a shear
function. Then $\dot{s}$ induces a Zygmund vector field $V$ on $S^1$
if and only if there exists a constant $C>0$ such that for all fans
$\F_p=\{ e^p_n\}_{n\in\Z}$ and for all $m,k\in\Z$, $k\geq 0$, we
have
\begin{equation*}
\begin{split}
\Big{|}\dot{s}(e_m)+\frac{k}{k+1}[\dot{s}(e_{m+1})+\dot{s}(e_{m-1})]
+\frac{k-1}{k+1}[\dot{s}(e_{m+2}) +\\ +\dot{s}(e_{m-2})]+  \cdots
+\frac{1}{k+1}[\dot{s}(e_{m+k})+\dot{s}(e_{m-k})]\Big{|}\leq C.
\end{split}
\end{equation*}
}

\vskip .2 cm

A vector field on $S^1$ that vanishes at $1,i,-1$ is uniquely
determined by its infinitesimal shear function. Therefore the above
theorem gives an explicit (and rather simple) parametrization of the
space $\mathcal{Z}$ (of all normalized Zygmund bounded vector
fields) which is the tangent space at the basepoint of the universal
Teichm\"uller space $T(\H )$. This should be compared to the
question of characterizing Zygmund vector fields (or Zygmund maps)
in terms of its Fourier coefficients where no explicit condition is
known (see \cite{Kat}). The key idea in the proof of the above
theorem is to ``decompose'' a vector field $V$ into an infinite sum
of the vector fields $V_p$ over all fans of $\F_p$ of $\F$. The
infinitesimal shear functions of $V_p$ are zero on all geodesics of
$\F$ except on the geodesics of $\F_p$ where they are equal to (the
one-half of) the infinitesimal shear function $\dot{s}$. We obtain
(see Corollary \ref{cor:zyg_series_estimate})
$$
V(z)=\sum_{p\in\F^0}V_p(z)
$$
where $\F^0$ is the set of vertices of $\F$; the convergence is
absolute and uniform; each partial sum of $V_p$'s and the sum $V$ is
Zygmund bounded with a Zygmund constant continuously depending on
$C$. The decomposition of $V$ into vector fields $V_p$ corresponding
to the fans of $\F$ is well-suited for the Zygmund vector fields,
while writing the sum over all geodesics of $\F$ does not even
produce a pointwise convergence on $S^1$. This is in an analogy with
the C\'esaro summation of the Fourier series of a continuous
function where the sum of the Fourier series does not necessarily
converge at a single point of $S^1$ (see \cite{Kat}).

An interesting problem in Teichm\"uller theory is to express the
complex structure of Teichm\"uller spaces in terms of the hyperbolic
metric on the underlining surfaces. Kerckhoff proved that the
Hilbert transform $H$ of the Zygmund vector fields on $S^1$ is the
almost complex structure on the tangent space $\mathcal{Z}$ at the
basepoint of the universal Teichm\"uller space $T(\H )$ (see
Nag-Verjovsky \cite{NV}). We compute the Hilbert transform in terms
of infinitesimal shear functions (which are hyperbolic geometry
invariants) which gives a solution to computing of the almost
complex structure on the universal Teichm\"uller space $T(\H )$ in
terms of the hyperbolic geometry (see Theorem
\ref{thm:Hilbert-on-Zygmund} and Theorem
\ref{Hilbert-single-shear}).

\vskip .2 cm

\noindent {\bf Theorem 3.} {\it Let $V$ be a Zygmund bounded vector
field on $S^1$ and let $\dot{s}:\F\to\R$ be the corresponding shear
function. Then the Hilbert transform of $V$ is given by the series
$$
HV(x)=\sum_{p\in\hat{\Q}}HV_p(x)
$$
where $HV_p$ is the Hilbert transform of $V_p$ and $V_p$ is defined
as above using the shears in the fan $\F_p$. The series converges
uniformly on $S^1$.

Moreover, the Hilbert transform $H(V_p)$ of the vector field $V_p$
corresponding to the fan $\F_p=\{ e^p_n\}_{n\in\Z}$ is given by the
formula
$$
HV_p(z)=\frac{1}{2}\sum_{n=-\infty}^{\infty} \dot{s}(e_n^p)
\frac{(z-a_n^p)(z-b_n^p)}{a_n^p-b_n^p}\log
\Big{|}\frac{z-b^p_n}{z-a^p_n}\Big{|}
$$
where $a_n^p$ is the initial point and $b_n^p$ is the terminal point
of $e_n^p$.}

\vskip .2 cm

Theorem 3 gives the Hilbert transform on Zygmund vector fields on
$S^1$ in terms of infinitesimal shear functions from Theorem 2.
Penner \cite{Pe3} gave a related formula for the Hilbert transform
on $C^{5/2+\epsilon}$-smooth vector fields which are contained in
the space of Zygmund vector fields. The formula in Theorem 3 is a
double sum with the exterior sum over all all fans of $\F$ and the
interior sum over all geodesic of a single fan. We remark that a
single sum over all geodesics in $\F$ does not necessarily converge
which forces the double sum. The formula of Theorem 3 can be
explicitly given in terms of the hyperbolic geometry and we obtain
an expression for the infinitesimal shear function $H(\dot{s})$ of
the Zygmund bounded vector field $H(V)$ in terms of the hyperbolic
invariants of $\F$ and the infinitesimal shear function $\dot{s}$
(see Corollary \ref{cor:shears_of_Hilbert(V)} and Corollary
\ref{cor:shears_of_Hilbert(V)_invariant}).

We also compute the Fourier coefficients of Zygmund bounded vector
fields in terms of infinitesimal shear functions (see Theorem
\ref{thm:Fourier_vect_field}).

\vskip .2 cm

\noindent {\bf Theorem 4.} {\it Let $V:S^1\to \mathbf{C}$ be a
Zygmund vector field and let $\dot{s}:\mathcal{F}\to\R$ be the
corresponding infinitesimal shear function. Then the $n$-th Fourier
coefficient $\widehat{V}(n)$ of the vector field $V$ is given by
$$
\widehat{V}(n)=\sum_{p\in\mathcal{F}^0}\widehat{V}_p(n),
$$
where the convergence is absolute and
\begin{equation*}
\begin{split}
\widehat{V}_p(n)=\sum_{e_n=(e^{i\phi_0^n},e^{i\phi_1^n})\in\mathcal{F}_p}
\frac{\dot{s}(e_n)}{4\pi
(e^{i\phi_0}-e^{i\phi_1})}\Big{[}\frac{e^{i(2-n)\phi_1}
-e^{i(2-n)\phi_0}}{i(2-n)}- (e^{i\phi_0}+e^{i\phi_1})\times \\
\times \frac{e^{i(1-n)\phi_1}-e^{i(1-n)\phi_0}}{i(1-n)} +
e^{i(\phi_0+\phi_1)}\frac{e^{-in\phi_1}-e^{-in\phi_0}}{-in}\Big{]}
\end{split}
\end{equation*}
}

Finally, we consider the almost-complex structure on the
Teichm\"uller space $T(S)$ of a finite area hyperbolic surface $S$
with at least one puncture. Let $\tau$ be an ideal geodesic
triangulation of $S$ and let $\tilde{\tau}$ be the lift of $\tau$ to
the hyperbolic plane $\H$. Then a infinitesimal shear function
$\dot{\tilde{s}}:\tilde{\tau}\to\R$ which is invariant under the
deck transformations for the universal covering of $S$ and which
satisfies the cusp condition (see Section 8) represents a tangent
vector $V$ to $T(S)$. We obtain a formula for the infinitesimal
shear function $H(\dot{\tilde{s}})$ of the Hilbert transform of $V$
and for the Weil-Petersson metric on $T(S)$ (see Corollary
\ref{cor:shear_of_Hilbert_invariant} and Theorem
\ref{thm:WP_in_shears}).

\vskip .2 cm

\noindent {\bf Theorem 5.} {\it Let
$\dot{\tilde{s}}:\tilde{\tau}\to\R$ be the infinitesimal shear
function of a Zygmund bounded vector field $V$ invariant under a
co-finite group $G$, where $\tilde{\tau}$ is an ideal geodesic
triangulation of $\H$ invariant under $G$. Then the infinitesimal
shear function
$$
H(\dot{\tilde{s}}):\tilde{\tau}\to\R
$$
of the vector field $H(V)$ obtained by taking the Hilbert transform
of $V$ is given by
\begin{equation*}
H(\dot{\tilde{s}})((b,d))=\sum_{n\in\mathbf{N}}\dot{\tilde{s}}(e_n)\Delta_{b,d}(e_n)
\end{equation*}
where $(b,d)\in\tilde{\tau}$ is the common boundary side of the two
complementary triangles of $\tilde{\tau}$ with vertices $(a,b,d)$
and $(b,c,d)$, $\tilde{\tau}=\{e_n\}_{n\in\mathbf{N}}$,
\begin{equation*}
\begin{split}
\Delta_{b,d}(e_n)=\sinh^2\Big{(}\frac{\delta_{b,c}}{2}\Big{)}\log\coth^2
\Big{(}\frac{\delta_{b,c}}{2}\Big{)}+
\cosh^2\Big{(}\frac{\delta_{a,d}}{2})
\log\coth^2 \Big{(}\frac{\delta_{a,d}}{2}\Big{)}-\\
\cosh^2\Big{(}\frac{\delta_{a,b}}{2}\Big{)}\log\coth^2
\Big{(}\frac{\delta_{a,b}}{2}\Big{)}-
\cosh^2\Big{(}\frac{\delta_{c,d}}{2}\Big{)}\log\coth^2
\Big{(}\frac{\delta_{c,d}}{2}\Big{)}
\end{split}
\end{equation*}
and $\delta_{b,c}$ is the distance between between the geodesic
$e_n$ and the geodesic $(b,c)$ with analogous definition in other
cases $\delta_{a,d},\delta_{a,b},\delta_{c,d}$. If $e_n$ has at
least one endpoint in $(a,b,c,d)$ then $\Delta_{b,d}(e_n)$ is given
appropriate interpretation as in Section 6.

Moreover, the Weil-Petersson metric pairing of the two tangent
vectors $v_1,v_2$ to $T(S)$ represented by two infinitesimal shear
functions $\dot{s}_1,\dot{s}_2:\tau\to\R$ is given by
$$
g_{WP}(v_1,v_2)=2i(\dot{s}_1,H(\dot{s}_2))
$$
where $H(\dot{s}_2)$ is the projection of $H(\dot{\tilde{s}}_2)$
onto $\tau$ and $i(\cdot ,\cdot )$ is Thurston's algebraic
intersection number.}

\vskip .2 cm

We point out that the expression in the above theorem which
represents the almost complex structure on the tangent space in
terms of infinitesimal shear functions is given by a single sum
unlike for arbitrary Zygmund bounded vector fields where the sum is
double. This feature appears because the infinitesimal shear
functions are invariant under the action of a cofinite Fuchsian
group $G$. Finally, we use the above expression of the almost
complex structure in terms of shear coordinates to conclude that the
almost complex structure on $T(S)$ extend continuously to the
boundary Teichm\"uller spaces of $T(S)$ obtained by pinching some
closed curves on $S$ (see Section 10). Together with the result of
J. Roger \cite{Ro} which establishes that the Thurston's algebraic
intersection number extend by continuity to the boundary of $T(S)$,
we obtain the well-known result of Masur \cite{mas} that the
Weil-Petersson metric extends by continuity to the boundary of
$T(S)$. We also point out that G. Riera \cite{Rier} obtained a
formula for the Weil-Petersson metric on Teichm\"uller spaces of
closed surfaces in terms of hyperbolic invariants.

\section{Zygmund vector fields and the tangent space to $T(\H )$}

Let $\H =\{Im(z)>0\}$ be the upper half-plane model of the
hyperbolic plane with the hyperbolic metric given by $\rho
(z)=\frac{|dz|}{Im(z)}$. Then $\hat{\R}=\R\cup\{ \infty\}$ is the
ideal boundary of $\H$ and $\hat{\R}$ is homeomorphic to the unit
circle $S^1$. A continuous map
$$V:\R\to \R$$ such that
\begin{equation}
\label{eq:V(x)_at_infinity}
\lim_{x\to\infty}\frac{V(x)}{x^2}=\lim_{x\to
-\infty}\frac{V(x)}{x^2}<\infty
\end{equation}
represents a {\it (continuous) vector field on} $\hat{\R}$. $\R$ is
a chart of $\hat{\R}$ and the condition (\ref{eq:V(x)_at_infinity})
guarantees that $V$ extends by continuity to a well-defined vector
field at $\infty\in\hat{\R}$.

A vector field $V:\R\to\R$ on $\hat{\R}$ is said to be {\it Zygmund}
if there exists $C>0$ such that both $V(x)$ and $x^2V(1/x)$ satisfy
\begin{equation}
\label{eq:Zygmund_vector_field} |V(x+t)+V(x-t)-2V(x)|\leq Ct
\end{equation}
for $x\in [-2,2]$ and $t\in [0,1]$. If $V$ is Zygmund, then $V$ can
be normalized by adding a quadratic polynomial in $x$ such that
$V(0)=V(1)=0$ and $V(x)=O(x\log |x|)$ as $|x|\to\infty$ (see
\cite{GL}). We are mainly interested in Zygmund vector fields on
$\hat{\R}$. A quadratic polynomial in $x$ describes an infinitesimal
change in $PSL_2(\R )$ which is a trivial change in our context of
the universal Teichm\"uller space $T(\H )$. Alternatively, a vector
field $V:\R\to\R$ on $\hat{\R}$ is Zygmund if it satisfies
(\ref{eq:Zygmund_vector_field}) for all $x\in\R$ and $t>0$. The {\it
Zygmund norm} of a Zygmund vector field $V:\R\to\R$ is the smallest
constant $C$ for which (\ref{eq:Zygmund_vector_field}) remains in
force.

Let $a,b,c,d\in\hat{\R}$ be four distinct points. The {\it
cross-ratio} $cr(a,b,c,d)$ is defined by
$$
cr(a,b,c,d)=\frac{(c-b)(d-a)}{(b-a)(d-c)}.
$$
The {\it cross-ratio norm} $\| V\|_{cr}$ of a vector field
$V:\R\to\R$ on $\hat{\R}$ is defined by
$$
\|
V\|_{cr}=\sup_{cr(a,b,c,d)=1}\Big{|}\frac{V(c)-V(b)}{c-b}+\frac{V(d)-V(a)}{d-a}
-\frac{V(b)-V(a)}{b-a}-\frac{V(d)-V(c)}{d-c}\Big{|}.
$$

A sequence $\{ V_n\}_{n\in\mathbf{N}}$ of Zygmund vector fields
converges to a Zygmund vector field $V$ in the topology of the
Zygmund norm (the cross-ratio norm) if and only if the Zygmund norm
(the cross-ratio norm) of $V-V_n$ goes to zero as $n\to\infty$. The
convergence in the Zygmund norm is equivalent to the convergence in
the cross-ratio norm \cite{Hu}, \cite{GL}.

Gardiner and Sullivan \cite{GS}, and Riemann \cite{Riem} showed that
a normalized vector field $V:\R\to\R$ is Zygmund if and only if
there exists a differentiable path of quasisymmetric maps
$$h_t:\hat{\R}\to \hat{\R}$$ for $|t|<\epsilon$ and $\epsilon >0$
such that $h_t:0,1,\infty \mapsto 0,1,\infty$ and
$$\frac{d}{dt}h_t|_{t=0}=V$$ on $\R$.

We recall that the universal Teichm\"uller space $T(\H )$ consists
of all quasisymmetric maps $h:\hat{\R}\to\hat{\R}$ such that
$h:0,1,\infty\mapsto 0,1,\infty$. Thus the space $\mathcal{Z}$ of
normalized Zygmund maps is identified with the tangent space
$T_{id}T(\H )$ of the universal Teichm\"uller space $T(\H )$ at the
identity ({\it basepoint}) $id\in T(\H )$.

\section{Complex structure on $T(\H )$ and Hilbert transform on Zygmund vector fields}

Let $V:\R\to\R$ be a normalized Zygmund vector field $V$ on
$\hat{\R}$, namely $V$ is a tangent vector to $T(\H )$ at the
basepoint $id\in T(\H )$. Let $\mu\in L^{\infty}(\H )$ be a Beltrami
differential representing the tangent vector $V\in T_{id}T(\H )$.
Denote by $[\mu ]$ the equivalence class of Beltrami differentials
representing the same tangent vector as $\mu$. We have \cite{A},
\cite{GL} a standard formula
$$
V(x)=-\frac{2}{\pi}Re\int_{\H}R(x,\zeta )\mu (\zeta )d\xi d\eta ,
$$
where $R(x,\zeta )=\frac{x(x-1)}{\zeta (\zeta -1)(\zeta -x)}$,
$\zeta =\xi +i\eta\in\H$ and $x\in\R$.

The complex structure operator $J:T_{id}T(\H )\to T_{id}T(\H )$ is
given by $J([\mu ])=[i\mu]$, where $[\mu ]$ is the tangent vector
represented by $\mu\in L^{\infty}(\H )$ and $i\in\mathbf{C}$ is the
imaginary unit. The above formula gives that the complex structure
on $V$ is
$$
J(V)(x)=-\frac{2}{\pi}Re\int_{\H}R(x,\zeta )i\mu (\zeta )d\xi d\eta
$$
or, equivalently
$$
J(V)(x)=\frac{2}{\pi}Im\int_{\H}R(x,\zeta )\mu (\zeta )d\xi d\eta .
$$

Let $\tilde{V}(z)$ for $z\in \H$ be the Beurling-Ahlfors extension
of $V$ into $\H$ (see \cite{GS}, \cite[pages 316,317]{GL}). Then we
have that $\bar{\partial}\tilde{V}(z)=\mu (z)$ a.e. in $\H$ (see
\cite{GL}). By Stoke's theorem, we get
$$
\frac{2i}{\pi}\int_{\H}R(x,\zeta )\mu(\zeta )d\xi d\eta=p.v.
\frac{1}{\pi}\int_{\R} R(x,\xi )V(\xi )d\xi -iV(x).
$$

By taking the real parts in the above equality, we obtain that the
complex structure $J$ on $V$ is
$$
J(V)(x)=-\frac{1}{\pi} p.v. \int_{\R} R(x,\xi )V(\xi )d\xi
$$
for $x\in\R$ which equals the {\it Hilbert transform} of $V$
\begin{equation}
\label{eq:Hilbert_transform} H(V)(x)=-\frac{1}{\pi} p.v. \int_{\R}
R(x,\xi )V(\xi )d\xi
\end{equation}

Kerckhoff was first to observe that the Hilbert transform gives the
almost-complex structure operator on $T_{id}T(\H )$ and the proof of
this statement first appears in \cite{NV}. We used the discussion in
\cite{Ga}. The Hilbert transform preserves the space of normalized
Zygmund vector fields on $\hat{\R}$ (see \cite{Zyg}, \cite{Ga}).

Let $V_{(0,\infty )}:\R\to\R$ be the {\it elementary shear vector
field} for the geodesic $(0,\infty )$ defined by
\begin{equation}\label{eq:simple_shear_vector_field}
V_{(0,\infty)}(x)=\left\{
\begin{array}l
x,\ \ \ \mbox{for}\ x>0\\
0,\ \ \ \mbox{for}\ x\leq 0
\end{array}
\right.
\end{equation}
Then the Hilbert transform of $V_{(0,\infty )}$ is given by
\begin{equation}
\label{eq:simple_shear_Hilbert}
HV_{(0,\infty
)}(x)=\frac{1}{\pi}x\log |x|
\end{equation}
for all $x\in\R$.

Some more elementary integration gives
\begin{equation}
\label{eq:simple_shear_Hilbert1} \begin{split} H(V_{(a,\infty
)})(x)=\frac{1}{\pi}(x-a)\log |x-a|+ \frac{(a-1)\log |a-1|}{\pi}x-\\
-\frac{a\log a}{\pi}(x-1)\ \ \ \ \ \ \ \ \ \ \ \ \ \ \ \ \ \ \ \ \ \
\ \ \ \ \ \ \ \ \ \ \ \ \ \
\end{split}
\end{equation}
for all $x\in\R$, where $a\in\R$, $a>0$, and the {\it elementary
shear vector field} $V_{(a,\infty )}$ for the geodesic  $(a,\infty
)$ is defined by
\begin{equation*}
V_{(a,\infty)}(x)=\left\{
\begin{array}l
x-a,\ \ \mbox{for}\ x>a\\
0,\ \ \ \ \ \ \ \ \mbox{for}\ x\leq a
\end{array}
\right.
\end{equation*}

For $a\in\R$, $a<0$, the {\it elementary shear vector field} for the
geodesic $(-\infty ,a)$ is defined by
\begin{equation*}
V_{(-\infty ,a)}(x)=\left\{
\begin{array}l
-(x-a),\ \mbox{for}\ x<a\\
0,\ \ \ \ \ \ \ \ \ \ \ \mbox{for}\ x\geq a
\end{array}
\right.
\end{equation*}

Then \begin{equation} \label{eq:simple_shear_Hilbert2} \begin{split}
H(V_{(-\infty ,a
)})(x)=-\frac{1}{\pi}(x-a)\log |x-a|- \frac{(a-1)\log |a-1|}{\pi}x+\\
+\frac{a\log |a|}{\pi}(x-1)\ \ \ \ \ \ \ \ \ \ \ \ \ \ \ \ \ \ \ \ \
\ \ \ \ \ \ \ \ \ \ \ \ \ \ \
\end{split}
\end{equation}

Let $a,b\in\R$ and $a<b$. For the {\it elementary shear vector
field}
\begin{equation*}
V_{(a,b)}(x)=\left\{
\begin{array}l
\frac{(x-a)(x-b)}{a-b},\ \mbox{for}\ a<x<b\\
0,\ \ \ \ \ \ \ \ \ \mbox{otherwise}
\end{array}
\right.
\end{equation*}
we obtain the Hilbert transform
\begin{equation}
\label{eq:simple_shear_Hilbert3}
\begin{split}
H(V_{(a,b)})(x)=\frac{1}{\pi}\frac{(x-a)(x-b)}{a-b}\log
\Big{|}\frac{x-b}{x-a}\Big{|}-\ \ \ \ \ \ \ \ \ \ \ \ \ \ \ \\
-x\frac{(1-a)(1-b)}{\pi (a-b)}\log \Big{|}
\frac{b-1}{a-1}\Big{|}+(x-1)\frac{ab}{\pi
(a-b)}\log\Big{|}\frac{b}{a}\Big{|}
\end{split}
\end{equation}
for all $x\in\R$.

\section{Shear parametrization of Zygmund maps}

Let $V:\R\to\R$ be a normalized Zygmund vector field. Namely $V(x)$
satisfies (\ref{eq:Zygmund_vector_field}) and it vanishes at $0$,
$1$ and $\infty$, where vanishing at $\infty$ means that the rate of
growth of $|V(x)|$ is $|x|\log |x|$ as $x\to\pm\infty$ (see
\cite{GL}). Let $h_t:\hat{\R}\to\hat{\R}$, $|t|<\epsilon$, be a path
of quasisymmetric maps such that $h_0=id$, $h_t$ fixes $0$, $1$ and
$\infty$ for each $t$, $|t|<\epsilon$, and
$$
\frac{d}{dt}h_t(x)|_{t=0}=V(x)
$$
for all $x\in\R$. In addition, we assume that the quasisymmetric
constants of $h_t$ are bounded by $1+Ct$ for some constant $C$ and
for each $t$, $|t|<\epsilon$ (see \cite{GS}, \cite{GL}).

Given two ideal hyperbolic triangles $\Delta_1$ and $\Delta_2$ with
common boundary side $e$ and disjoint interiors, the shear of
$\Delta_1$ and $\Delta_2$ is the signed hyperbolic distance between
the projections of third vertices of $\Delta_2$ and $\Delta_1$ onto
$e$ with the orientation of $e$ as a part of the boundary of
$\Delta_1$. A homeomorphism $h:\hat{\R}\to\hat{\R}$ maps the Farey
tesselation $\F$ onto a tesselation $h(\F )$. Given $e\in\F$, let
$\Delta_1^e$ and $\Delta_2^e$ be the two complementary triangles of
$h(\F )$ with common boundary side $h(e)$. We define a shear
function $s:\F\to\R$ of $h$ by setting $s(e)$ to be equal to the
shear of the triangles $\Delta_1^e$ and $\Delta_2^e$.

Let $s_t:\F\to\R$ be the path of shear functions corresponding to
the path $h_t$ of quasisymmetric maps. It follows that
$$
\dot{s}(e):=\frac{d}{dt}s_t(e)|_{t=0}
$$
is well-defined for each $e\in\F$ by the differentiability of $h_t$.
The function $$\dot{s}:\F\to\R$$ associated to $V$ is called the
{\it infinitesimal shear function} of $V$ and it uniquely determines
$V$ when $V$ is normalized to vanish at $0$, $1$ and $\infty$.

A {\it fan of geodesics} in $\F$ with {\it tip}
$p\in\hat{\mathbf{Q}}$ consists of all geodesics of $\F$ with a
common endpoint $p$. We orient each geodesic of the fan with tip $p$
such that $p$ is its initial point. Then for two geodesics $e$ and
$e'$ of the fan with tip $p$ we define $e<e'$ if $e'$ is to the left
of $e$ and we define $e'<e$, otherwise. This induces a one-to-one
correspondence of the fan $\{ e_n\}_{n\in\Z}$ with integers $\Z$
such that $e_n$ is adjacent to $e_{n+1}$ and $e_n<e_{n+1}$ for all
$n\in\Z$. Under the above assumptions on $h_t$, there exists $C>0$
such that
\begin{equation}
\label{eq:quasisymmetry-shear} \frac{1}{1+Ct}\leq e^{s_t(m)}
\frac{1+e^{s_t(m+1)}+\cdots +e^{s_t(m+1)+\cdots
+s_t(m+k)}}{1+e^{-s_t(m-1)}+\cdots +e^{-s_t(m-1)-\cdots
-s_t(m-k)}}\leq 1+Ct
\end{equation}
for all $m,k\in\Z$, where $s_t(m):=s_t(e_m)$ (see \cite{Sa2}). We
have the expansion
$$
s_t(m)=\dot{s}(m)t+o_m(t)
$$
where $o_m(t)/t\to 0$ as $t\to 0$ and constant in $o_m(t)$ depends
on $m$. From (\ref{eq:quasisymmetry-shear}) and the above expansion,
we obtain
\begin{equation}
\begin{split}
\label{eq:zygmund-condition-shears}
\Big{|}\dot{s}(m)+\frac{k-1}{k}[\dot{s}(m+1)+\dot{s}(m-1)]+
\frac{k-2}{k}[\dot{s}(m+2) +\\ +\dot{s}(m-2)]+  \cdots
+\frac{1}{k}[\dot{s}(m+k)+\dot{s}(m-k)]\Big{|}\leq C
\end{split}
\end{equation}
for all $m,k\in\Z$, for all fans $\{ e_n\}_{n\in\Z}$ of $\F$ and for
the fixed constant $C>0$ from the above. Thus
$(\ref{eq:zygmund-condition-shears})$ is a necessary condition for
$\dot{s}:\F\to\R$ to be the infinitesimal shear function of a
Zygmund map $V$. We show that (\ref{eq:zygmund-condition-shears}) is
also a sufficient condition. Thus the space $\mathcal{Z}$ of all
Zygmund vector fields normalized to vanish at $0$, $1$ and $\infty$
is parameterized by the set of functions $\dot{s}:\F\to\R$ which
satisfy (\ref{eq:zygmund-condition-shears}) in all fans for some
constant $C>0$ (which is the same for all fans).

\begin{theorem}
\label{thm:zygmund-parametrization-shears} Let $\dot{s}:\F\to\R$ be
an arbitrary function. Then $\dot{s}$ is the infinitesimal shear
function of a normalized Zygmund vector field $V:\R\to\R$ if and
only if $\dot{s}$ satisfies (\ref{eq:zygmund-condition-shears}) for
all fans in $\F$ with a fixed constant $C$.
\end{theorem}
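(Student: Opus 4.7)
The necessity direction is essentially done in the text preceding the theorem: starting from the quasisymmetric fan characterization (\ref{eq:quasisymmetry-shear}) applied to $s_t(e)=t\dot s(e)+o_e(t)$, one expands each exponential to first order in $t$, divides by $t$, and lets $t\to 0$ to obtain (\ref{eq:zygmund-condition-shears}). The one point needing attention is that the error terms $o_e(t)$ are uniform along a fan; this follows from the uniform quasisymmetry bound $1+Ct$ on the defining path $h_t$. I would just record this calculation cleanly.

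For sufficiency the plan is to follow the fan decomposition announced in the introduction: produce $V$ as a formal sum $V=\sum_{p\in\F^0}V_p$ where $V_p$ is a fan vector field whose shear function is $\tfrac12\dot s$ on $\F_p$ and $0$ on the remaining edges of $\F$. Concretely, $V_p$ is built from the elementary shear vector fields of (\ref{eq:simple_shear_vector_field})--(\ref{eq:simple_shear_Hilbert3}) as
\[
V_p=\tfrac12\sum_{n\in\Z}\dot s(e_n^p)\,V_{e_n^p}+A_p,
\]
where $A_p$ is the affine correction, together with a symmetric partial-sum convention, needed to force $V_p(0)=V_p(1)=0$ and to make the series converge; since the $\dot s(e_n^p)$ are only bounded and not summable, this grouping has to be done carefully. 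By construction the shear function of $V_p$ is $\tfrac12\dot s$ on $\F_p$ and $0$ elsewhere, and because every edge of $\F$ lies in exactly two fans (one at each endpoint), the shear function of $V$ is $\tfrac12\dot s+\tfrac12\dot s=\dot s$. Since a normalized vector field is determined by its shear function, it then suffices to prove that each $V_p$ is Zygmund with norm controlled by $C$ and that the series $\sum_pV_p$ converges in the cross-ratio norm.

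The heart of the matter, and what I expect to be the \textbf{main obstacle}, is the Zygmund estimate for an individual fan. Using the simple transitivity of $PSL_2(\Z)$ on oriented edges of $\F$ together with the M\"obius invariance of shear functions, I reduce to the model fan at $\infty$ with $a_n=n\in\Z$. There $V_p$ is piecewise linear with slope jump $\tfrac12\dot s(e_n^\infty)$ at each integer, and a direct second-difference computation gives
\[
V_p(x+t)+V_p(x-t)-2V_p(x)=\tfrac12\sum_{|n-x|<t}\bigl(t-|n-x|\bigr)\,\dot s(e_n^\infty).
\]
Choosing $t$ so that the sum ranges symmetrically over $2k+1$ consecutive integers centered at the integer $m$ closest to $x$, the coefficients of the $\dot s(e_{m\pm j}^\infty)$ reproduce, up to the overall factor $t$, precisely the C\'esaro-type weights appearing on the left of (\ref{eq:zygmund-condition-shears}), so the hypothesis bounds the whole expression by $C'\cdot t$. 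General $x$ and general $t$ are handled by telescoping between adjacent dyadic scales, and the Zygmund condition for $x^2V_p(1/x)$ near the second chart point is obtained by applying the same argument in the fan at $0$ (using the elementary shears (\ref{eq:simple_shear_Hilbert1})--(\ref{eq:simple_shear_Hilbert2})).

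Finally, for the assembly $V=\sum_p V_p$ I would directly establish absolute uniform convergence of the partial sums in the cross-ratio norm: partition $\F^0=\hat\Q$ by combinatorial distance from a fixed base triangle of $\F$, so that all fans at a given depth are $PSL_2(\Z)$-equivalent and subject to the uniform Zygmund bound proved in the previous paragraph, and then exploit the geometric decay of cross-ratio contributions from far-away fans. The limit $V$ is Zygmund with norm $\lesssim C$, vanishes at $0,1,\infty$ by construction, and has shear function $\dot s$, finishing the sufficiency direction.
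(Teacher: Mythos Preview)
Your treatment of necessity and of the single-fan Zygmund bound is correct and matches the paper (the second-difference formula for the piecewise-linear model at $\infty$ is exactly the content of Proposition~\ref{prop:claim1}, and the C\'esaro weights you identify are precisely those in (\ref{eq:zygmund-condition-shears})).

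The gap is in the assembly step. The series $\sum_pV_p$ does \emph{not} converge in the cross-ratio norm, so the phrase ``absolute uniform convergence of the partial sums in the cross-ratio norm'' and the appeal to ``geometric decay of cross-ratio contributions from far-away fans'' cannot be made to work. The cross-ratio norm is a supremum over \emph{all} quadruples $Q$ with $cr(Q)=1$, and such a $Q$ can be chosen arbitrarily deep in the Farey tesselation; for a deep $Q$, the only fans whose support meets $Q$ are precisely the fans of high Farey order that you have pushed into the tail. By the $PSL_2(\Z)$-equivariance you yourself invoke, the tail $\sum_{\mathrm{order}(p)>n}V_p$ tested on a suitably deep quadruple has the same cross-ratio response as the full sum tested on a quadruple near $\Delta_0$, so $\bigl\|\sum_{\mathrm{order}(p)>n}V_p\bigr\|_{cr}$ is bounded below uniformly in $n$ and does not tend to zero. (Equivalently: the partial-sum shear functions, which equal $\dot s$, $\tfrac12\dot s$, or $0$ on various edges, need not themselves satisfy (\ref{eq:zygmund-condition-shears}) with a constant independent of the cutoff, so you cannot bootstrap from the fan bound.)

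The paper resolves this by abandoning norm convergence. First it proves only absolute, locally uniform \emph{pointwise} convergence of $\sum_pV_p$ (Proposition~\ref{prop:claim2}), using that for a fixed $x$ the supports of the contributing $V_p$ shrink like $e^{-n\delta/2}$. The Zygmund bound on the limit $V$ is then obtained by a separate, indirect compactness argument (Proposition~\ref{prop:Zyg_bdd_uniform_in_shears}): assuming a sequence $\dot t_n$ all satisfying (\ref{eq:zygmund-condition-shears}) with the same $C$ but with $\|W_n\|_{cr}\to\infty$, one renormalizes the offending quadruples to a fixed $Q^*$ by M\"obius maps $B_n$, passes to a Hausdorff limit of the triangulations $B_n(\F)$, and derives a contradiction in each of the two possible cases (the limit is an ideal triangulation, or it degenerates to a geodesic foliation). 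This compactness step is the missing idea in your plan; some argument of this type is genuinely needed, since a direct summation in norm is not available.
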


\begin{proof}
We proved above that (\ref{eq:zygmund-condition-shears}) is a
necessary condition for $V$ to be a Zygmund vector field. It remains
to prove that (\ref{eq:zygmund-condition-shears}) is also a
sufficient condition for $\dot{s}$ to be induced by a Zygmund vector
field $V:\R\to\R$. We first show that $\dot{s}$ is induced by a
continuous vector field $V:\R\to\R$.

\vskip .2 cm

Let $\Delta_0$ be a hyperbolic triangle in the upper half-plane $\H$
with vertices $0$, $1$ and $\infty$. We orient each edge in $\F$ to
the left as seen from $\Delta_0$. Consider a fan of geodesics
$\F_p=\{ e_n^p\}_{n\in\Z}$ with tip $p\in\hat{\Q}$. The geodesics of
the fan $\F_p$ are divided into two groups with respect to the
orientation given by $\Delta_0$: the geodesics whose initial point
is $p$ and the geodesics whose terminal point is $p$. We enumerate
the fan with tip $p$ by $\{ e_n^p\}_{n\in\Z}$ such that $e_n^p$ is
adjacent to $e_{n+1}^p$, and that $e_0^p$ has initial point $p$ and
$e_1^p$ has terminal point $p$. This enumeration is unique. In the
case $p=\infty$, we have that $e_n^{\infty}$ is a geodesic with
endpoints $n$ and $\infty$.

Assume that function
$$
\dot{s}:\Z\to\R
$$
satisfies (\ref{eq:zygmund-condition-shears}). We define
\begin{equation}
\label{eq:fan-zygmund} V_{\dot{s}}^{\infty}(x)=\left\{
\begin{array}l
0\ \ \ \ \ \ \ \ \ \ \ \ \ \ \ \ \ \ \ \ \ \ \ \ \ \ \ \ \ \ \ \ \ \ \ \ \ \ \ \ ; \ x\in [0,1]\\
{\dot{s}}(1)(x-1)+\cdots +{\dot{s}}(n)(x-n)\ ;\ x\in
(n,n+1] \\
-{\dot{s}}(0)x-\cdots -{\dot{s}}(-n)(x+n) \ \ \ \ ;\ x\in [-n-1,-n)
\end{array}\right .
\end{equation}
for $n\in\N$.

\begin{proposition} \label{prop:claim1} Under the above notation,
the vector field $V_{\dot{s}}^{\infty}:\R\to\R$ is Zygmund with the
Zygmund norm $2C+18\|\dot{s}\|_{\infty}\leq 20C$, where $C$ is given
in (\ref{eq:zygmund-condition-shears}).
\end{proposition}

\begin{proof} It is clear that $V_{\dot{s}}^{\infty}$ is continuous. We
consider $x\in\R$ and $t\geq 2$. Let $m\in\Z$ and $k\in\N$ be such
that $m\leq x\leq m+1$ and $m+k\leq x+t\leq m+k+1$. Then we have
$k-1\leq t\leq k+1$ and $m-k-1\leq x-t\leq m-k+2$. A simple
estimation shows that
\begin{equation*}
\begin{split}
|V_{\dot{s}}^{\infty}(x+t)+V_{\dot{s}}^{\infty}(x-t)-2V_{\dot{s}}^{\infty}(x)|\leq
|V_{\dot{s}}^{\infty}(m+k) + V_{\dot{s}}^{\infty}(m-k)-\\
-2V_{\dot{s}}^{\infty}(m)|
+6\|{\dot{s}}\|_{\infty}k+3\|\dot{s}\|\leq C_1t,
\end{split}
\end{equation*}
where $C_1=2C+18\|{\dot{s}}\|_{\infty}$ because $k\geq 1$ implies
$2t\geq k$ and condition (\ref{eq:zygmund-condition-shears}) is
equivalent to $|V_{\dot{s}}^{\infty}(m+k) +
V_{\dot{s}}^{\infty}(m-k) -2V_{\dot{s}}^{\infty}(m)|\leq Ck$.

For $0<t<2$, we have that $V_{\dot{s}}^{\infty}$ is piecewise linear
on each interval $[x-t,x+t]$ with at most three points where it is
not differentiable and with the bound $\|{\dot{s}}\|_{\infty}$ on
the difference between the two adjacent slopes. Thus
$|V_{\dot{s}}^{\infty}(x+t)+V_{\dot{s}}^{\infty}(x-t)-2V_{\dot{s}}^{\infty}(x)|\leq
C_1t$ in this case as well. Thus $V_{\dot{s}}^{\infty}$ is Zygmund
bounded with the Zygmund norm $C_1$.
\end{proof}

\vskip .2 cm

\noindent {\it Continuation of the proof of Theorem
\ref{thm:zygmund-parametrization-shears}.} We introduce an order on
$\hat{\Q}$ using the Farey tesselation. Namely, $0$ and $\infty$ in
$\hat{\Q}$ are said to be of order $1$. The numbers $1$ and $-1$ are
said to be of order $2$. If the maximum of the orders of $a/b$ and
$c/d$ is $n$ and they are adjacent on $\hat{\R}$ (when we consider
only numbers in $\hat{\Q}$ of order at most $n$), then the mediant
$(a+c)/(b+d)$ is of the order $n+1$. In this fashion, we obtain the
whole $\hat{Q}$. There are two order $1$ and two order $2$ numbers
in $\hat{Q}$. For $n\geq 2$, there are $2^{n-1}$ numbers of order
$n$.

The reference triangle $\Delta_0$ has vertices $0$, $1$ and
$\infty$. Fix $p\in\hat{\Q}$ and let $\F_p=\{ e_n^p\}_{n\in\Z}$ be
the fan with tip $p$. Recall that $e_n^p$'s are oriented to the left
as seen from $\Delta_0$, and that $e_0^p$ has the initial point $p$
and $e_1^p$ has the terminal point $p$. Let $a$ be the terminal
endpoint of $e_0^p$ and let $B\in PSL_2(\Z )$ be the unique element
such that $B(0)=a$ and $B(\infty )=p$. Let $V_{\dot{s}_p}^{\infty}$
be given by formula (\ref{eq:fan-zygmund}) where
$$\dot{s}_p:\Z\to\R$$ is defined by
$$\dot{s}_p(n)=\frac{1}{2}\dot{s}(e^p_n)$$
for $n\in\Z$. (Note that we use $\frac{1}{2}\dot{s}$ because each
geodesic of $\F$ appears in two different fans.) Then
$V_{\dot{s}_p}^{\infty}$ is a Zygmund vector field with the Zygmund
constant $10C$  by Proposition \ref{prop:claim1}.

Define
$$
V_p:=B^{*}V_{\dot{s}_p}^{\infty}
$$
Then $V_p$ is a Zygmund vector field corresponding to the shear
function which agrees with $\frac{1}{2}\dot{s}$ at $\F_p=\{
e_n^p\}_{n\in\Z}$ and which is zero on $\F\setminus\F_p$. If $p\neq
0,1$ then $V_p$ is normalized to be zero on $(-\infty ,B(0))\cup
(B(1),+\infty )$ if $B(0)<B(1)$, otherwise $V_p$ is zero on
$(-\infty ,B(1))\cup (B(0),+\infty )$ if $B(1)<B(0)$. In both cases
$V_p$ vanishes at $\infty$. It is also true that $V_p$ vanishes at
$0$ and $1$ because geodesics in $\F_p$ do not separate $0$ and $1$
from $\Delta_0$. Thus $V_p$ is a normalized Zygmund vector field.

\vskip .2 cm

We define a formal series
$$
V(x):=\sum_{p\in\hat{\Q}}V_p(x)
$$
for $x\in\R$. It is clear that $V$ is well-defined on $\hat{\Q}$
because only finitely many terms $V_p(x)$ in the above series are
non-zero. (If we knew that $V$ is a tangent vector to some path of
homeomorphisms, then the corresponding infinitesimal shear function
would be equal to $\dot{s}:\F\to\R$. However, we still need to show
that $V$ is defined on all of $\R$ and that it is Zygmund.)

\begin{proposition}
\label{prop:claim2} The series
\begin{equation}
\label{eq:series_for_V(x)} V(x):=\sum_{p\in\hat{\Q}}V_p(x)
\end{equation}
converges absolutely and uniformly on compact subsets of $\R$.
\end{proposition}

\begin{proof} Let $x\in\R$. By Proposition \ref{prop:claim1} and the above,
the Zygmund norm of each $V_{\dot{s}_p}^{\infty}$ is bounded by
$C_1=10C$. This implies that the cross-ratio norm of each
$V_{\dot{s}_p}^{\infty}$ is bounded by $C_2=C_2(C_1)$ (see
\cite{GS}, \cite{GL}). Thus the Zygmund norm of $V_b$ is bounded by
$C_2$ because $V_p$ is normalized to vanish at $0$, $1$ and
$\infty$.

Let $r_x$ be the geodesic ray from the center of $\Delta_0$ to
$x\in\R$. Consider the set of fans whose geodesics intersect $r_x$.
If $r_x$ intersects $\F_p$ and the Farey order of $p$ is at least
$2$, then $r_x$ does not intersect any other fan with the same
order. On the other hand, $r_x$ intersects both $\F_{0}$ and
$\F_{\infty}$ if and only if $r_x$ intersects the geodesic with
endpoints $0$ and $\infty$. We order the set of all fans
intersecting $r_x$ by the Farey order of their tips. The only
ambiguity is if both $\F_{0}$ and $\F_{\infty}$ intersect $r_x$ in
which case we set $\F_{0}<\F_{\infty}$. This gives us a linear order
on the set of fans intersecting $r_x$ and we can put them in a
sequence $\{\F_{p_n}\}_{n\in\N}$ such that $\F_{p_n}<\F_{p_{n+1}}$.
Consider the ordered subset $\{ e_{i_1}^{p_n},\ldots
,e_{i_{k(n)}}^{p_n}\}$ of geodesics of $\F_{p_n}$ that intersect
$r_x$. Then either $i_1=0$ and $i_1>i_2>\cdots >i_k$, or $i_1=1$ and
$i_1<i_2<\cdots <i_k$.

Consider the fans $\F_{p_n}$ and $\F_{p_{n+2}}$. The last geodesic
$e^{p_n}_{i_{k(n)}}$ of the fan $\F_{p_n}$ which intersects $r_x$
does not share an endpoint with the first geodesic
$e^{p_{n+2}}_{i_{1}}$ of the fan $\F_{p_{n+2}}$ which intersects
$r_x$. Thus the distance between $e^{p_n}_{i_{k(n)}}$ and
$e^{p_{n+2}}_{i_{1}}$ is $\delta =\log (1+\sqrt{2})^2$.

Assume that $n\geq 3$. Then no geodesic of the fan $\F_{p_n}$ has
$\infty$ as an endpoint. Moreover, the Farey order of $p_n$ is at
least $3$. Let $e_{i_1}^{p_n}=(a_n,b_n)$ be the first geodesic of
the fan $\F_{p_n}$ that intersects $r_x$.   Note that
$V_{p_n}(a_n)=V_{p_n}(b_n)=0$ by the normalization of $V_{p_n}$. The
distance of the center of the triangle $\Delta_0$ to $e_{i_1}^{p_n}$
is at least $\frac{n-2}{2}\delta$ by the above considerations. An
elementary hyperbolic geometry implies that $|a_n-b_n|$ is of the
order $e^{-\frac{(n-2)\delta}{2}}$ for $n\in\N$, where the constant
depends on a compact subset of $\R$ in which $x$ lies. Since
$V_{p_n}$ has $\epsilon\log\frac{1}{\epsilon}$ modulus of continuity
with the constant depending on its Zygmund norm and since the
support of $V_{p_n}$ is the interval in $\R$ with endpoints $a_n$
and $b_n$, for $x$ in a compact subset of $\R$, we obtain
$$
\sum_{p\in\hat{\Q}}|V_p(x)|=\sum_{n\in\N}|V_{p_n}(x)|\leq
C_0+C_3\sum_{n\in\N} ne^{-\frac{n\delta}{2}}<\infty
$$
where $C_0=|V_{p_1}(x)|+|V_{p_2}(x)|+|V_{p_3}(x)|$, and
$C_3=C_3(C_2)$ is a constant which depends on the bound $C_2$ on the
Zygmund norm of $V_{p_n}$'s.
\end{proof}

\vskip .2 cm

\begin{remark}
\label{rmk:extension_to_infinity} We showed that the formal series
(\ref{eq:series_for_V(x)}) converges to a continuous vector field on
$\R$ which is normalized to vanish at $0$ and $1$ because each $V_p$
vanishes at $0$ and $1$. To see that $V(x)$ extends to a
(continuous) normalized vector field on $\hat{\R}$ we need to show
that $V(x)=o(x^2)$ as $x\to\pm\infty$. In the proof of Proposition
\ref{prop:claim2} we showed that $V(x)$ (for $x$ in a compact subset
of $\R$) is bounded in terms of the Zygmund bound on $V_p$'s. The
push-forward of the vector fields $V_p$ under the translation
$T:x\mapsto x+a$, $a\in\R$, satisfies $\| V_p\|_{\infty}=\|
T^{*}(V_p)\|_{\infty}$ because $T^{*}(V_p)(x)=V_p(x-a)$. Let
$\sum'V_{p_n}(x)$ be the sum (\ref{eq:series_for_V(x)}) without
terms $V_{p_1}$, $V_{p_2}$ and $V_{p_3}$. Note that each $V_{p_n}$
in the sum $\sum'V_{p_n}(x)$ does not have a positive shear on a
geodesic of $\F$ with one endpoint $\infty$. If $x\in [k,k+1]$ for
$k\in\Z$, define $T_k(x)=x-k$. Then $(T_k)^{*}(\sum'V_{p_n}(x))$ is
equal to $V_k'(x-k)$, where the infinitesimal shear function of the
vector field $V'_k$ is the push-forward by $T_k$ of infinitesimal
shear function $\dot{s}$. Since $V'_k$ is bounded on compact subsets
of $\R$ by Proposition \ref{prop:claim2}, since $x-k\in [0,1]$ and
since $\sum'V_{p_n}(x)=V_k'(x-k)$, it follows that $\sum'V_{p_n}(x)$
is bounded on $\R$. Since $V_{p_1}$, $V_{p_2}$ and $V_{p_3}$ are
normalized to vanish at $\infty$ and $\sum'V_{p_n}(x)$ is bounded on
$\R$, it follows that $V(x)=o(x^2)$ as $x\to\pm\infty$. Thus
(\ref{eq:series_for_V(x)}) defines a normalized vector field on
$\hat{\R}$.
\end{remark}

\begin{remark}
The sum (\ref{eq:series_for_V(x)}) defines a continuous vector field
$V:\R\to\R$ on $\hat{\R}$ which vanishes at $0$, $1$ and $\infty$.
This is achieved by normalizing each vector field $V_b$ in the same
fashion. To normalize $V(x)$ to vanish at the endpoints
$a_1,a_2,a_3$ of another complementary triangle $\Delta$ of $\F$, we
start the construction of each $V_p$ with respect to the triangle
$\Delta$. Equivalently, we need to add a quadratic polynomial to the
formula (\ref{eq:series_for_V(x)}) in order to obtain a vector field
with the same infinitesimal shear function $\dot{s}$ which vanishes
at $a_1,a_2,a_3$. The new vector field is of the order $O(x^2)$ as
$x\to\pm\infty$.
\end{remark}

\begin{remark}
\label{rmk:vect_field_nonFarey} Let $B\in PSL_2(\R )$ and let
$\F^{*}=B(\F )$. Then $\dot{s}\circ B^{-1}:\F^{*}\to\R$ is a shear
function on $\F^{*}$. Fix a complementary triangle $\Delta^{*}$ in
$\F^{*}$. Then there exists $B_1\in PSL_2(\R )$ such that
$B_1(\Delta_0)=\Delta^{*}$ and $B_1(\F )=\F^{*}$. We define
$$
W(x):=\sum_{p\in B_1(\hat{\Q})}W_p(x)
$$
where $W_p=(B_1)^{*}(V_{B_1^{-1}(p)})$. Then the series for $W(x)$
converges absolutely and uniformly on compact subsets of
$\R\setminus \{B_1(\infty )\}$ because each term is the push-forward
of the series (\ref{eq:series_for_V(x)}) by the same M\"obius map
$B_1\in PSL_2(\R )$. Vector field $W:\R\to\R$ is normalized to
vanish at the vertices of $\Delta^{*}$ and its infinitesimal shear
function is $\dot{s}\circ B_1^{-1}:\F^{*}\to\R$.
\end{remark}

\vskip .2 cm

\noindent {\it We continue the proof of Theorem
\ref{thm:zygmund-parametrization-shears}.} Proposition
\ref{prop:claim2} implies that $V(x)$ is a continuous vector field
on $\R$ because each $V_p(x)$ is a continuous vector field and the
convergence is uniform. We showed in Remark
\ref{rmk:extension_to_infinity} that $V(x)$ extends to a normalized
vector field on $\hat{\R}$. We need to show that $V(x)$ is a Zygmund
vector field. We show a slightly stronger statement which will be
used later on.

\begin{proposition}
\label{prop:Zyg_bdd_uniform_in_shears} Let $C>0$ be fixed. If
$$\dot{s}:\mathcal{F}\to\mathbf{R}$$ satisfies
(\ref{eq:zygmund-condition-shears}) with constant $C$, then there
exists $M=M(C)>0$ such that the vector field
$$V:\mathbf{R}\to\mathbf{R}$$ induced by $\dot{s}$ is $M(C)$-Zygmund
bounded.
\end{proposition}

\begin{proof}
Assume on the contrary that there exists a sequence
$\dot{t}_n:\mathcal{F}\to\mathbf{R}$ of infinitesimal shear
functions satisfying (\ref{eq:zygmund-condition-shears}) with the
fixed constant $C>0$ such that the cross-ratio norms of the
corresponding vector fields $W_n:\R\to\R$ converge to infinity as
$n\to\infty$. Then there exists a sequence $\{
Q_n=(a_n,b_n,c_n,d_n)\}$ of quadruples on $\hat{\R}$ with
$cr(Q_n)=\frac{(c_n-a_n)(d_n-b_n)}{(d_n-a_n)(c_n-b_n)}=2$ such that
$|W_n[Q_n]|\to\infty$ as $n\to\infty$. (If for a single shear
function $\dot{t}:\F\to\R$ the induced continuous vector field $W$
is not Zygmund, then there exists a sequence $Q_n$ as above such
that $|W[Q_n]|\to \infty$ as $n\to\infty$. We replace $W_n$ with $W$
and the argument below is the same.) We seek a contradiction.

Let $Q^{*}:=(-e^{\delta},-1,1,e^{\delta})$ with $\delta =\log
(1+\sqrt{2})^2$ and note that $cr(Q^{*})=2$. Then there exists a
unique M\"obius map $B_n$ such that $B_n(Q_n)=Q^{*}$. Define
$$
V_n:=(B_n)^{*}W_n .
$$
Then
$$
|V_n[Q^{*}]|\to\infty
$$
as $n\to\infty$.

Recall that $V_n(x)=\frac{1}{(B_n^{-1})'(x)}W_n(B_n^{-1}(x))$. Let
$\F_n=B_n(\F )$ and $\dot{s}_n:=\dot{t}_n\circ B_n^{-1}$. Then $V_n$
is a vector field corresponding to $\dot{s}_n$ whose support is an
ideal triangulation $\F_n$ and which is normalized to be zero at the
vertices of the complementary triangle $\Delta_n:=B_n(\Delta_0)$.
There are two possibilities to consider:

(a) $\F_n$ converges to an ideal triangulation $\F^{*}$ of $\H$, or

(b) $\F_n$ does not converge to an ideal triangulation of $\H$.

In both cases we are allowed to further normalize the sequence $V_n$
to vanish at any three points on $\hat{\R}$ by adding a quadratic
polynomial.

\vskip .2 cm

Assume we are in the case (a). Then the sequence $\dot{s}_n$ of
infinitesimal shear functions has a convergent subsequence in the
weak*-topology (if we consider $\dot{s}_n$ as functions on the space
of geodesics on $\H$). Without loss of generality, we assume that
$\dot{s}_n$ converges to $\dot{s}^{*}$ as $n\to\infty$. The support
of $\dot{s}^{*}$ is $\F^{*}$. Moreover, $\dot{s}^{*}$ satisfies
property (\ref{eq:zygmund-condition-shears}) in each fan of $\F^{*}$
because the values of $\dot{s}^{*}$ in each fan of $\F^{*}$ are
limits of the values of $\dot{s}_n$ on the fans of $\F_n$ (and all
$\dot{s}_n$ satisfy property (\ref{eq:zygmund-condition-shears})
with the same constant). Note that all shears of $\F^{*}$ are $0$
because it is the limit of $\F_n$ whose each shear is $0$. Thus
$\F^{*}$ is the image of the Farey tesselation under a M\"obius map
$B^{*}\in PSL_2(\R )$. By Remark \ref{rmk:vect_field_nonFarey}, the
infinitesimal shear function $\dot{s}^{*}$ determines a continuous
vector field $V^{*}:\R\to\R$. If the sequence $V_n$ pointwise
converges to $V^{*}$ then we get a contradiction with
$|V_n[Q^{*}]|\to\infty$ because $|V_n[Q^{*}]|\to
|V^{*}[Q^{*}]|<\infty$ as $n\to\infty$.

To obtain a contradiction in the case (a), it is enough to prove the
convergence of $V_n$ to $V^{*}$. By Remark
\ref{rmk:vect_field_nonFarey}, we have
$$
V^{*}(x)=\sum_{p\in \F^{*}_0}V_p^{*}(x)
$$
for $x\in \R$, where $\F^{*}_0$ is the set of vertices of $\F^{*}$.
The convergence is absolute and uniform on compact subsets of $\R$.
Recall that each $V_p^{*}$ is normalized to be zero on the vertices
of $\Delta_n:=B_n(\Delta_0)$. Let $\Delta^{*}$ be the limit of
$\Delta_n$. If the sequence $\Delta_n$ does not converge, then we
normalize $V_n$ to be zero on the vertices $\Delta_n'$ such that the
sequence $\Delta_n'$ converges. We denote the normalization of $V_n$
by $V_n$ again and $\Delta_n'$ by $\Delta_n$ for simplicity of
notation. Then we normalize $V^{*}(x)$ to be zero at the vertices of
the complementary triangle $\Delta^{*}$ of $\F^{*}$ by normalizing
each $V_p^{*}$ to be zero at these points.

Given a compact subset $K$ of $\R$ and an $\epsilon >0$, there
exists finitely many $p_1,p_2,\ldots ,p_k\in\F^{*}_0$ such that
\begin{equation}
\label{eq:approximation_V(x)}
|V^{*}(x)-\sum_{i=1}^kV_{p_i}^{*}(x)|<\epsilon
\end{equation}
for all $x\in K$. To see this, consider the sequence of all fans
$\F^{*}_{p_n}$ which intersect all rays $r_x$ from the center of
$\Delta^{*}$ to $x\in K$. Given $d>0$, there exists $k$ such that
the support of $V_{p_i}^{*}$ for $i>k$ has length smaller than $d$.
(This was established for the Farey tesselation $\F$ and the proof
is similar for $\F^{*}$.) The lefthand side of
(\ref{eq:approximation_V(x)}) is less than
\begin{equation}
\label{eq:sum_of_series} \sum_{i=k+1}^{\infty}|V_{p_i}^{*}(x)|\leq
C\sum_{i=k+1}^{\infty}ie^{-\frac{i\delta}{2}}
\end{equation}
where $C$ is given in the last paragraph in the proof of Proposition
\ref{prop:claim2}. The estimate (\ref{eq:approximation_V(x)})
follows for $k$ large enough.

For each fan $\F^{*}_{p_i}$, $i=1,2,\ldots ,k$, there exists a
sequence of fans $\{(\F_n)_{p_i^n}\}_{n\in\N}$ which converges to
$\F^{*}_{p_i}$ as $n\to\infty$ such that the functions
$\dot{s}_n|_{(\F_n)_{p_i^n}}$ converge in the weak* topology to
$\dot{s}_{\infty}|_{\F^{*}_{p_i}}$. We have
$$
|V_{n}(x)-\sum_{i=1}^k(V_n)_{p_i^n}(x)|<
\sum_{i=k+1}^{\infty}|(V_n)_{p_i^n}(x)|<
C_n\sum_{i=k+1}^{\infty}ie^{-\frac{i\delta}{2}}
$$
where $C_n$ is the constant corresponding to $V_n$ similar to the
above.

Note that $\{ C_n\}_{n\in\mathbf{N}}$ is a bounded sequence. Then
\begin{equation}
\label{eq:approximation_V_n(x)}
|V_n(x)-\sum_{i=1}^k(V_n)_{p_i^n}(x)|<\epsilon
\end{equation}
for all $x\in K$ and $n$ large enough. Thus $k$ can be chosen
simultaneously for $V^{*}$ and all $V_n$, $n\in\N$ depending only on
$\epsilon >0$.

By (\ref{eq:approximation_V(x)}) and
(\ref{eq:approximation_V_n(x)}), it is enough to prove that
$(V_n)_{p_i^n}(x)\to V_{p_i}^{*}(x)$ uniformly for $x$ in the
compact subset $K$ of $\R$ as $n\to\infty$. Note that
$(V_n)_{p_i^n}$ and $V_{p_i}^{*}$ are Zygmund vector fields with
uniformly bounded cross-ratio norms and the supports of
$(V_n)_{p_i^n}$ on $\R$ converge to the support of $V_{p_i}^{*}$.
Thus the uniform convergence of $(V_n)_{p_i^n}$ to $V_{p_i}^{*}$
follows from the pointwise convergence on a dense subset of the
compact subset $K$ of $\R$ because $\{ (V_n)_{p_i^n}\}$ is a normal
family. On the other hand, the weak* convergence of
$\dot{s}_n|_{(\F_n)_{p_i^n}}$ to $\dot{s}_{\infty}|_{\F^{*}_{p_i}}$
implies that $(V_n)_{p_i^n}$ converges pointwise to $V_{p_i}^{*}$,
when $(V_n)_{p_i^n}$ are normalized to vanish at the vertices of the
triangle $\Delta_n$ in $\F_n$ with $\Delta_n\to\Delta^{*}$ as
$n\to\infty$. Thus $V_n$ converges uniformly to $V^{*}$ on $K$ and
case (a) is finished.

\vskip .2 cm

It remains to consider the case (b). The sequence $\F_n$ has a
convergent subsequence in the Hausdorff topology on closed subset of
$\H$. Without loss of generality we assume that the whole sequence
$\F_n$ converges to a subset $\F^{*}$ of the space of geodesics of
$\H$. The limit $\F^{*}$ of $\F_n$ is a geodesic lamination of $\H$
because each $\F_n$ is a geodesic lamination. Moreover, $\F^{*}$ is
not empty because there is a bounded neighborhood of the imaginary
unit $i\in\H$ which has to intersect an edge of $\F_n$ (since
complementary triangles of $\F_n$ cover $\H$ and there is a positive
upper bound on the distance of points inside an ideal triangle to
its boundary). If $\F^{*}$ has a non-empty complement, then the
connected components of the complement consist of ideal triangles
because the connected components of the complement of $\F_n$ are
ideal triangles. However, if a connected component of the complement
of $\F^{*}$ is an ideal triangle then $\F^{*}$ contains all the
images of the triangle under the group of isometries of $\H$
generated by inversions in the sides of the triangle. This implies
that $\F^{*}$ is an ideal triangulation which contradicts the case
(b). Thus $\F^{*}$ is necessarily a geodesic foliation of $\H$.

Let $l$ be a leaf of the foliation $\F^{*}$ which contains $i\in\H$
and let $\Delta_n$ be a complementary triangle of $\F_n$ which
contains $i\in\H$. Two boundary sides of $\Delta_n$ converge to $l$
as $n\to\infty$. Let $p_n\in\hat{\R}$ be the common endpoint of the
above two boundary sides. Recall that
$$
V_n:=(B_n)^{*}(W_n)
$$
with
$$
|V_n[Q^{*}]|\to\infty
$$
as $n\to\infty$. By Proposition \ref{prop:claim2} and Remark
\ref{rmk:vect_field_nonFarey}, we have
$$
V_n(x)=(V_n)_{p_n}(x)+\sum_{p'\in (B_n(\hat{Q})\setminus \{
p_n\})}(V_n)_{p'}(x).
$$
Note that $V_n$ vanishes at vertices of $B_n(\Delta_0)$, where
$\Delta_0$ is the ideal triangle with vertices $0$, $1$ and
$\infty$. We normalize $V_n$ by adding a quadratic polynomial such
that it vanishes at the vertices of $\Delta_n$, where two boundary
sides of $\Delta_n$ converge to $l\in \F^{*}$. We denote by $V_n'$
the normalized vector field. Then we have
$$
V_n'(x)=(V_n)_{p_n}'(x)+\sum_{p'\in (B_n(\hat{Q})\setminus \{
p_n\})}(V_n)_{p'}'(x)
$$
where each $(V_n)_{p'}'$ is defined to vanish at the vertices of the
triangle $\Delta_n$. Note that $\dot{s}_n=\dot{t}_n\circ B_n^{-1}$
is the infinitesimal shear function of $V_n'$ and $V_n$. Since the
cross-ratio norm is invariant under the addition of a quadratic
polynomial, we have
$$
|V_n'[Q^{*}]|\to\infty
$$
as $n\to\infty$.

Since the reference triangle $\Delta_n$ converges to a single
geodesic $l$, it follows that all geodesics of $\F_n$ except the
geodesics in the fan with tip $p_n$ have the supremum of their
Euclidean sizes going to zero as $n\to\infty$. Then, similar to the
proof of Proposition \ref{prop:claim2}, we have
$$
\sum_{p'\in (B_n(\hat{Q})\setminus \{ p_n\})}|(V_n)_{p'}'(x)|\leq
C\sum_{i=1}^{\infty}(a(n)+i)e^\frac{-(a(n)+i)\delta}{2}
$$
with $a(n)\to\infty$ as $n\to\infty$. Thus $\sum_{p'\in
(B_n(\hat{Q})\setminus \{ p_n\})}|(V_n)_{p'}'(x)|$ is arbitrary
small when $n$ is large. By Proposition \ref{prop:claim1},
$(V_n)_{p_n}'(x)$ are Zygmund bounded with the cross-ratio norms
independent of $n$. We further normalize $(V_n)_{p_n}'(x)$ by adding
a quadratic polynomial $q_n(x)$ such that
$(V_n)_{p_n}''(x):=(V_n)_{p_n}'(x)+q_n(x)$ vanishes at $0$, $1$ and
$\infty$. It follows that $\{ (V_n)_{p_n}''\}_{n\in\N}$ is a normal
family. Thus there exists a subsequence of $\{
(V_n)_{p_n}''\}_{n\in\N}$ which converges uniformly. Without loss of
generality, we assume that $(V_n)_{p_n}''$ converges to a continuous
function $V_{*}''$ uniformly on compact subsets of $\R$.

Define $V_n'':=V_n'+q_n$. Then $V_n''\to V_{*}''$ as $n\to\infty$
uniformly on compact subsets of $\R$ by the above discussion. This
implies that $|V_n[Q^{*}]|=|V_n'[Q^{*}]|=|V_n''[Q^{*}]|\to
|V''_{*}[Q^{*}]|<\infty$ as $n\to\infty$. This contradicts
$|V_n[Q^{*}]|\to\infty$ as $n\to\infty$. Thus case (b) cannot occur
as well. It follows that the sequence $\{ V_n(x)\}_{n\in\mathbf{N}}$
has uniformly bounded Zygmund norms.
\end{proof}

\paragraph{\it End of the proof of Theorem
\ref{thm:zygmund-parametrization-shears}} Proposition
\ref{prop:claim2} states that $\dot{s}$ is induced by a continuous
vector field $V$. Proposition \ref{prop:Zyg_bdd_uniform_in_shears}
states that $V$ is Zygmund bounded with the Zygmund constant
depending on the constant $C$ in (\ref{eq:zygmund-condition-shears})
which finishes the proof.
\end{proof}

\vskip .2 cm

The proof of the above theorem also establishes the uniform
convergence property of the series defining the Zygmund vector field
in terms of the infinitesimal shear function $\dot{s}:\F\to\R$. The
series giving the vector field corresponding to the infinitesimal
shear function $\dot{s}:\F\to\R$ is defined by adding over all fans
vector fields corresponding to the fans which is a particular order
of the summation of the series of elementary shear vector fields for
$\dot{s}$. This is analogous to the convergence of the C\'esaro sum
of the Fourier series of continuous functions on $S^1$(see
\cite{Kat}).

Note that there exist continuous functions on the unit disk $S^1$
whose Fourier series does not converge at a single point of $S^1$.
Similarly, the series
$V(x)=\sum_{e=(a,b)\in\F}\dot{s}(e)\chi_{[a,b]}(x)
\frac{(x-a)(x-b)}{a-b}$, where $\chi_{[a,b]}$ is the characteristic
function of the interval $[a,b]$, does not necessarily converge on
$\R\setminus\hat{\Q}$ because the infinitesimal shear function
$\dot{s}$ does not induce a bounded measured lamination in the sense
of Thurston (see \cite{Th}, \cite{Sa1}). Thus we need a summation
method similar to the situation for the Fourier series which is
given in the following corollary.

\vskip .2 cm

\begin{corollary}
\label{cor:zyg_series_estimate} Let $\dot{s}:\F\to\R$ be a shear
function which satisfies (\ref{eq:zygmund-condition-shears}) in each
fan of $\F$. Then the series defining the corresponding vector field
$$
V(x)=\sum_{p\in\hat{\Q}}V_p(x)
$$
converges absolutely and uniformly on compact subsets of $\R$, where
$V_p(x)$ is the vector field corresponding to $\dot{s}|_{\F_p}$. The
term $V_p(x)$ is a piecewise quadratic polynomial except at the tip
$p$. Moreover, we have
$$
|V(x)-\sum_{i=1}^kV_{p_i}(x)|<C\sum_{i=n}^{\infty}ie^{-\frac{(i-2)\delta}{2}}
$$
where $\delta =\log (1+\sqrt{2})^2$, $C$ is a function of the
constant in (\ref{eq:zygmund-condition-shears}) and $\{ p_1,\ldots
,p_k\}$ are all Farey number of order at most $n$. In addition, the
Zygmund norms of $V(x)$ and $\sum_{i=1}^kV_{p_i}(x)$ for
$k\in\mathbf{N}$ are bounded by constant $M(C)$, where $C$ is the
constant from (\ref{eq:zygmund-condition-shears}).
\end{corollary}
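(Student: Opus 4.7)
The plan is to assemble the corollary directly from the three propositions proved inside Theorem \ref{thm:zygmund-parametrization-shears}, since all four claims of the corollary are essentially already contained in the ingredients of that proof; the only genuine additional work is (i) verifying the piecewise-quadratic structure of each $V_p$ and (ii) refining the convergence estimate so that the cutoff is expressed in terms of the Farey order rather than the traversal order along a geodesic ray.

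First I would recall the construction: each $V_p$ equals $B^* V_{\dot{s}_p}^{\infty}$ for the unique $B\in PSL_2(\Z)$ mapping the standard fan with tip $\infty$ to $\mathcal{F}_p$. Since $V_{\dot{s}}^{\infty}$ defined by (\ref{eq:fan-zygmund}) is piecewise linear on $\R$, its pullback under the Möbius map $B$ is a piecewise quadratic rational function on each subinterval determined by the images of the integers, with the only non-smooth point being the tip $p=B(\infty)$. This gives the piecewise-quadratic assertion. Absolute and uniform convergence on compact subsets is exactly Proposition \ref{prop:claim2}, and the uniform Zygmund bound on $V$ and on every partial sum follows by applying Proposition \ref{prop:Zyg_bdd_uniform_in_shears} both to $\dot{s}$ and to the truncations of $\dot{s}$ (since any truncation still satisfies condition (\ref{eq:zygmund-condition-shears}) with the same constant, each truncation $\sum_{i=1}^k V_{p_i}$ is itself the Zygmund vector field produced by the partial shear function).

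The quantitative tail estimate is the point that requires a small extra argument. Fix $x$ in a compact subset $K$ of $\R$ and let $r_x$ be the geodesic ray from the center of $\Delta_0$ to $x$. As observed in the proof of Proposition \ref{prop:claim2}, among all fans whose tip is a Farey number of order exactly $m\geq 3$, at most one fan has its geodesics meeting $r_x$, and for such a fan the first geodesic meeting $r_x$ lies at hyperbolic distance at least $\tfrac{m-2}{2}\delta$ from the center of $\Delta_0$, with $\delta=\log(1+\sqrt 2)^2$. Hence the Euclidean length of the support of the corresponding $V_p$ is of order $e^{-(m-2)\delta/2}$ (with constant depending on $K$), and using the $\varepsilon\log(1/\varepsilon)$ modulus of continuity of the normalized Zygmund vector field $V_p$—whose Zygmund norm is uniformly bounded by $M(C)$ through Proposition \ref{prop:Zyg_bdd_uniform_in_shears}—we obtain $|V_p(x)|\leq C\, m\, e^{-(m-2)\delta/2}$. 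Summing over $m\geq n$ yields
\begin{equation*}
\Big| V(x)-\sum_{i=1}^{k} V_{p_i}(x)\Big| \leq C\sum_{i=n}^{\infty} i\, e^{-(i-2)\delta/2},
\end{equation*}
which is the required bound, with $C$ depending only on the constant in (\ref{eq:zygmund-condition-shears}).

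The main obstacle, and the only genuinely new point beyond repackaging the previous propositions, is checking that the Farey-order indexing is compatible with the geometric decay used inside Proposition \ref{prop:claim2}: there one orders fans by the order in which $r_x$ crosses them, which coincides with Farey order only after the first two or three levels. This is handled by absorbing the finitely many low-order discrepancies into the constant $C$ and by observing that once the Farey order exceeds the orders of the tips of fans adjacent to $\Delta_0$, the two orderings differ by at most a uniform shift, so the geometric series $\sum i\, e^{-(i-2)\delta/2}$ continues to dominate the tail.
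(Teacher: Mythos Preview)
Your proposal is correct and takes essentially the same approach as the paper: the corollary is stated there without a separate proof, as an immediate consequence of the proof of Theorem \ref{thm:zygmund-parametrization-shears} (specifically Propositions \ref{prop:claim2} and \ref{prop:Zyg_bdd_uniform_in_shears} together with the distance estimate inside the proof of Proposition \ref{prop:claim2}), and you have unpacked exactly those ingredients, adding the piecewise-quadratic observation and the Farey-order reconciliation that the paper leaves implicit. One small caution: your claim that ``any truncation still satisfies (\ref{eq:zygmund-condition-shears}) with the same constant'' is not entirely innocent---the shear function of $\sum_{i=1}^k V_{p_i}$ equals $\dot{s}$, $\tfrac12\dot{s}$, or $0$ on an edge according to how many of its endpoints have Farey order $\le n$, and verifying (\ref{eq:zygmund-condition-shears}) for this hybrid with a constant depending only on $C$ is not automatic---but the paper makes the identical unproved appeal to Proposition \ref{prop:Zyg_bdd_uniform_in_shears} for the partial sums (see the proof of Theorem \ref{thm:Hilbert-on-Zygmund}), so you are at the same level of rigor as the source.
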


\section{The Hilbert transform in shears}

Let $V:\R\to\R$ be a Zygmund vector field on $\hat{\R}$ and let
$\dot{s}:\F\to\R$ be the corresponding infinitesimal shear function.
We use the formula
$$
V(x)=\sum_{p\in\hat{Q}}V_p(x)
$$
to find the Hilbert transform of $V$ in terms of the corresponding
infinitesimal shear function $\dot{s}$, where the order of the
summation is given by increasing Farey orders of tips
$p\in\hat{\Q}$. Recall that the fan $\F_p$ with tip $p$ is
enumerated by $\{ e_n\}_{n\in\Z}$ such that the initial point of
$e_0$ is $p$ and the terminal point of $e_1$ is $p$, where $e_n$ is
oriented to the left as seen from the reference triangle $\Delta_0$.
We define the {\it tip}-$p$ {\it infinitesimal shear function}
$\dot{s}_p:\F_p=\{ e_n\}_{n\in\mathbf{N}}\to\R$ by setting
$\dot{s}_p(e_n):=\frac{1}{2}\dot{s}(e_n)$. Thus $\dot{s}_p$
satisfies (\ref{eq:zygmund-condition-shears}) with the constant
$\frac{1}{2}C$, where $C$ is the constant of $\dot{s}$. Therefore
$$
V_p(x)=\sum_{n\in\Z}\dot{s}_p(e_n)V_{(a_n,b_n)}(x)
$$
is Zygmund bounded with the cross-ratio norm bounded independently
of the fan $\F_p$, where $e_n$ has initial point $a_n$ and terminal
point $b_n$, and $V_{(a_n,b_n)}$ is the elementary shear vector
field defined in section 3.

Since $V_p$ is Zygmund bounded, it follows that the Hilbert
transform $HV_p$ is well-defined and also Zygmund bounded (see
\cite{Zyg}, \cite{Ga}, \cite{GL}). Moreover, the cross-ratio norm of
$HV_p$ is bounded in terms of the cross-ratio norm of $V_p$.

We first give a lemma which facilitates various convergence
arguments in the rest of the paper.

\begin{lemma} \label{lem:conv_of_Hilb_transform}
Let $V_n:\R\to\R$, for $n\in\mathbf{N}$, and $V:\R\to\R$ be Zygmund
bounded vector fields on $\hat{\R}$ normalized to vanish at $0$, $1$
and $\infty$. Suppose that
$$
V_n(x)\to V(x)
$$
as $n\to\infty$ uniformly on compact subsets of $\R$ and that the
sequence $\{ V_n\}_{n\in\mathbf{N}}$ has uniformly bounded
cross-ratio norms. Then
$$
H(V_n)(x)\to H(V)(x)
$$
as $n\to\infty$ uniformly on compact subsets of $\R$.
\end{lemma}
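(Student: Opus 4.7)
The plan is to reduce to the statement that if $W_n := V_n - V$ has uniformly bounded cross-ratio norms, vanishes at $0,1,\infty$, and $W_n\to 0$ uniformly on compact subsets of $\R$, then $HW_n\to 0$ uniformly on compact subsets of $\R$. Fix a compact $K\subset\R$. Starting from formula (\ref{eq:Hilbert_transform}),
$$HW_n(x)=-\frac{1}{\pi}\,\mathrm{p.v.}\int_{\R}\frac{x(x-1)}{\xi(\xi-1)(\xi-x)}\,W_n(\xi)\,d\xi,$$
I would split the integral, for $x\in K$ and parameters $R\gg 1$, $\epsilon\ll 1$, into a tail $\{|\xi|>R\}$, a singular neighborhood $\{|\xi-x|<\epsilon\}$, and the complementary bounded middle region, and estimate each piece uniformly in $x\in K$ and $n$.

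For the tail, uniformly bounded cross-ratio (hence Zygmund) norms combined with the normalization at $0,1,\infty$ force the growth bound $|W_n(\xi)|\leq M|\xi|\log|\xi|$ at infinity, with $M$ independent of $n$ (a standard iteration of the Zygmund inequality from $W_n(0)=W_n(1)=0$). Since $R(x,\xi)=O(|\xi|^{-3})$ uniformly for $x\in K$, the tail is bounded by a constant multiple of $\int_R^\infty (\log\xi)/\xi^{2}\,d\xi$, which is made arbitrarily small by choosing $R$ large. On the middle region the kernel $R(x,\xi)$ is bounded, so the hypothesis $W_n\to 0$ uniformly on compact sets directly gives the vanishing of this contribution as $n\to\infty$.

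The delicate piece is the singular integral over $\{|\xi-x|<\epsilon\}$. I would factor
$$R(x,\xi)=\frac{1}{\xi-x}\cdot\frac{x(x-1)}{\xi(\xi-1)},$$
and, after shrinking $\epsilon$ so that $\{|\xi-x|<\epsilon\}$ avoids $\{0,1\}$ (the case when $K$ meets $\{0,1\}$ is harmless, since at $x=0,1$ the factor $x(x-1)$ vanishes and more generally the Zygmund modulus together with $W_n(0)=W_n(1)=0$ absorbs the $\xi=0,1$ denominators), the second factor is uniformly bounded on $K\times\{|\xi-x|<\epsilon\}$. Using the symmetry $\mathrm{p.v.}\int_{x-\epsilon}^{x+\epsilon}(\xi-x)^{-1}d\xi=0$, I replace $W_n(\xi)$ by $W_n(\xi)-W_n(x)$ in the integrand without altering the principal value. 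A Zygmund function with uniformly bounded norm satisfies the uniform modulus of continuity $|W_n(\xi)-W_n(x)|\leq C|\xi-x|\log(1/|\xi-x|)$, so the singular contribution is dominated by $C\epsilon\log(1/\epsilon)$, uniformly in $n$ and $x\in K$.

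A standard $\varepsilon/3$ argument then yields $HW_n\to 0$ uniformly on $K$. The main obstacle is precisely the singular contribution near $\xi=x$: the hypothesis of uniformly bounded cross-ratio norms is used not merely for the growth estimate at infinity but, crucially, to deliver a uniform $t\log(1/t)$ modulus of continuity, without which the principal-value kernel could not be tamed uniformly in $n$.
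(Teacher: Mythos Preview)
Your argument is correct and, for the singular contribution near $\xi=x$, takes a genuinely different route from the paper. You handle that piece purely on the real line: subtract $W_n(x)$ using the oddness of $1/(\xi-x)$ and then invoke the uniform Zygmund modulus of continuity $|W_n(\xi)-W_n(x)|\leq C|\xi-x|\log(1/|\xi-x|)$ to get a bound of order $\epsilon\log(1/\epsilon)$ uniformly in $n$. The paper instead passes to the upper half-plane via the Beurling--Ahlfors extension $\tilde V_n$ and applies Stokes' theorem to rewrite the principal value over $(x-\epsilon,x+\epsilon)$ as a double integral of $\mu_n=\bar\partial\tilde V_n$ over the half-disk $D_\epsilon(x)$ plus a contour integral over the half-circle $C_\epsilon(x)$; uniform boundedness of $\|\mu_n\|_\infty$ (from the uniform cross-ratio bound) kills the area term as $\epsilon\to 0$, and uniform convergence of $\tilde V_n$ on compacts of $\H\cup\R$ handles the contour term. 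Your approach is more elementary and self-contained, avoiding the extension machinery entirely; the paper's approach ties the estimate back to Beltrami coefficients, which is natural given the surrounding Teichm\"uller-theoretic context. The tail and middle-region arguments are essentially the same in both. One small point: your treatment of the case $x$ near $0$ or $1$ is brief, but it is routine once one notes that $W_n(0)=W_n(1)=0$ together with the Zygmund modulus makes $W_n(\xi)/\xi$ and $W_n(\xi)/(\xi-1)$ locally integrable, so the apparent extra poles of the kernel are harmless.
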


\begin{proof}
Recall that the Hilbert transform of $V(x)$ is given by
\begin{equation*}
HV(x)=-\frac{1}{\pi}\lim_{\epsilon\to
0}\Big{[}\int_{-\infty}^{x-\epsilon}\frac{x(x-1)}{\zeta (\zeta
-1)(\zeta -x)}V(\zeta )d\zeta
+\int_{x+\epsilon}^{\infty}\frac{x(x-1)}{\zeta (\zeta -1)(\zeta
-x)}V(\zeta )d\zeta\Big{]}.
\end{equation*}

Since $V, V_n$ are normalized Zygmund vector fields with uniformly
bounded cross-ratio norms, it follows that there exists $M>0$ such
that $|V(x)|,|V_n|\leq M|x|\log |x|$ as $|x|\to\infty$ (see
\cite{GL}). Consequently,
 $\int_R^{\infty}\frac{x(x-1)}{\zeta (\zeta -1)(\zeta
-x)}V(\zeta )d\zeta$, $\int_R^{\infty}\frac{x(x-1)}{\zeta (\zeta
-1)(\zeta -x)}V_n(\zeta )d\zeta$,
$\int_{-\infty}^{-R}\frac{x(x-1)}{\zeta (\zeta -1)(\zeta -x)}V(\zeta
)d\zeta$ and $\int_{-\infty}^{-R}\frac{x(x-1)}{\zeta (\zeta
-1)(\zeta -x)}V_n(\zeta )d\zeta$ are of the order $o(R^{-\alpha})$
for some $0<\alpha <1$ (see \cite{GL}). Since $V_n$ converges to $V$
uniformly on compact subsets of $\R$, it follows that (for a fixed
$\epsilon
>0$)
$$
\int_{-\infty}^{x-\epsilon}\frac{V(\zeta )-V_n(\zeta )}{\zeta (\zeta
-1)(\zeta -x)}d\zeta + \int_{x+\epsilon}^{\infty}\frac{V(\zeta
)-V_n(\zeta )}{\zeta (\zeta -1)(\zeta -x)}d\zeta\to 0
$$
as $n\to\infty$.

To finish the proof, it is enough to show that for any $\delta >0$
there exists $\epsilon =\epsilon (\delta )>0$ and $n_0=n_0(\delta
)\in\mathbf{N}$ such that
\begin{equation}
\label{eq:pv_on_epsilon} \Big{|} p.\ v.\
\int_{x-\epsilon}^{x+\epsilon}\frac{x(x-1)}{\zeta (\zeta -1)(\zeta
-x)}[V(\zeta )-V_n(\zeta )]d\zeta\Big{|}<\delta
\end{equation}
for all $n\geq n_0$.

Let $\tilde{V}$ and $\tilde{V}_n$ be the Beurling-Ahlfors extensions
to $\H$ of $V$ and $V_n$ respectively (see \cite{GS}). The
$\bar{\partial}$-derivatives $\mu$ and $\mu_n$ of $\tilde{V}$ and
$\tilde{V}_n$ are Beltrami differentials on $\H$ corresponding to
the vector fields $V$ and $V_n$(see \cite{GS}, \cite{GL}). The
vector fields $V$ and $V_n$ have uniformly (in $n$) bounded
cross-ratio norms which implies the existence of a constant $S>0$
such that $\|\mu\|_{\infty} ,\|\mu_n\|_{\infty}<S$ for all
$n\in\mathbf{N}$. Since $V_n$ converges to $V$ uniformly on compact
subsets of $\R$, it follows that $\tilde{V}_n$ converges to
$\tilde{V}$ uniformly on compact subsets of $\H\cup\R$ when
considered as a subset of $\C$. Stoke's theorem gives
\begin{equation}
\label{eq:principal-epsilon}
\begin{split}
p.\ v.\ \int_{x-\epsilon}^{x+\epsilon}\frac{x(x-1)}{\zeta (\zeta
-1)(\zeta -x)}V(\zeta )d\zeta =
 \iint_{D_{\epsilon}(x)}\frac{x(x-1)\mu (\xi )}{\xi (\xi -1)(\xi
-x)}d\zeta d\eta -\\ -\int_{C_{\epsilon}(x)}\frac{x(x-1)}{\xi (\xi
-1)(\xi -x)}\tilde{V}(\xi )d\xi \ \ \ \ \ \ \ \ \ \ \ \ \ \
\end{split}
\end{equation}
where $\mu =\bar{\partial}\tilde{V}$, $D_{\epsilon}(x)$ is the upper
half-disk with center $x$ and radius $\epsilon$, and
$C_{\epsilon}(x)$ is the circular part of the boundary of
$D_{\epsilon}(x)$. A similar equation holds for $V_n$.

By (\ref{eq:principal-epsilon}), to prove (\ref{eq:pv_on_epsilon})
it is enough to prove that
\begin{equation}
\label{eq:half-disk} \iint_{D_{\epsilon}(x)}\Big{|}\frac{x(x-1)\mu_n
(\xi )}{\xi (\xi -1)(\xi -x)}\Big{|}d\zeta d\eta\to 0
\end{equation}
as $\epsilon\to 0$ uniformly in $n\in\mathbf{N}$, and that
\begin{equation} \label{eq:half-circle}
\int_{C_{\epsilon}(x)}\frac{x(x-1)}{\xi (\xi -1)(\xi
-x)}[\tilde{V}(\xi )-\tilde{V}_n(\xi )]d\xi\to 0
\end{equation}
as $n\to\infty$ for all $\epsilon <1$. Since $\|\mu_n\|_{\infty}\leq
S$, and the expression $\frac{x(x-1)}{\xi (\xi -1)(\xi -x)}$ has a
simple pole at $x$, and the area of $D_{\epsilon}(x)$ goes to zero
as $\epsilon\to 0$, we obtain (\ref{eq:half-disk}).

To obtain (\ref{eq:half-circle}), we change the variable $\xi
=x+\epsilon e^{i\varphi }$ to obtain
\begin{equation*}
\begin{split}
\int_{C_{\epsilon}(x)}\frac{[\tilde{V}(\xi )-\tilde{V}_n(\xi )]}{\xi
(\xi -1)(\xi -x)}d\xi =\ \ \ \ \ \ \ \ \ \ \ \ \ \ \ \ \ \ \ \ \ \ \
\ \ \ \ \
\\
=\int_0^{\pi}\frac{[\tilde{V}(x+\epsilon e^{i\varphi}
)-\tilde{V}_n(x+\epsilon e^{i\varphi} )]}{(x+\epsilon
e^{i\varphi})(x-1+\epsilon e^{i\varphi}) \epsilon
e^{i\varphi}}\epsilon i e^{i\varphi}d\varphi =
\\
=\int_0^{\pi}\frac{[\tilde{V}(x+\epsilon e^{i\varphi}
)-\tilde{V}_n(x+\epsilon e^{i\varphi} )]}{(x+\epsilon
e^{i\varphi})(x-1+\epsilon e^{i\varphi})} i d\varphi
\end{split}
\end{equation*}

The last integral converges to zero because
$\tilde{V}_n\to\tilde{V}$ uniformly on compact subsets of
$\H\cup\R$.
\end{proof}

In the following theorem we decompose the Hilbert transform $HV$
into an infinite sum of the Hilbert transforms $HV_p$ for
$p\in\hat{\Q}$ analogous to Corollary \ref{cor:zyg_series_estimate}.
The main tool in the proof is Lemma
\ref{lem:conv_of_Hilb_transform}.

\begin{theorem}
\label{thm:Hilbert-on-Zygmund} Let $V:\R\to\R$ be a Zygmund vector
field on $\hat{\R}$ and let $\dot{s}:\F\to\R$ be the corresponding
infinitesimal shear function. Then the Hilbert transform of $V$ is
given by the series
$$
HV(x)=\sum_{p\in\hat{\Q}}HV_p(x)
$$
where $HV_p$ is the Hilbert transform of $V_p$ and $V_p$ is defined
as above using the shears in the fan $\F_p$. The series converges
uniformly on compact subsets of $\R$.
\end{theorem}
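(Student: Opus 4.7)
The plan is to combine Corollary \ref{cor:zyg_series_estimate} with Lemma \ref{lem:conv_of_Hilb_transform}. First I would let $W_n(x) := \sum_{i=1}^{k_n} V_{p_i}(x)$ denote the partial sum of the decomposition $V = \sum_{p \in \hat{\Q}} V_p$, where $\{p_1, \dots, p_{k_n}\}$ enumerates all Farey numbers of order at most $n$. By Corollary \ref{cor:zyg_series_estimate}, $W_n \to V$ uniformly on compact subsets of $\R$, and each $W_n$ is Zygmund bounded with Zygmund norm at most $M(C)$, where $C$ is the constant in (\ref{eq:zygmund-condition-shears}). Since the Zygmund and cross-ratio norms are equivalent for vector fields normalized to vanish at $0, 1, \infty$, the sequence $\{W_n\}$ has uniformly bounded cross-ratio norms.

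Next I would apply Lemma \ref{lem:conv_of_Hilb_transform} to the sequence $\{W_n\}$ with limit $V$: the two hypotheses (uniform convergence on compact subsets of $\R$ and uniform boundedness of cross-ratio norms) are precisely what the previous step establishes. The conclusion is that $H(W_n)(x) \to H(V)(x)$ uniformly on compact subsets of $\R$. Since the Hilbert transform is linear on finite sums, $H(W_n) = \sum_{i=1}^{k_n} H(V_{p_i})$, and passing to the limit $n \to \infty$ yields
$$HV(x) = \sum_{p \in \hat{\Q}} HV_p(x),$$
summed in order of increasing Farey order of tips, with uniform convergence on compact subsets of $\R$.

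The main point requiring verification is that each $V_p$ itself is a normalized Zygmund vector field, so that $HV_p$ is well-defined and so that the Hilbert transform of the finite sum $W_n$ really distributes as the sum of the individual transforms. This was already established in the construction preceding the theorem: each $V_p$ vanishes at $0, 1, \infty$ and is Zygmund bounded with a cross-ratio norm bounded uniformly in $p$ by a constant depending only on $C$, hence $HV_p$ is a well-defined normalized Zygmund vector field and linearity of $H$ on finite sums applies. I do not anticipate a serious obstacle here: the entire proof reduces to assembling Corollary \ref{cor:zyg_series_estimate} (which does the analytic work of controlling the partial sums) and Lemma \ref{lem:conv_of_Hilb_transform} (which transfers uniform convergence through the Hilbert transform).
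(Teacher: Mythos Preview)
Your proposal is correct and follows essentially the same approach as the paper: define the partial sums over Farey numbers of order at most $n$, invoke Corollary \ref{cor:zyg_series_estimate} for uniform convergence and the uniform Zygmund bound (the paper cites Proposition \ref{prop:Zyg_bdd_uniform_in_shears} for the latter, but the Corollary already records this), and then apply Lemma \ref{lem:conv_of_Hilb_transform}. Your added remarks about linearity of $H$ on finite sums and the well-definedness of each $HV_p$ make explicit what the paper leaves implicit, but the skeleton is identical.
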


\begin{remark}
We point out that the uniform convergence of the series is strong
property because the Hilbert transform is given by the principal
value of an improper integral. Moreover, the infinitesimal shear
function has variable sign which further complicates the
convergence. On the other hand, it seems that the series does not
converge absolutely. One should also note that the ``naive'' series
$HV(x)=\sum_{e=(a,b)\in\F}\dot{s}(e)HV_{(a,b)}(x)$ does not
converge, where $HV_{(a,b)}(x)$ is given by
(\ref{eq:simple_shear_Hilbert3}), (\ref{eq:simple_shear_Hilbert2}),
(\ref{eq:simple_shear_Hilbert1}), or
(\ref{eq:simple_shear_Hilbert}).
\end{remark}

\begin{proof}
Let $V_n(x)=\sum_{i=1}^kV_{p_i}(x)$ such that $\{ p_1,p_2,\dots
,p_k\}$ is the set of Farey numbers of order at most $n$. Then $V_n$
converges to $V$ uniformly on compact subsets of $\R$ by Corollary
\ref{cor:zyg_series_estimate}. Proposition
\ref{prop:Zyg_bdd_uniform_in_shears} states that each $V_n$ is a
Zygmund vector field with uniformly bounded Zygmund norms. The
theorem follows from Lemma \ref{lem:conv_of_Hilb_transform}.
\end{proof}

\vskip .2 cm

We compute the Hilbert transform $HV_p$ of $V_p$ in terms of shears.

\begin{theorem}
\label{Hilbert-single-shear} Let $\dot{s}_p:\F_p\to\R$ be a shear
function that satisfies (\ref{eq:zygmund-condition-shears}) in the
fan $\F_p=\{ e_n\}_{n\in\Z}$ and let $V_p$ be the induced vector
field. Then
\begin{equation*}
HV_p(x)=\sum_{n=-\infty}^{\infty} \dot{s}_p(e_n) H(V_{e_n})(x)
\end{equation*}
where $V_{e_n}$ is the elementary shear vector field for the
geodesic $e_n$ oriented to the left as seen from the reference
triangle $\Delta_0$.
\end{theorem}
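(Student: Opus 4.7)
The plan is to mirror the proof of Theorem \ref{thm:Hilbert-on-Zygmund}, approximating $V_p$ by a sequence of Zygmund vector fields on which the Hilbert transform is understood explicitly, and then passing $H$ through the limit using Lemma \ref{lem:conv_of_Hilb_transform}. The natural partial sums are
\[
V_p^{N}(x):=\sum_{|n|\leq N}\dot{s}_p(e_n)V_{e_n}(x),
\]
which are finite linear combinations of the elementary shear vector fields of Section 3. Linearity of $H$ on finite sums gives $H(V_p^{N})(x)=\sum_{|n|\leq N}\dot{s}_p(e_n)H(V_{e_n})(x)$, so the theorem reduces to establishing $H(V_p^{N})(x)\to HV_p(x)$ uniformly on compact subsets of $\R$.

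Uniform convergence of $V_p^{N}$ to $V_p$ on compact subsets of $\R$ is essentially free: after conjugating by the element of $PSL_2(\mathbf{Z})$ that sends $p$ to $\infty$, formula (\ref{eq:fan-zygmund}) shows that $V_p^{N}$ and $V_p$ agree identically on $[-N-1,N+1]$ as soon as $N\geq 1$. What Lemma \ref{lem:conv_of_Hilb_transform} further requires, and what constitutes the real content of the proof, is a uniform cross-ratio bound $\|V_p^{N}\|_{cr}\leq K(C)$ depending only on the Zygmund constant $C$ of $\dot{s}_p$. I would proceed by viewing $V_p^{N}$ as the vector field induced by the truncated shear function on $\F$ that equals $\dot{s}_p$ on $\{e_n\in\F_p:|n|\leq N\}$ and vanishes elsewhere, and appealing to Proposition \ref{prop:Zyg_bdd_uniform_in_shears}. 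The substantive task is then to check that this truncated shear function satisfies the fan inequality (\ref{eq:zygmund-condition-shears}) with a constant depending only on $C$: on fans other than $\F_p$ it vanishes, and on $\F_p$ the three regimes where the Ces\`aro window $[m-k,m+k]$ lies inside, outside, or partially overlaps $[-N,N]$ must be treated in turn.

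The main obstacle I anticipate is the partial-overlap regime, where a termwise estimate gives only a $k$-dependent constant. Here one must exploit the cancellation already present in (\ref{eq:zygmund-condition-shears}) for $\dot{s}_p$ itself, rewriting the truncated Ces\`aro sum as a linear combination of full Zygmund sums of $\dot{s}_p$ at shifted indices so that the boundary contributions are absorbed into a constant multiple of $C$. Once the uniform cross-ratio bound is secured, Lemma \ref{lem:conv_of_Hilb_transform} immediately gives $H(V_p^{N})\to HV_p$ uniformly on compact subsets and, combined with the linearity identity above, yields the asserted series representation of $HV_p$ with uniform convergence on compact subsets of $\R$.
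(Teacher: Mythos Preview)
Your plan hinges on applying Lemma \ref{lem:conv_of_Hilb_transform} to the partial sums $V_p^N$, which requires a \emph{uniform} cross-ratio bound on $\{V_p^N\}_N$. This bound need not hold, and the obstruction is already visible at the center $m=0$, not only in the partial-overlap regime. After conjugating $p$ to $\infty$, formula (\ref{eq:fan-zygmund}) shows that $V_p^N$ is linear for $|x|>N+1$ with slope $\sum_{i=1}^N\dot{s}_p(i)$ on the right and $-\sum_{i=0}^N\dot{s}_p(-i)$ on the left; hence
\[
\frac{V_p^N(t)+V_p^N(-t)-2V_p^N(0)}{t}\longrightarrow\sum_{i=-N}^{N}\dot{s}_p(e_i)\qquad(t\to\infty).
\]
Condition (\ref{eq:zygmund-condition-shears}) only forces this quantity to be $o(N)$ (this is precisely Lemma \ref{lemma:o(n)}), and one can build $\dot{s}_p$ with $\big|\sum_{|i|\le N}\dot{s}_p(e_i)\big|\asymp\sqrt{N}$ while (\ref{eq:zygmund-condition-shears}) holds with a fixed $C$. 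Equivalently, the truncated shear function fails (\ref{eq:zygmund-condition-shears}) at $m=0$, $k\to\infty$, so Proposition \ref{prop:Zyg_bdd_uniform_in_shears} is inapplicable and your proposed rewriting in terms of shifted full Zygmund sums cannot rescue a uniform bound.

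The paper does \emph{not} go through Lemma \ref{lem:conv_of_Hilb_transform} here. Instead it works directly with the Hilbert integral: since $HV_p$ exists, $-\frac{1}{\pi}\int_{-n}^n R(x,\zeta)V_p(\zeta)\,d\zeta\to HV_p(x)$; this truncated integral equals the corresponding integral of $V_p^n$, and one writes $H(V_p^n)=\int_{-n}^n+\int_{|\zeta|>n}$. The tail $\int_n^{\infty}R(x,\zeta)V_p^n(\zeta)\,d\zeta$ sees the linear extension with slope $\sum_{i}\dot{s}_p(i)=o(n)$ against a kernel of size $O(1/\zeta^2)$, giving $o(n)\cdot O(1/n)=o(1)$ (and similarly for the constant term, using $\sum i\dot{s}_p(i)=o(n^2)$). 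Thus the essential input is the \emph{sublinear} growth from Lemma \ref{lemma:o(n)}, not a uniform Zygmund bound on the truncations.
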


\begin{proof}
We divide the proof into two cases $p=\infty$ and $p\neq\infty$.

Assume first that $p=\infty$. We have
\begin{equation}
\label{eq:V_infty}
\begin{split}
V_{\infty}(\zeta )=\dot{s}_{\infty}(1)(\zeta
-1)+\dot{s}_{\infty}(2)(\zeta -2)+\cdots +\ \ \ \ \ \ \
\\+\dot{s}_{\infty}(n)(\zeta
-n) =\Big{(}\sum_{i=1}^n\dot{s}_{\infty}(i)\Big{)}\zeta
-\sum_{i=1}^ni\dot{s}_{\infty}(i)
\end{split}
\end{equation}
for $1\leq n\leq\zeta \leq n+1$, we have
$$
V_{\infty}(x)=0
$$
for $0\leq \zeta\leq 1$ , and we have
\begin{equation*}
\begin{split}
V_{\infty}(\zeta )=\dot{s}_{\infty}(0)\zeta
+\dot{s}_{\infty}(-1)(\zeta +1)+\cdots +\ \ \ \ \ \ \
\\+\dot{s}_{\infty}(-n)(\zeta
+n) =\Big{(}\sum_{i=0}^n\dot{s}_{\infty}(-i)\Big{)}\zeta
+\sum_{i=0}^ni\dot{s}_{\infty}(-i)
\end{split}
\end{equation*}
for $-n-1\leq \zeta\leq -n\leq 0$.

Since $V_{\infty}$ is a Zygmund map, it follows that $H(V_{\infty})$
exists and it is also a Zygmund map. Thus
$$
-\frac{1}{\pi}\int_{-n}^n\frac{x(x-1)}{\zeta (\zeta -1)(\zeta
-x)}V_{\infty}(\zeta )d\zeta\to H(V_{\infty})(x)
$$
as $n\to\infty$ for each $x\in\R$. Define
\begin{equation}
V_{\infty}^n(x)=\left\{
\begin{array}l
V_{\infty}(x),\ \ \ \ \ \ \ \ \ \ \ \ \ \ \ \ \ \ \ \ \ \ \ \ \ \ \ \ \ \ \ \ \mbox{ for } -n\leq x\leq n\\
\Big{(}\sum_{i=1}^n\dot{s}_{\infty}(i)\Big{)}x
-\sum_{i=1}^ni\dot{s}_{\infty}(i), \ \ \ \ \mbox{ for }x>n\\
\Big{(}\sum_{i=0}^n\dot{s}_{\infty}(-i)\Big{)}x
+\sum_{i=0}^ni\dot{s}_{\infty}(-i), \mbox{ for }x<-n
\end{array}
\right.
\end{equation}

It follows that
$$
-\frac{1}{\pi}\int_{-n}^n\frac{x(x-1)}{\zeta (\zeta -1)(\zeta
-x)}V_{\infty}(\zeta )d\zeta
=-\frac{1}{\pi}\int_{-n}^n\frac{x(x-1)}{\zeta (\zeta -1)(\zeta
-x)}V_{\infty}^n(\zeta )d\zeta .
$$
Since the Hilbert transform is given by an integral kernel, it
follows that
$$H(V_{\infty}^n)(x)=\sum_{i=-n}^n\dot{s}_{\infty}(i)H(V_{e_i})(x).$$

To finish the proof of the theorem in this case, it is enough to
show that $$\int_{n}^{\infty}\frac{x(x-1)}{\zeta (\zeta -1)(\zeta
-x)}V_{\infty}^n(\zeta )d\zeta\to 0$$ and
$$\int_{-\infty}^{-n}\frac{x(x-1)}{\zeta (\zeta -1)(\zeta
-x)}V_{\infty}^n(\zeta )d\zeta\to 0$$ as $n\to \infty$.

Using elementary computations, we obtain
\begin{equation}
\label{eq:remainder1}
\begin{array}l
\int_{n}^{\infty}\frac{x(x-1)}{\zeta (\zeta -1)(\zeta
-x)}V_{\infty}^n(\zeta )d\zeta=\\
x(x-1)\int_{n}^{\infty}\frac{\big{(}\sum_{i=-n}^n\dot{s}_{\infty}(i)\big{)}\zeta
-\sum_{i=-n}^ni\dot{s}_{\infty}(i)}{\zeta (\zeta -1)(\zeta
-x)}d\zeta=\\
(\sum_{i=-n}^n\dot{s}_{\infty}(i))O(\frac{1}{n})+(\sum_{i=-n}^ni\dot{s}_{\infty}(i))O(\frac{1}{n^2})
\end{array}
\end{equation}

If the tip-$\infty$ infinitesimal shear function which satisfies
(\ref{eq:zygmund-condition-shears}) in addition satisfies
\begin{equation}
\label{eq:estimate_int_Hilbert}
\sum_{i=-n}^n\dot{s}_{\infty}(i)=o(n)
\end{equation}
then
$$
\sum_{i=-n}^ni\dot{s}_{\infty}(i)=o(n^2)
$$
and the quantity (\ref{eq:remainder1}) converges to zero as
$n\to\infty$. Thus, to finish the proof that
$\int_{n}^{\infty}\frac{x(x-1)}{\zeta (\zeta -1)(\zeta
-x)}V_{\infty}^n(\zeta )d\zeta\to 0$ as $n\to\infty$ it remains to
show (\ref{eq:estimate_int_Hilbert}) which is done in Lemma
\ref{lemma:o(n)} below. The proof of
$\int_{-\infty}^{-n}\frac{x(x-1)}{\zeta (\zeta -1)(\zeta
-x)}V_{\infty}^n(\zeta )d\zeta\to 0$ as $n\to\infty$ is similar and
left to the reader. This finishes the proof for $p=\infty$.

Assume that $p=0$ and let $\F_0=\{ e_n\}_{n\in\mathbf{Z}}$ be the
fan with tip $0$. Then $e_1=(0,\infty)$ and $e_n=(0,\frac{1}{-n+1})$
for $n\in\mathbf{Z}\setminus\{ 1\}$. Fix $n\in\mathbf{N}$. We have
$$
V_0^n(\zeta )=\dot{s}(e_0)\frac{\zeta (\zeta
-1)}{1}+\dot{s}(e_{-1})\frac{\zeta (\zeta
-\frac{1}{2})}{\frac{1}{2}}+\cdots +\dot{s}(e_{-n+1})\frac{\zeta
(\zeta -\frac{1}{n})}{\frac{1}{n}}
$$
for $0\leq\zeta\leq\frac{1}{n}$. A short computation gives
$$
V_0^n(\zeta
)=\Big{[}\sum_{i=1}^ni\dot{s}(-i+1)\Big{]}\zeta^2-\Big{[}\sum_{i=1}^n\dot{s}(-i+1)\Big{]}\zeta
$$
for $0\leq\zeta\leq\frac{1}{n}$.

By the above and by Lemma (\ref{lemma:o(n)}), we get that
$$
|V_0^n(\zeta )|\leq o(n^2)\zeta^2+o(n)\zeta
$$
for $0\leq\zeta\leq\frac{1}{n}$. Then
$$
|\int_0^{1/n}\frac{V_0^n(\zeta )}{\zeta (\zeta
-1)(\zeta-x)}d\zeta|\leq o(n^2)\frac{1}{n^2}+o(n)\frac{1}{n}\to 0
$$
as $n\to\infty$. Similarly
$$
|\int^0_{-1/n}\frac{V_0^n(\zeta )}{\zeta (\zeta
-1)(\zeta-x)}d\zeta|\to 0
$$
as $n\to\infty$. This implies $H(V_0^n)\to H(V_0)$ as $n\to\infty$.

Assume that $p\neq\infty ,0$. We have that
$$
|V_p^n(\zeta )|\leq o(n^2)(\zeta -p)^2+o(n)|\zeta -p|
$$
because the vector field $V_p^n$ is the push-forward by a M\"obius
map $B:0\mapsto p$ of the vector field $V_0^n$. The argument that
$$
\int_p^{p_{n+1}}\frac{V_p^n(\zeta )}{\zeta (\zeta -1)(\zeta -x)}\to
0
$$
as $n\to\infty$ proceeds similarly as the case $p=0$ when $x=p$, and
it is even easier when $x\neq p$ because the integrand does not have
a singularity for $n$ large. Thus $H(V_p^n)\to H(V_p)$ as
$n\to\infty$.

We obtained that $H(V_p^n)\to H(V_p)$. Since
$H(V_p^n)(x)=\sum_{i=-n}^{n} \dot{s}_p(e_i) H(V_{e_i})(x)$ the
theorem follows.
\end{proof}

\begin{lemma}
\label{lemma:o(n)} Let $\dot{s}:\mathcal{F}\to\R$ be a shear
function which satisfies (\ref{eq:zygmund-condition-shears}) in a
fan $\{ e_n\}_{n\in\mathbb{Z}}$. Then
$$
\sum_{i=k}^{k+n}\dot{s}(e_i)=o(n)
$$
for each $k\in\mathbb{Z}$.
\end{lemma}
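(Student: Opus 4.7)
The plan is to prove the stronger quantitative bound $A_N := \sum_{i=0}^{N-1}\dot{s}(e_i) = O(\sqrt{|N|})$ as $|N|\to\infty$, from which the lemma follows at once via $\sum_{i=k}^{k+n}\dot{s}(e_i) = A_{k+n+1} - A_k = O(\sqrt{n}) = o(n)$ for any fixed $k$. Write $a_i := \dot{s}(e_i)$. Condition (\ref{eq:zygmund-condition-shears}) amounts to the uniform Fejér-mean bound $|S_K(m)|\leq C$ for all $m\in\mathbb{Z}$ and $K\geq 1$, where
$$
S_K(m) := \sum_{|j|\leq K-1}\tfrac{K-|j|}{K}\,a_{m+j};
$$
specializing to $K=1$ yields the pointwise estimate $|a_m|\leq C$.

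The key step converts this triangular-kernel bound into a bound on ordinary partial sums by averaging over a window of centers. Summing over $m_1\leq m\leq m_2$ and exchanging the order of summation, the coefficient of $a_l$ becomes $\sum_{m_1\leq m\leq m_2}(K-|l-m|)_+/K$, which equals the full Fejér mass $\sum_{|j|\leq K-1}(K-|j|)/K = K$ for every $l$ in the interior range $[m_1+K-1,\,m_2-K+1]$ and lies in $[0,K]$ at the $O(K)$ boundary indices. Using $|a_l|\leq C$ at the boundary, this gives
$$
K\,\bigl|A_{m_2-K+2} - A_{m_1+K-1}\bigr| \;\leq\; (m_2 - m_1 + 1)\,C + O(K^2 C).
$$

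Taking $m_1 = K-1$ and $m_2 = N+K-2$, so that the interior endpoints are $2K-2$ and $N$ and the window length is $N$, and dividing by $K$, I obtain $|A_N - A_{2K-2}| \leq NC/K + O(KC)$. The two error terms balance at $K := \lceil\sqrt{N}\rceil$, yielding $|A_N - A_{2\sqrt{N}}| = O(\sqrt{N})$; combined with $|A_{2\sqrt{N}}|\leq 2\sqrt{N}\cdot C$, this proves $A_N = O(\sqrt{N})$. A symmetric choice of $m_1, m_2$ handles $N \to -\infty$. The main obstacle is preventing the $O(K^2 C)$ boundary contribution from swamping the main term $(m_2-m_1+1)C$, which forces the intermediate window size $K \asymp \sqrt{N}$.
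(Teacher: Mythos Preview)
Your proof is correct and in fact yields the stronger bound $A_N=O(\sqrt{|N|})$, from which the lemma's $o(n)$ conclusion is immediate. The Fej\'er-kernel averaging computation is sound: the interior coefficients do equal the full mass $K$, the $O(K)$ boundary indices each contribute at most $KC$ via the pointwise bound $|a_m|\le C$, and balancing the two error terms at $K\asymp\sqrt{N}$ is exactly right.

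The paper's argument is related in spirit but organized differently. It fixes the center $m=k$, introduces the symmetric partial sums $a_i=\sum_{|j|\le i}\dot{s}(e_{k+j})$, observes that condition~(\ref{eq:zygmund-condition-shears}) says their Ces\`aro means are bounded while $|a_i-a_{i-1}|\le 2C$, and then argues by contradiction: if $a_{i_n}\ge \kappa\, i_n$ along a subsequence, the increment bound forces nearby $a_{i_n+j}$ to stay large over a window $j\le \sqrt{i_n}$, contradicting the Ces\`aro bound. So both proofs hinge on the same $\sqrt{n}$ window, but the paper works with a single center and symmetric sums via contradiction, whereas you average the hypothesis over a sliding window of centers and obtain an explicit constant. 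Your route has two small advantages: it directly controls the one-sided partial sums $A_N$ (the paper's reduction to the symmetric quantity $a_n$ is asserted without elaboration), and it delivers the quantitative rate $O(\sqrt{n})$ rather than merely $o(n)$.
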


\begin{proof}
Let $a_0=\dot{s}(e_k)$ and
$a_i=a_{i-1}+\dot{s}(e_{k+i})+\dot{s}(e_{k-i})$ for $i\geq 1$. Then
the condition (\ref{eq:zygmund-condition-shears}) becomes
\begin{equation} \label{eq:sum_a_n's} |\frac{1}{n}(a_0+a_1+\cdots +a_{n-1})|\leq C.
\end{equation}
From (\ref{eq:zygmund-condition-shears}) we also obtain
\begin{equation} \label{eq:neighbor_difference}
|a_{i}-a_{i-1}|\leq 2C
\end{equation}
for all $i\geq 1$. The statement of the lemma translates to
$a_n=o(n)$.

Assume on the contrary that there exists a sequence $i_n\to\infty$
as $n\to\infty$ and a constant $k>0$ such that
$$
|a_{i_n}|\geq ki_n
$$
for all $n$. Without loss of generality, we assume that
$$
a_{i_n}\geq ki_n
$$
and seek a contradiction. By (\ref{eq:neighbor_difference}), we get
that \begin{equation} \label{eq:increase}
\begin{array}l
a_{i_n+1}\geq a_{i_n}-2C\geq ki_n-2C \\
a_{i_n+2}\geq a_{i_n}-4C\geq ki_n-4C \\
\cdots \\
a_{i_n+j}\geq a_{i_n}-2jC\geq ki_n-2jC
\end{array}
\end{equation}
for some $j\in\mathbb{Z}$. Adding the inequalities in
(\ref{eq:increase}), we obtain
$$
a_0+a_1+\cdots +a_{i_n}+a_{i_n+1}+\cdots +a_{i_n+j}\geq
ki_nj-j(j+1)C-Ci_n.
$$
If $j=[\sqrt{i_n}]$ then
$$
\frac{1}{i_n+j}(a_0+\cdots a_{i_n+j} )\geq
\frac{ki_nj}{i_n+j}-\frac{j(j+1)}{i_n+j}C-\frac{i_nC}{i_n+j}\to\infty$$
as $n\to\infty$. This is in a contradiction with
(\ref{eq:sum_a_n's}).
\end{proof}

\section{Recovering the shears}

We gave a formula for the Hilbert transform of a Zygmund vector
field in terms of the corresponding infinitesimal shear function
(see theorems \ref{thm:Hilbert-on-Zygmund} and
\ref{Hilbert-single-shear}). We describe below how to obtain the
infinitesimal shear function $\dot{s}$ corresponding to a vector
field $V:\R\to\R$.

Let $a,b,c,d\in \hat{\R}$ (given in the counter-clockwise order) be
four vertices of an ideal hyperbolic quadrilateral which is
decomposed into the union of the triangle with vertices $a,b,d$ and
the triangle with vertices $b,c,d$. Then the value of the shear
function $\dot{s}$ on the geodesic $(b,d)$ with respect to the above
two triangles is given by (see \cite{GL})
\begin{equation}
\label{eq:shear_from_vector_field}
\begin{split}
\dot{s}((b,d))=\frac{V(c)-V(b)}{c-b}+\frac{V(d)-V(a)}{d-a}-\\
\frac{V(b)-V(a)}{b-a}-\frac{V(d)-V(c)}{d-c}.
\end{split}
\end{equation}
Formula (\ref{eq:shear_from_vector_field}) is the first variation of
the cross-ratio
$$
cr(a,b,c,d)=\frac{(c-b)(d-a)}{(b-a)(d-c)}
$$
where $a,b,c,d\in\hat{\R}$ are given in the cyclic order on $\hat{\R
}$. For this definition of the cross-ratio,
$$
\log cr(a,b,c,d)
$$
is the shear on the geodesic $(b,d)$ considered as a diagonal of the
quadrilateral with vertices $a,b,c,d$.

Using the equation (\ref{eq:shear_from_vector_field}) and theorems
\ref{thm:Hilbert-on-Zygmund} and \ref{Hilbert-single-shear}, we
immediately obtain

\begin{corollary}
\label{cor:shears_of_Hilbert(V)} Let $\dot{s}:\F\to\R$ be the shear
function of a Zygmund bounded vector field on $\hat{\R}$. Then the
infinitesimal shear function
$$
H(\dot{s}):\F\to\R
$$
of the vector field $H(V)$ obtained by taking the Hilbert transform
of $V$ is given by
\begin{equation}
\label{eq:shears_of_Hilbert(V)}
\begin{array}l
H(\dot{s})((b,d))=\sum_{p\in\hat{Q}}\sum_{n\in\Z}\dot{s}_p(e_n)\Big{[}
\frac{H(V_{e_n})(c)-H(V_{e_n})(b)}{c-b}
+\frac{H(V_{e_n})(d)-H(V_{e_n})(a)}{d-a}\\
\ \ \ \ \ \ \ \ \ \ \ \ \ \ \ \ \ \ \ \ \
-\frac{H(V_{e_n})(b)-H(V_{e_n})(a)}{b-a}
-\frac{H(V_{e_n})(d)-H(V_{e_n})(c)}{d-c}\Big{]}
\end{array}
\end{equation}
where $(b,d)\in\F$ is the common boundary side of the two
complementary triangles of $\F$ with vertices $(a,b,d)$ and
$(b,c,d)$, and $e_n$ is a geodesic of the fan $\F_p$ oriented such
that $p$ is the initial point of $e_n$ for $n\geq 1$ and $p$ is the
terminal point of $e_n$ for $n\leq 0$.
\end{corollary}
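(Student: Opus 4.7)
The plan is to combine the shear-recovery formula \eqref{eq:shear_from_vector_field} with the two series expansions for the Hilbert transform already established in Theorems \ref{thm:Hilbert-on-Zygmund} and \ref{Hilbert-single-shear}. First I would apply \eqref{eq:shear_from_vector_field} to the Zygmund vector field $H(V)$ and the ideal quadrilateral with vertices $a,b,c,d$, obtaining
\begin{equation*}
\begin{split}
H(\dot{s})((b,d)) = \frac{H(V)(c)-H(V)(b)}{c-b}+\frac{H(V)(d)-H(V)(a)}{d-a} \\
-\frac{H(V)(b)-H(V)(a)}{b-a}-\frac{H(V)(d)-H(V)(c)}{d-c}.
\end{split}
\end{equation*}
This is legitimate because $H(V)$ is itself a Zygmund vector field (the Hilbert transform preserves $\mathcal{Z}$), so its infinitesimal shear function is recovered from its values at the vertices of any ideal quadrilateral by the same first-variation-of-cross-ratio formula.

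Next I would substitute the series $H(V)(x)=\sum_{p\in\hat{\Q}}H(V_p)(x)$ supplied by Theorem \ref{thm:Hilbert-on-Zygmund} at each of the four points $x\in\{a,b,c,d\}$. Since the series converges (uniformly on compact subsets of $\R$, hence in particular at each individual point of $\hat{\R}$) and the right-hand side above is a finite $\R$-linear functional in the values of $H(V)$, one may interchange the sum over $p$ with this finite linear combination. This yields
\begin{equation*}
\begin{split}
H(\dot{s})((b,d))=\sum_{p\in\hat{\Q}}\Big[\tfrac{H(V_p)(c)-H(V_p)(b)}{c-b}+\tfrac{H(V_p)(d)-H(V_p)(a)}{d-a} \\ -\tfrac{H(V_p)(b)-H(V_p)(a)}{b-a}-\tfrac{H(V_p)(d)-H(V_p)(c)}{d-c}\Big].
\end{split}
\end{equation*}

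Finally, for each fixed $p\in\hat{\Q}$ I would plug in the single-fan formula $H(V_p)(x)=\sum_{n\in\Z}\dot{s}_p(e_n)H(V_{e_n})(x)$ from Theorem \ref{Hilbert-single-shear} at the four points $x\in\{a,b,c,d\}$, and once more commute this inner series with the same finite linear combination. The combination of these two substitutions produces exactly \eqref{eq:shears_of_Hilbert(V)}.

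The only non-formal step is justifying the two interchanges of summation and the four-term linear functional. Both reduce to the pointwise convergence of the series at the four points $a,b,c,d$, which follows from the uniform-on-compacts convergence statements already proven; there is no need to appeal to absolute convergence of the ambient double series (which may fail, as Remark \ref{thm:Hilbert-on-Zygmund} warns). Thus the corollary follows with no additional estimates.
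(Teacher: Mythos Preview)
Your proposal is correct and follows exactly the route the paper takes: the corollary is stated as an immediate consequence of combining the shear-recovery formula \eqref{eq:shear_from_vector_field} with Theorems \ref{thm:Hilbert-on-Zygmund} and \ref{Hilbert-single-shear}, and you have simply written out that combination explicitly. The only minor slip is the phrase ``each individual point of $\hat{\R}$'': the uniform convergence in Theorem \ref{thm:Hilbert-on-Zygmund} is on compact subsets of $\R$, so if one of $a,b,c,d$ equals $\infty$ a separate interpretation is needed, but the paper does not address this either and it does not affect the argument.
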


The above computation of the infinitesimal shear function
$H(\dot{s})$ can be given in terms of the hyperbolic geometry of the
underlining tesselation $\F$. Namely, let $(0,\infty )$ be the
geodesic $e_n$ and let $(b,d)$ be the geodesic whose shear
$H(\dot{s})((b,d))$ we want to compute. Let $a$ and $c$ be the other
two endpoints of the ideal quadrilateral in $\mathbf{H}\setminus
(\F\setminus \{ (b,d)\})$.

Assume first that $a,b,c,d\notin\{ 0,\infty\}$ and let
$V_{0,\infty}$ be given by (\ref{eq:simple_shear_vector_field}).
Then the contribution of $H(V_{0,\infty})$ to $H(\dot{s})((b,d))$ is
$$
\frac{b}{c-b}\log |\frac{c}{b}|+\frac{d}{d-a}\log |\frac{d}{a}|-
\frac{b}{b-a}\log |\frac{b}{a}|-\frac{d}{d-c}\log |\frac{d}{c}|.
$$

We write the above expression in terms of the cross-ratios as
\begin{equation}
\label{eq:single-hilb-shear-formula}
\begin{split}
cr(0,\infty ,c,b)\log |cr(0,b,\infty ,c)|+cr(0,\infty ,a,d)\log
|cr(0,a,\infty ,d)|-\\
cr(0,\infty ,a,b)\log |cr(0,a,\infty ,b)|-cr(0,\infty ,c,d)\log
|cr(0,b,\infty ,c)|.
\end{split}
\end{equation}

Let $\delta_{a,d}$ be the hyperbolic distance between $(0,\infty )$
and $(a,d)$, and similarly for
$\delta_{a,b},\delta_{b,c},\delta_{c,d}$. Then the expression
(\ref{eq:single-hilb-shear-formula}) can be written as
\begin{equation}
\label{eq:contrib_simple_shear_hilbert}
\begin{split}
\sinh^2\Big{(}\frac{\delta_{b,c}}{2}\Big{)}\log\coth^2
\Big{(}\frac{\delta_{b,c}}{2}\Big{)}+
\cosh^2\Big{(}\frac{\delta_{a,d}}{2})
\log\coth^2 \Big{(}\frac{\delta_{a,d}}{2}\Big{)}-\\
\cosh^2\Big{(}\frac{\delta_{a,b}}{2}\Big{)}\log\coth^2
\Big{(}\frac{\delta_{a,b}}{2}\Big{)}-
\cosh^2\Big{(}\frac{\delta_{c,d}}{2}\Big{)}\log\coth^2
\Big{(}\frac{\delta_{c,d}}{2}\Big{)}
\end{split}
\end{equation}

If $a=0$ and $b,c,d\notin\{ 0,\infty\}$, then by similar reasoning
we get that the contribution is
\begin{equation}
\label{eq:contrib_simple_shear_hilbert:a=0}
\begin{split}
\sinh^2\Big{(}\frac{\delta_{b,c}}{2}\Big{)}\log\coth^2
\Big{(}\frac{\delta_{b,c}}{2}\Big{)}+ \log\coth^2
\Big{(}\frac{\delta_{b,d}}{2}\Big{)}-\\
\cosh^2\Big{(}\frac{\delta_{c,d}}{2}\Big{)}\log\coth^2
\Big{(}\frac{\delta_{c,d}}{2}\Big{)}.
\end{split}
\end{equation}

If $d=\infty$ and $a,b,c\notin\{ 0,\infty\}$, then the contribution
is
\begin{equation}
\label{eq:contrib_simple_shear_hilbert_d=infty}
\sinh^2\Big{(}\frac{\delta_{b,c}}{2}\Big{)}\log\coth^2
\Big{(}\frac{\delta_{b,c}}{2}\Big{)}-
\cosh^2\Big{(}\frac{\delta_{a,b}}{2}\Big{)}\log\coth^2
\Big{(}\frac{\delta_{a,b}}{2}\Big{)}.
\end{equation}

If $a=0$ and $d=\infty$, then the contribution is
\begin{equation}
\label{eq:contrib_simple_shear_hilbert:a=0,d=infty}
\cosh^2\Big{(}\frac{\delta_{b,c}}{2}\Big{)}\log\coth^2
\Big{(}\frac{\delta_{b,c}}{2}\Big{)}.
\end{equation}

If $b=0$ and $d=\infty$, then the contribution is
\begin{equation}
\label{eq:contrib_simple_shear_hilbert:b=0,d=infty} \log \tan^2
(\theta_{a,c})
\end{equation}
where $\theta_{a,c}$ is the angle between geodesics $(0,\infty )$
and $(a,c)$.

Note that all the contributions to $H(\dot{s})((b,d))$ are expressed
in terms of the invariants for the positions of the geodesics with
endpoints $a,b,c,d,0,\infty$ and they remain in force when the
geodesic $(0,\infty )$ is replaced by an arbitrary geodesic. Thus we
obtain

\begin{corollary}
\label{cor:shears_of_Hilbert(V)_invariant} Let $\dot{s}:\F\to\R$ be
the infinitesimal shear function of a Zygmund bounded vector field
$V$ on $\hat{\R}$. Then the infinitesimal shear function
$$
H(\dot{s}):\F\to\R
$$
of the vector field $H(V)$ obtained by taking the Hilbert transform
of $V$ is given by
\begin{equation}
\label{eq:shears_of_Hilbert(V)_invariant}
H(\dot{s})((b,d))=\sum_{p\in\hat{Q}}\sum_{n\in\Z}\dot{s}(e_n)\Delta_{b,d}(e_n)
\end{equation}
where $(b,d)\in\F$ is the common boundary side of the two
complementary triangles of $\F$ with vertices $(a,b,d)$ and
$(b,c,d)$, and $e_n$ is a geodesic of the fan $\F_p$ oriented such
that $p$ is the initial point of $e_n$ for $n\geq 1$ and $p$ is the
terminal point of $e_n$ for $n\leq 0$, and $\Delta_{b,d}(e_n)$ is
one of the expressions (\ref{eq:contrib_simple_shear_hilbert}),
(\ref{eq:contrib_simple_shear_hilbert:a=0}),
(\ref{eq:contrib_simple_shear_hilbert_d=infty})
(\ref{eq:contrib_simple_shear_hilbert:a=0,d=infty}),
(\ref{eq:contrib_simple_shear_hilbert:b=0,d=infty}) depending on the
relative positions of $(a,b,c,d)$ with respect to $e_n$.
\end{corollary}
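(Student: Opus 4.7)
My plan is to derive the invariant formula from Corollary \ref{cor:shears_of_Hilbert(V)} by rewriting the elementary contribution of each $H(V_{e_n})$ in terms of quantities that depend only on the relative position of the geodesic $e_n$ and the quadrilateral $(a,b,c,d)$. Since those quantities turn out to be Möbius-invariant, it suffices to compute them when $e_n$ is a fixed normalized geodesic.

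First I would take the formula
\begin{equation*}
H(\dot{s})((b,d))=\sum_{p\in\hat{Q}}\sum_{n\in\Z}\dot{s}_p(e_n)\Delta_{b,d}(e_n),
\end{equation*}
where $\Delta_{b,d}(e_n)$ is the bracketed expression in (\ref{eq:shears_of_Hilbert(V)}), and treat one summand at a time. Normalize by a Möbius transformation so that $e_n=(0,\infty)$; by Möbius invariance of the cross-ratio and of the hyperbolic distance between geodesics, it is enough to verify the identity for this one geodesic. Using the explicit Hilbert transform $HV_{(0,\infty)}(x)=\frac{1}{\pi}x\log|x|$ from (\ref{eq:simple_shear_Hilbert}), a direct computation of the four difference quotients yields
\begin{equation*}
\Delta_{b,d}((0,\infty))=\tfrac{b}{c-b}\log\bigl|\tfrac{c}{b}\bigr|+\tfrac{d}{d-a}\log\bigl|\tfrac{d}{a}\bigr|
-\tfrac{b}{b-a}\log\bigl|\tfrac{b}{a}\bigr|-\tfrac{d}{d-c}\log\bigl|\tfrac{d}{c}\bigr|.
\end{equation*}
I would then rewrite each of the four coefficients as a cross-ratio of the form $cr(0,\infty,\cdot,\cdot)$ and each logarithm as a cross-ratio of the form $\log|cr(0,\cdot,\infty,\cdot)|$, obtaining expression (\ref{eq:single-hilb-shear-formula}).

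Next I would translate each term of (\ref{eq:single-hilb-shear-formula}) into hyperbolic-distance form. The standard identities $cr(0,\infty,x,y)=\cosh^2(\delta/2)$ or $\sinh^2(\delta/2)$ (depending on whether the endpoints of the two geodesics separate each other on the circle) and $|cr(0,x,\infty,y)|=\coth^2(\delta/2)$, where $\delta$ is the hyperbolic distance between the geodesics $(0,\infty)$ and $(x,y)$, convert the four terms exactly into the four summands of (\ref{eq:contrib_simple_shear_hilbert}). Because both the cross-ratios and the pairwise distances are Möbius-invariant, the same formula holds verbatim when $(0,\infty)$ is replaced by any $e_n$ whose endpoints are disjoint from $\{a,b,c,d\}$. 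Substituting into the double sum of Corollary \ref{cor:shears_of_Hilbert(V)} yields (\ref{eq:shears_of_Hilbert(V)_invariant}).

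Finally I would treat the degenerate configurations in which $e_n$ shares an endpoint with the quadrilateral $(a,b,c,d)$. For those cases one either loses one of the four terms in $\Delta_{b,d}$ (because two of the four difference quotients evaluate to $0$ or $1$ without a logarithmic factor), or two factors collapse and a $\tan$-type term replaces a $\coth$-type term. Computing each such specialization for $e_n=(0,\infty)$ with the appropriate coincidences and then invoking Möbius invariance gives the alternative formulas (\ref{eq:contrib_simple_shear_hilbert:a=0}), (\ref{eq:contrib_simple_shear_hilbert_d=infty}), (\ref{eq:contrib_simple_shear_hilbert:a=0,d=infty}), (\ref{eq:contrib_simple_shear_hilbert:b=0,d=infty}). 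The main technical point is to verify that the cross-ratio-to-distance dictionary picks the correct branch ($\sinh$ vs.\ $\cosh$, and the sign inside the log) in each configuration; this is the only place where one must keep careful track of which pairs of endpoints link on $\hat{\R}$. With the single-geodesic identity established in every configuration, the corollary follows immediately by summing over all $p\in\hat{\Q}$ and $n\in\Z$ using Corollary \ref{cor:shears_of_Hilbert(V)}.
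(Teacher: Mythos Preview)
Your proposal is correct and follows essentially the same route as the paper: the paper also starts from Corollary~\ref{cor:shears_of_Hilbert(V)}, normalizes $e_n$ to $(0,\infty)$, computes the four difference quotients from $HV_{(0,\infty)}(x)=\frac{1}{\pi}x\log|x|$, rewrites them as cross-ratios (equation~(\ref{eq:single-hilb-shear-formula})), converts to hyperbolic distances via the $\cosh^2(\delta/2)$, $\sinh^2(\delta/2)$, $\coth^2(\delta/2)$ dictionary, treats the degenerate coincidences case by case, and then invokes M\"obius invariance to replace $(0,\infty)$ by an arbitrary $e_n$. The only point you might want to make explicit, which the paper glosses over, is the passage from the coefficient $\dot{s}_p(e_n)=\tfrac{1}{2}\dot{s}(e_n)$ in Corollary~\ref{cor:shears_of_Hilbert(V)} to $\dot{s}(e_n)$ in the statement here.
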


\section{Fourier coefficients and infinitesimal shear functions}

Let $V:S^1\to\mathbf{C}$ be a Zygmund bounded vector field on the
unit circle of $S^1$. Let $\mathcal{F}$ be the Farey tesselation of
the unit disk $\mathbf{D}$ which is obtained by taking the image of
the Farey tesselation of $\H$ under the M\"obius map
$B:\H\to\mathbf{D}$ which maps $0$, $1$ and $\infty$ onto $1$, $i$
and $-1$. In this section, $V$ is normalized to vanish at $1$, $i$
and $-1$. The infinitesimal shear function
$\dot{s}:\mathcal{F}\to\mathbf{R}$ corresponding to $V$ satisfies
the property (\ref{eq:zygmund-condition-shears}) since all the
considerations are geometric.

We express Fourier coefficients of a Zygmund bounded vector field
$V$ on the unit circle $S^1$ in terms of its infinitesimal shear
function defined on the Farey tesselation $\mathcal{F}$. Parametrize
$S^1$ by associating to each point $z\in S^1$ its angle $\phi =arg
(z)$, where $\phi\in [0,2\pi ]$. Then, for $\phi_0<\phi_1$, the
elementary shear vector field is defined by
\begin{equation*}
V_{\phi_0,\phi_1}(z)=\left\{
\begin{array}l
\frac{(z-e^{i\phi_0})(z-e^{i\phi_1})}{e^{i\phi_0}-e^{i\phi_1}},\ \mbox{for}\ \phi_0<arg(z)<\phi_1\\
0,\ \ \ \ \ \ \ \ \ \mbox{otherwise}
\end{array}
\right.
\end{equation*}

Given a fan $\mathcal{F}_p$ with the tip $p\in S^1$ of the Farey
tesselation $\mathcal{F}$ in $\mathbf{D}$, denote by $V_p$ the
vector field corresponding to a infinitesimal shear function which
agrees with $\frac{1}{2}\dot{s}$ on $\mathcal{F}_p$ and which is
zero on $\mathcal{F}\setminus\mathcal{F}_p$, and $V_p$ is normalized
to be zero at $1$, $i$ and $-1$. Namely,
$$
V_p(z)=\sum_{e_n=(e^{i\phi_0^n},e^{i\phi_1^n})\in\mathcal{F}_p}\frac{1}{2}\dot{s}(e_n)V_{\phi_0^n,\phi_1^n}
(z).
$$

Corollary \ref{cor:zyg_series_estimate} implies that
$$
V(z)=\sum_{p\in\mathcal{F}^0}V_p(z)
$$
where $\mathcal{F}^0$ is the set of all vertices of $\mathcal{F}$
and the convergence is absolute and uniform on $S^1$.

An elementary integration gives the $n$-th Fourier coefficient
$\widehat{V}_{\phi_0,\phi_1}(n)$ of $V_{\phi_0,\phi_1}$ as
\begin{equation}\label{eq:fourier_simple_vect_field}
\begin{split}
\widehat{V}_{\phi_0,\phi_1}(n)=\frac{1}{2\pi
(e^{i\phi_0}-e^{i\phi_1})}\Big{[}\frac{e^{i(2-n)\phi_1}-e^{i(2-n)\phi_0}}{i(2-n)}-
(e^{i\phi_0}+e^{i\phi_1})\times \\ \times
\frac{e^{i(1-n)\phi_1}-e^{i(1-n)\phi_0}}{i(1-n)} +
e^{i(\phi_0+\phi_1)}\frac{e^{-in\phi_1}-e^{-in\phi_0}}{-in}\Big{]}
\end{split}
\end{equation}

We have the following theorem.

\begin{theorem}
\label{thm:Fourier_vect_field} Let $V:S^1\to \mathbf{C}$ be a
Zygmund vector field and let $\dot{s}:\mathcal{F}\to\R$ be the
corresponding infinitesimal shear function. Then the $n$-th Fourier
coefficient $\widehat{V}(n)$ of the vector field $V$ is given by
$$
\widehat{V}(n)=\sum_{p\in\mathcal{F}^0}\widehat{V}_p(n),
$$
where the convergence is absolute and
$$
\widehat{V}_p(n)=\sum_{e_n=(e^{i\phi_0^n},e^{i\phi_1^n})\in\mathcal{F}_p}\frac{1}{2}\dot{s}(e_n)\widehat{V}_{\phi_0^n,
\phi_1^n}(n).
$$
\end{theorem}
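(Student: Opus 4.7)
The plan is to derive the formula from the decomposition $V=\sum_{p\in\mathcal{F}^0}V_p$ provided by Corollary \ref{cor:zyg_series_estimate}, and then to compute each $\widehat{V}_p(n)$ by an inner summation over the geodesics of the fan $\mathcal{F}_p$, mirroring the two-level structure used for the Hilbert transform in Theorem \ref{thm:Hilbert-on-Zygmund} and Theorem \ref{Hilbert-single-shear}.

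First, since Corollary \ref{cor:zyg_series_estimate} gives absolute and uniform convergence of $\sum_p V_p$ to $V$ on the compact space $S^1$, the integral defining $\widehat{V}(n)$ commutes with the sum over $p$ and yields $\widehat{V}(n)=\sum_p\widehat{V}_p(n)$ directly. Absolute convergence of this outer series follows from the estimate $|\widehat{V}_p(n)|\le \|V_p\|_{\infty}|I_p|/(2\pi)$, where $I_p$ is the arc of $S^1$ supporting $V_p$; combined with the geometric arc-length decay and $\|V_p\|_{\infty}$-bounds established in the proof of Proposition \ref{prop:claim2}, this produces a bound uniform in $n$ that is summable over $p\in\mathcal{F}^0$.

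Second, for each fan $\mathcal{F}_p$, I would approximate $V_p$ by partial sums $V_p^N=\sum_{|k|\le N}\tfrac{1}{2}\dot{s}(e_k)V_{\phi_0^k,\phi_1^k}$, which are finite sums of elementary shear vector fields. Each $\widehat{V_p^N}(n)$ is then given by the explicit formula \eqref{eq:fourier_simple_vect_field} applied term-by-term. The goal is to pass to the limit in $N$, producing the inner series in the theorem. The supports of the elementary shears $V_{\phi_0^k,\phi_1^k}$ shrink geometrically toward the tip $p$ as $|k|\to\infty$, so both $\|V_{\phi_0^k,\phi_1^k}\|_{\infty}$ and $|\widehat{V}_{\phi_0^k,\phi_1^k}(n)|$ decay geometrically in $|k|$.

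The main obstacle is the inner convergence, because the shears $\dot{s}(e_k)$ need not be absolutely summable: only the Zygmund-type condition \eqref{eq:zygmund-condition-shears} is available. This is exactly the same obstruction handled in the proof of Theorem \ref{Hilbert-single-shear}, and the same strategy applies. Using the partial-sum bound $\sum_{k=-N}^{N}\dot{s}(e_k)=o(N)$ from Lemma \ref{lemma:o(n)} together with the geometric decay of the supports, an Abel summation (summation-by-parts) argument shows that $V_p^N\to V_p$ in a strong enough sense to permit termwise Fourier transform, and that $\widehat{V_p^N}(n)\to\widehat{V}_p(n)$. Because the Fourier kernel $e^{-in\phi}$ is bounded and smooth, the tail estimates here are in fact cleaner than for the singular Hilbert kernel, so no new analytic input is required beyond what was developed in Section 5. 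Combining the outer identity, the inner identification, and the substitution of \eqref{eq:fourier_simple_vect_field} yields exactly the stated formula.
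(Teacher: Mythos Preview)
Your proposal is correct and follows essentially the same route as the paper: the outer identity comes directly from the uniform (and absolute) convergence of $\sum_p V_p$ in Corollary \ref{cor:zyg_series_estimate}, and the inner identity is obtained by controlling the tail near the tip $p$ via Lemma \ref{lemma:o(n)}, exactly as in the proof of Theorem \ref{Hilbert-single-shear}. One small slip worth noting: the supports of the elementary shears in a fixed fan shrink like $1/|k|$, not geometrically, since the non-tip endpoint $q_k$ approaches $p$ at rate $\sim 1/|k|$; this is harmless because your actual argument correctly defers to the $o(N)$ bound of Lemma \ref{lemma:o(n)} rather than relying on that decay rate.
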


\begin{proof}
Since $\sum_{p\in\mathcal{F}^0}V_p(z)$ is converging uniformly on
$S^1$ to $V(z)$, the first formula of the theorem is immediate. By
Lemma \ref{lemma:o(n)} and the proof of Theorem
\ref{thm:Hilbert-on-Zygmund}, we have that $|V_p(z)|$ in a
neighborhood of $p$ is increasing to infinity slower than the
distance to $p$. This implies that the second formula of the theorem
holds.
\end{proof}

\section{The almost complex structure for Teichm\"uller spaces of finite surfaces}

Let $S$ be a finite area hyperbolic surface with $s>0$ punctures.
The hyperbolic plane $\H$ is the universal covering of $S$ such that
the covering map $\pi :\H\to S$ is a local isometry. Let $G$ be the
group of deck transformations and let $\tau$ be a locally finite
ideal triangulation of $S$. Then $\tau$ lifts to a locally finite
ideal triangulation $\tilde{\tau}$ of $\H$ which is invariant under
$G$.

The Teichm\"uller space $T(S)$ of a Riemann surface $S$ is the
homotopy class of all marked hyperbolic surfaces $f:S\to X$ up to
post-composition by hyperbolic isometries. We denote the
Teichm\"uller class of $f:S\to X$ by $[X,f]$. The Teichm\"uller
space $T(S)$ maps into the space of functions from the set of
geodesics $|\tau |$ of the triangulation $\tau$ into the real
numbers $\R$ as follows. Let $c_1,\ldots ,c_s$ be the set of
punctures of $S$. Given a marked hyperbolic surface $f:S\to X$, we
replace each curve in $| f(\tau )|$ with the geodesic of $X$
homotopic to it relative punctures. We obtain an ideal geodesic
triangulation of the marked hyperbolic surface $X$ homotopic to
$f(\tau )$. Then we assign to the marked hyperbolic surface $X$ the
function from $|\tau |$ into $\R$ which maps each edge of $\tau$ to
its shear with respect to $f(\tau )$. In this fashion, we obtain an
injective map
$$
T(S)\to\R^{|\tau |}
$$
which is a real analytic diffeomorphism onto its image (see
\cite{Th}, \cite{Pe1}). The image of $T(S)$ consists of all
functions $s:|\tau |\to\R$ such that for each puncture $c_i$,
$i=1,\ldots ,s$, we have
$$
\sum_{j=1}^{k(i)}s(e_j^{c_i})=0
$$
where $e_j^{c_i}$, $j=1,\ldots ,k(i)$, is the set of geodesics from
$\tau$ having an ideal endpoint the cusp $c_i$ such that a single
geodesic appears twice if and only if both of its endpoint are
$c_i$.

A tangent vector to $T(S)$ at point $[X,f]\in T(S)$ is also
described by a function in $\R^{|\tau |}$ as follows. Namely, a
differentiable (in $t$) path of shear functions $s_t:|\tau |\to\R$,
$t\in (-\epsilon ,\epsilon )$, such that $s_0:|\tau |\to\R$ equals
the shear function of $[X,f]$ describes a differentiable path in
$T(S)$ through the point $[X,f]$. The derivative
$$
\frac{d}{dt}s_t|_{t=0}=\dot{s}:|\tau |\to\R
$$
represents a tangent vector $v\in T_{[X,f]}T(S)$. In general, a
function $\dot{s}\in\R^{|\tau |}$ represents a tangent vector at
$T_{[X,f]}T(S)$ if and only if for each puncture $c_i$ it satisfies
$$
\sum_{j=1}^{k(i)}\dot{s}(e_j^{c_i})=0
$$
where $e_j^{c_i}$, $j=1,\ldots ,k(i)$, is the set of geodesics from
$\tau$ having an ideal endpoint the cusp $c_i$ with possible
repeating if both ends are at the same cusp as before.

Lift the triangulation $\tau$ of $S$ to a triangulation
$\tilde{\tau}$ of $\H$. We normalize the covering map such that
$\tilde{\tau}$ contains the geodesic $(0,\infty )$ and denote by
$a_{-1}<0$ the endpoint of the geodesic in the fan of $\tilde{\tau}$
with tip $\infty$ adjacent to $(0,\infty )$ on the left. We choose
the complementary triangle $\Delta_0$ with vertices $0$, $a_{-1}$
and $\infty$ to be the reference triangle for $\tilde{\tau}$ and
orient all geodesics of $\tilde{\tau}$ to the left as seen from
$\Delta_0$. Given a vertex $p\in\hat{\R}$ of $\tilde{\tau }$, let
$\{ e_n^p\}_{n\in\Z}$ be the set of edges of $\tilde{\tau}$ which
have one endpoint $p$ such that $e_n^p$ is adjacent to $e_{n+1}^p$
for all $n\in\Z$, and that $e_0^p$ has the initial point $p$. The
set $\{ e_n^p\}_{n\in\Z}=\tilde{\tau}_p$ is said to be a fan of
$\tilde{\tau}$ with tip $p$. Then, a map
$$
\tilde{s}:|\tilde{\tau} |\to\R
$$
represent a point in $T(S)$ if and only if $\tilde{s}$ is invariant
under $G$ and for each vertex $p\in\tilde{\tau}_0$ we have
$$
\sum_{i=1}^{n(p)}\tilde{s}(e_{j+i}^p)=0
$$
where $e_{j+1}^p$ is identified with $e_{j+n(p)+1}^p$ by an element
of $G$ and $j\in\mathbf{Z}$ is arbitrary. We choose $n(p)$ to be the
smallest positive number such that $e_{j+1}^p$ is identified with
$e_{j+n(p)+1}^p$ by an element of $G$. A function
$$
\dot{\tilde{s}}:|\tilde{\tau} |\to\R
$$
represent a tangent vector to a point in $T(S)$ if and only if
$\dot{\tilde{s}}$ is invariant under $G$ and for each vertex
$p\in\tilde{\tau}_0$ we have
$$
\sum_{i=1}^{n(p)}\dot{\tilde{s}}(e_{j+i}^p)=0
$$
where $e_{j+1}^p$ is identified with $e_{j+n(p)+1}^p$ by an element
of $G$, and again we assume that $n(p)$ is the smallest such number.
Note that $\dot{\tilde{s}}$ defines a tangent vector at an arbitrary
point of $T(S)$.

Our goal in this section is to compute the Hilbert transform in
terms of $\dot{\tilde{s}}:|\tilde{\tau} |\to\R$. Let $\Delta_0$ be
the reference complementary triangle to $\tilde{\tau}$ whose ideal
vertices are $0$, $a_1$ and $\infty$. Let $\{
e_n^{\infty}\}_{n\in\Z}$ be the fan of $\tilde{\tau}$ with tip
$\infty$ such that $e_0$ has initial point $\infty$ and terminal
point $0$, $e_1$ is to the left of $e_0$ and $e_n$ is adjacent to
$e_{n+1}$ for all $n\in\Z$. Let $a_n$ be the endpoint of $e_n$
different from $\infty$. Let $V_{\infty}$ be the vector field
normalized to be zero at $0$, $a_1$ and $\infty$ whose shear
function agree with $\dot{\tilde{s}}$ on $\tilde{\tau}_{\infty}$ and
equals zero on all other geodesics of $\tilde{\tau}$. The lemma
below proves that $V_{\infty}$ is a Zygmund vector field.

\begin{lemma}
\label{lem:single_shear_Zygmund} Let $n\geq 1$ be a fixed integer
and let $\dot{\tilde{s}}:\tilde{\tau}_{\infty}\to\R$ be a function
such that
$$
\dot{\tilde{s}}(e_j^{\infty})=\dot{\tilde{s}}(e_{j+n}^{\infty})
$$
and
\begin{equation}
\label{eq:sum_shears=0}
\sum_{i=j}^{j+n-1}\dot{\tilde{s}}(e_i^{\infty})=0
\end{equation}
for all $j\in\Z$. Let $\dot{\tilde{s}}_N:\tilde{\tau}_{\infty}\to\R$
be defined by
$\dot{\tilde{s}}_N(e_i^{\infty})=\dot{\tilde{s}}(e_i^{\infty})$ if
$-Nn\leq i\leq Nn-1$, and $\dot{\tilde{s}}_N(e_i^{\infty})=0$
otherwise. Let $V_{\infty}^N$ be the vector field corresponding to
$\dot{\tilde{s}}_N$ and let $V_{\infty}$ be the vector field
corresponding to $\dot{\tilde{s}}$. Then $V_{\infty}$ and
$V_{\infty}^N$ are Zygmund vector fields whose Zygmund constants are
bounded by some $C_3>0$ for all $N\in\mathbf{N}$.
\end{lemma}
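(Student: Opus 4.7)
The plan is to verify that the shear functions of $V_\infty$ and $V_\infty^N$ satisfy the Cesaro-type bound (\ref{eq:zygmund-condition-shears}) uniformly on every fan of $\tilde{\tau}$ with a constant $C_3$ independent of $N$; then the Zygmund conclusion follows from the natural extension of Theorem~\ref{thm:zygmund-parametrization-shears} and Proposition~\ref{prop:Zyg_bdd_uniform_in_shears} to the triangulation $\tilde{\tau}$.

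For any fan $\tilde{\tau}_p$ with tip $p\neq\infty$, at most one of its edges lies in $\tilde{\tau}_\infty$, namely the unique edge joining $p$ to $\infty$ when such an edge exists. Hence at most one nonzero term appears on the left side of (\ref{eq:zygmund-condition-shears}) for this fan, and it is bounded in absolute value by $\|\dot{\tilde{s}}\|_\infty$. By the $n$-periodicity hypothesis, $\dot{\tilde{s}}$ takes only $n$ distinct values, so $\|\dot{\tilde{s}}\|_\infty<\infty$, and truncation cannot enlarge it, so the same bound holds for $\dot{\tilde{s}}_N$ uniformly in $N$.

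The essential case is the fan $\tilde{\tau}_\infty$ itself. Write $a_i:=\dot{\tilde{s}}(e_i^\infty)$. By (\ref{eq:sum_shears=0}) and $n$-periodicity, the partial sums $A_M:=\sum_{i=0}^{M}a_i$ satisfy $A_{M+n}=A_M$ for every $M\in\Z$, so $\{A_M\}$ takes only $n$ distinct values and $|A_M|\le(n-1)\|\dot{\tilde{s}}\|_\infty$; the analogous bound holds for the partial sums starting from $i=-1$ and running in the negative direction. A standard Abel-summation identity gives
$$
\sum_{j=0}^{k-1}(k-j)\,a_{m+j}=\sum_{j=0}^{k-1}(a_m+a_{m+1}+\cdots+a_{m+j}),
$$
and its analogue on the negative side implies that the bracketed expression on the left of (\ref{eq:zygmund-condition-shears}) equals $\tfrac{1}{k}$ times a sum of at most $2k$ such partial sums, each bounded by $(n-1)\|\dot{\tilde{s}}\|_\infty$. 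This yields a uniform bound $C_3=C_3(n,\|\dot{\tilde{s}}\|_\infty)$ independent of $m$ and $k$.

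The same argument applies verbatim to $\dot{\tilde{s}}_N$: the corresponding partial sums $A_M^N$ agree with $A_M$ while the index $M$ lies inside the truncation window $[-Nn,\,Nn-1]$, and outside the window they equal a boundary partial sum that sums over complete $n$-periods and hence vanishes. Thus $|A_M^N|\le(n-1)\|\dot{\tilde{s}}\|_\infty$ uniformly in $N$, and the Abel-summation estimate delivers the same constant $C_3$. The only technical obstacle is the routine bookkeeping of Abel summation; the essential conceptual input is that mean-zero periodic sequences have uniformly bounded partial sums, a property preserved by truncation at period boundaries.
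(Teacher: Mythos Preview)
Your Abel-summation argument is correct and nicely isolates the key combinatorial fact: a mean-zero $n$-periodic sequence has uniformly bounded partial sums, and truncation at period boundaries preserves this. However, the step where you invoke ``the natural extension of Theorem~\ref{thm:zygmund-parametrization-shears} and Proposition~\ref{prop:Zyg_bdd_uniform_in_shears} to the triangulation $\tilde{\tau}$'' is a genuine gap. Those results are proved for the Farey tesselation $\F$, whose fan at $\infty$ has break points at the integers; the derivation of condition~(\ref{eq:zygmund-condition-shears}) and the proof of Proposition~\ref{prop:claim1} both use this equal spacing. Here $\tilde{\tau}$ is the lift of an \emph{arbitrary} ideal triangulation of $S$: its fan at $\infty$ has break points $a_i$ satisfying $a_{i+n}=a_i+k$, but the $n$ spacings $a_1-a_0,\ldots,a_n-a_{n-1}$ within a period are in general distinct. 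The index-based Ces\`aro condition you verify is therefore not literally the condition that implies the Zygmund bound for $V_\infty$; one must first show that quasi-uniformly spaced break points suffice, and carrying that out (adapting Proposition~\ref{prop:claim1} to bounded spacing ratios) is essentially the content of the lemma rather than a prerequisite.

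The paper avoids this circularity by arguing directly and elementarily: it uses (\ref{eq:sum_shears=0}) to show that the contribution of each full period to the slope cancels (equation~(\ref{eq:sum_inv_part_general})), so $|V_\infty(x)|$ grows at most linearly and $|V_\infty^N(x)|$ is bounded; it then proves outright Lipschitz continuity (stronger than Zygmund) by treating $t\le k$ via the finitely many slope jumps in one period and $t>k$ via the linear growth estimate. Your bounded-partial-sums observation is exactly the mechanism behind the paper's growth bound, so the two arguments share the same conceptual core---but the paper's direct route does not depend on any unproved extension of the earlier machinery.
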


\begin{proof}
Let $g(x)=x+k$, for $k>0$, be a generator of the subgroup of $G$
fixing $\infty$. Then we have $a_i+k=a_{i+n}$ for all $i\in\Z$ by
the invariance of $\tilde{\tau}$ under $G$. We normalize
$V_{\infty}$ and $V_{\infty}^N$ to be zero on the interval
$[a_{-1},a_0=0]$. By (\ref{eq:sum_shears=0}), the contribution of
$$
\dot{\tilde{s}}(e_0^{\infty})(x-a_0)+\dot{\tilde{s}}(e_1^{\infty})(x-a_1)+
\cdots +\dot{\tilde{s}}(e_{n-1}^{\infty})(x-a_{n-1})
$$
is equal to
\begin{equation}
\label{eq:vect_on_inv_part}
-\sum_{i=0}^{n-1}\dot{\tilde{s}}(e_{i}^{\infty})a_i.
\end{equation}

Moreover, for $j>0$ and $j\equiv 0$ mod $n$, we have
\begin{equation}
\label{eq:sum_inv_part_general} \sum_{i=0}^{n-1}
\dot{\tilde{s}}(e_{j+i}^{\infty})(x-a_{j+i})=
\sum_{i=0}^{n-1}\dot{\tilde{s}}(e_{i}^{\infty})(x-\frac{j}{n}k-a_{i})=
-\sum_{i=0}^{n-1}\dot{\tilde{s}}(e_i^{\infty})a_i
\end{equation}
because
$\dot{\tilde{s}}(e_{j+i}^{\infty})=\dot{\tilde{s}}(e_i^{\infty})$
and $a_{j+i}=a_i+\frac{j}{n}k$ by the invariance of
$\dot{\tilde{s}}$ and $\tilde{\tau}$ under $G$.

If $j<0$ and $j\equiv 0$ mod $n$, then, in a similar fashion, we
obtain
$$
\sum_{i=0}^{n-1}
\dot{\tilde{s}}(e_{j-1-i}^{\infty})(x-a_{j-1-i})=-\sum_{i=0}^{n-1}\dot{\tilde{s}}(e_{-i-1}^{\infty})a_{-i-1}.
$$

Define
$$C=\max_{0<i_0<n}\{\sup_{x\in [a_{i_0},a_{i_0+1}]}|\sum_{i=0}^{i_0}\dot{\tilde{s}}(e_{i}^{\infty})(x-a_{i})|+
\sup_{x\in
[a_{-i_0-1},a_{-i_0}]}|\sum_{i=1}^{i_0}\dot{\tilde{s}}(e_{-i}^{\infty})(x-a_{-i})|\}$$

The above estimates give that
\begin{equation}
\label{eq:bound_V_invariant_shear} |V_{\infty}(x)|\leq
C+\frac{|x|}{k}C_1
\end{equation}
and
\begin{equation}
\label{eq:bound_V_N_invariant_shear} |V_{\infty}^N(x)|\leq C+C_1N
\end{equation}
 for all $x\in\R$, where $C$ is defined above and $$C_1=\max\{
|\sum_{i=0}^{n-1}\dot{\tilde{s}}(e_i^{\infty})a_i|,
|\sum_{i=0}^{n-1}\dot{\tilde{s}}(e_{-i-1}^{\infty})a_{-i-1}|\}.$$

We show that $V_{\infty}(x)$ and $V_{\infty}^N(x)$ are Lipschitz
continuous with the constants bounded independently of $N$. Let
$x\in\R$ and $t>0$. By adding a linear term and by the invariance of
$V_{\infty}(x)$ under the action of $g(x)=x+k$,  we assume that
$0\leq x\leq k$.

If $t\leq k$, then an easy argument shows that
$$
|V_{\infty}^N(x+t)-V_{\infty}^N(x)|,
|V_{\infty}(x+t)-V_{\infty}(x)|\leq C_2t
$$
and
$$
|V_{\infty}^N(x)-V_{\infty}^N(x-t)|,
|V_{\infty}(x)-V_{\infty}(x-t)|\leq C_2t,
$$
where $C_2=\max\{
1,\sum_{i=0}^{n-1}|\dot{\tilde{s}}(e_i^{\infty})|\}$.

Assume that $t>k$. Then (\ref{eq:bound_V_invariant_shear}) and
(\ref{eq:bound_V_N_invariant_shear}) imply that
$$
|V_{\infty}^N(x+t)-V_{\infty}^N(x)|,|V_{\infty}(x+t)-V_{\infty}(x)|\leq
2\frac{C_1+C_2}{k}t
$$
and
$$
|V_{\infty}^N(x)-V_{\infty}^N(x-t)|,|V_{\infty}(x)-V_{\infty}(x-t)|\leq
2\frac{C_1+C_2}{k}t,
$$
where $C_1,C_2$ are as above. This implies that $V_{\infty}$ and
$V_{\infty}^N$ are Zygmund bounded with bound
$C_3=4(\frac{C_1+C_2}{k}+C_2)$.
\end{proof}

Since $V_{\infty}$ and $V_{\infty}^N$ have their Zygmund norms
bounded independently of $N$, it follows that their cross-ratio
norms are uniformly bounded as well. Let $\tilde{\tau}^0$ be the set
of vertices of $\tilde{\tau}$. This implies that $V_p$ and $V_p^N$
are Zygmund bounded with Zygmund bounds independent of $N$ for all
$p\in\tilde{\tau}^0$, where $V_p^N,V_p$ are defined analogous to
$V_{\infty}^N,V_{\infty}$.

\begin{theorem}
\label{thm:inv_shears_vect_field} Let $S$ be a finite area punctured
hyperbolic Riemann surface equipped with an ideal geodesic
triangulation $\tau$. Let $\tilde{\tau}=\{ e_n\}_{n\in\mathbf{N}}$
be the lift to the hyperbolic plane $\H$ of $\tau$ and let $\{
V_{e_n}\}_{n\in\mathbf{N}}$ be the elementary vector fields
corresponding to the geodesics $e_n$ of $\tilde{\tau}$. Let $V(x)$
be a Zygmund vector field on $\hat{\R}$ representing an
infinitesimal deformation of $S$ and let
$\dot{\tilde{s}}:\tilde{\tau}\to\R$ be the corresponding shear
function. Then the series
\begin{equation} \label{eq:sum_zygmund_invariant_shears}
\sum_{n\in \mathbf{N}}\dot{\tilde{s}}(e_n)V_{e_n}(x)
\end{equation}
converges to $V(x)$ uniformly on compact subsets of $\R$.
\end{theorem}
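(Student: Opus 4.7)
My plan is to reduce the single-sum convergence statement to two technical results already established in the excerpt: the fan-decomposition of a general Zygmund vector field from Corollary \ref{cor:zyg_series_estimate}, and the uniform Zygmund bound for periodic fans at cusps from Lemma \ref{lem:single_shear_Zygmund}. First, by Corollary \ref{cor:zyg_series_estimate} applied to $V$ (after conjugating by a suitable M\"obius map to bring $\tilde{\tau}$ into the standard setup), one has
\[
V(x) = \sum_{p \in \tilde{\tau}^0} V_p(x),
\]
with absolute and uniform convergence on compact subsets of $\R$, where each $V_p$ is the normalized Zygmund vector field whose infinitesimal shear function agrees with $\tfrac{1}{2}\dot{\tilde{s}}$ on the fan $\tilde{\tau}_p$ and vanishes on $\tilde{\tau}\setminus\tilde{\tau}_p$.

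Next I would show that, for each vertex $p \in \tilde{\tau}^0$, the fan-contribution $V_p$ is itself the uniform limit on compacta of a single convergent series
\[
V_p(x) = \sum_{i \in \Z} \tfrac{1}{2}\dot{\tilde{s}}(e_i^p) V_{e_i^p}(x).
\]
After conjugating the fan $\tilde{\tau}_p$ to the fan at $\infty$, the $G$-invariance of $\dot{\tilde{s}}$ makes the shear values periodic along the fan with period $n(p)$, and the cusp condition $\sum_{i=j}^{j+n(p)-1}\dot{\tilde{s}}(e_i^p)=0$ is in force. Lemma \ref{lem:single_shear_Zygmund} then supplies Zygmund norms for the truncations $V_p^N$ that are bounded independently of $N$, hence uniformly bounded cross-ratio norms; so $\{V_p^N\}$ is a normal family. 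Pointwise convergence of $V_p^N$ to $V_p$ on a dense subset (say $\hat{\Q}$) follows from a direct calculation: for $x$ in a fixed compact set, once the truncation window contains sufficiently many periods, each additional period contributes a constant that the cusp condition forces to cancel within the normalization rather than accumulate as a running drift. Normality upgrades this to uniform convergence on compacta.

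Finally, combining the outer sum over $p$ with the inner sum over $i$, and using that each geodesic of $\tilde{\tau}$ belongs to exactly two fans (once per endpoint) so that the factor $\tfrac{1}{2}$ is absorbed by the double counting, the double sum collapses to the single sum $\sum_n \dot{\tilde{s}}(e_n) V_{e_n}(x) = V(x)$ with the required uniform convergence on compacta, where the enumeration $\{e_n\}$ is the natural one arising from the fan decomposition. The main obstacle is the within-fan step: Lemma \ref{lem:single_shear_Zygmund} delivers only uniform Zygmund bounds, so promoting these to uniform convergence requires the normal-family argument together with the pointwise cancellations produced by the cusp condition. It is precisely this cusp condition, absent in the general Zygmund setting, that permits a single (rather than double) sum in the cofinite-group case.
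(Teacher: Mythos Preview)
Your overall strategy---fan decomposition for the outer sum, Lemma \ref{lem:single_shear_Zygmund} for the inner truncations, then recombine---matches the paper's. However, there is a genuine gap in your first step. Corollary \ref{cor:zyg_series_estimate} is proved for the Farey tesselation $\F$, and your proposed fix (``conjugating by a suitable M\"obius map to bring $\tilde{\tau}$ into the standard setup'') does not work: the lift $\tilde{\tau}$ of an ideal triangulation of a cofinite punctured surface is in general \emph{not} a M\"obius image of $\F$, since its shear function need not vanish identically. The exponential-decay estimate in Proposition \ref{prop:claim2} relies on the specific constant $\delta=\log(1+\sqrt{2})^2$ coming from the combinatorics of $\F$; for an arbitrary ideal triangulation there is no such a priori bound. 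What rescues the situation---and this is exactly what the paper supplies---is the cofiniteness of $G$: invariance under a cofinite group forces a uniform positive lower bound $\delta>0$ on the distance between any two fans of $\tilde{\tau}$ that share no edge. Once you have that, the argument of Proposition \ref{prop:claim2} goes through with the new $\delta$, and the fan decomposition converges.

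The paper also organizes the endgame more directly than your two-step iterated limit. Rather than first proving $V=\sum_p V_p$, then separately $V_p=\lim_N V_p^N$, and finally arguing that the double limit collapses to a single sum, the paper builds the finite partial sums $\sum_{i=1}^{s}V_{p_i}^N$ in one stroke: given $\epsilon>0$, it selects the finitely many tips $p_1,\dots,p_s$ whose fans contain edges of Euclidean size $\geq\epsilon$ meeting the rays $r_x$ for $x\in I$, and then chooses $N$ so that the remaining edges in those fans are also $\epsilon$-small. This finite sum then agrees with $V$ on an $\epsilon$-net of $I$, and the uniform Zygmund bounds from Lemma \ref{lem:single_shear_Zygmund} upgrade $\epsilon$-net agreement to uniform closeness on $I$. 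Since each $V_{p_i}^N$ is already a finite linear combination of elementary shear vector fields, this is precisely a partial sum of the single series \eqref{eq:sum_zygmund_invariant_shears}, and the combination step you worry about is absorbed into the construction. Your normal-family argument for the inner convergence is correct, but the paper's $\epsilon$-net route avoids having to justify the interchange of limits.
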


\begin{proof}
We follow the idea in the proof of Theorem
\ref{thm:zygmund-parametrization-shears}. Let $\{
p_i\}_{i\in\hat{\Q}}$ be the set of vertices of $\tilde{\tau}$.
There exists $\delta
>0$, such that for any two fans $\tilde{\tau}_{p_1}$ and
$\tilde{\tau}_{p_2}$ which do not have a common geodesic the minimum
distance between the geodesics of $\tilde{\tau}_{p_1}$ and
$\tilde{\tau}_{p_2}$ is at least $\delta$. This is a direct
consequence of the invariance of $\tilde{\tau}$ under a co-finite
group $G$.

Let $I\subset\R$ be a closed, bounded interval in $\R$. For $x\in
I$, let $r_x$ denote a geodesic ray that connects $i\in\H$ to $x\in
I\subset\R$. Given $\epsilon >0$, there exists a finite set of tips
$\{ p_1,\ldots ,p_s\}\subset\tilde{\tau}^0$, $s=s(\epsilon )$, such
that if a geodesic of the fan with a tip $p\in\tilde{\tau}^0$
intersects $r_x$ for some $x\in I$ then either all geodesics in
$\tilde{\tau}_p$ have their Euclidean lengths less than $\epsilon$,
or $p\in\{ p_1,\ldots ,p_s\}$ and all fans whose geodesics intersect
$r_x$ before the geodesics of $\tilde{\tau}_p$ are also in $\{
p_1,\ldots ,p_s\}$. Let $n(p_i)$ stand for the number of elements in
$\tilde{\tau}_{p_i}$ that form a fundamental set for the action by
the subgroup of $G$ that stabilizes $p_i$. There exists
$N=N(\epsilon ,s)\in\mathbf{N}$ such that all geodesics $e_j^{p_i}$
of $\tilde{\tau}_{p_i}$, $i=1,\ldots ,s$, for either $j>Nn(p_i)$ or
$j<-Nn(p_i)$ have their Euclidean sizes less that $\epsilon$.

By our construction, the sum
$$
\sum_{i=1}^sV_{p_i}^N(x)
$$
agrees with $V(x)$ on an $\epsilon$-net for $I$. Since the Zygmund
norms of each $V_p^N$ are uniformly bounded for $p\in\tilde{\tau}^0$
and $N\in\mathbf{N}$ and since the ideal triangulation has positive
minimum distance between geodesics of two disjoint fans, it follows
that the above sum converges uniformly on $I$ to $V(x)$ as $\epsilon
\to 0$ similar to proof of Theorem
\ref{thm:zygmund-parametrization-shears}. To finish the proof, it is
enough to note that each $V_p^N$ is a finite sum of elementary shear
vector fields $V_e$ for $e\in\tilde{\tau}$ with the coefficients
$\dot{\tilde{s}}(e)$.
\end{proof}

We use the above theorem to establish a formula for the Hilbert
transform in terms of infinitesimal shear function of Zygmund vector
field.

\begin{theorem}
\label{thm:Hilbert_shear_invariant} Let $V:\R\to\R$ be a Zygmund
vector field which is invariant under a co-finite group $G$ and let
$\tilde{\tau}$ be an ideal triangulation of $\H$ invariant under
$G$. Let $\dot{\tilde{s}}:\tilde{\tau}\to\R$ be the infinitesimal
shear function of $V$. Then the Hilbert transform $H(V)$ is given by
$$
H(V)(x)=\sum_{n\in\N}\dot{\tilde{s}}(e_n)H(V_{e_n})(x)
$$
where $\{e_n\}_{n\in\N}$ are the geodesics of $\tilde{\tau}$ given
in a sequence and $H(V_{e_n})$ is defined by
(\ref{eq:simple_shear_Hilbert3}).
\end{theorem}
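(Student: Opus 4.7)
The plan is to imitate the proof of Theorem \ref{thm:Hilbert-on-Zygmund}, but to exploit the $G$-invariance so that a single series (rather than a double series over fans) suffices, with Theorem \ref{thm:inv_shears_vect_field} playing the role that Corollary \ref{cor:zyg_series_estimate} played in the Farey case.

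First, I would fix an enumeration $\{e_n\}_{n\in\N}$ of the edges of $\tilde{\tau}$ coming from the proof of Theorem \ref{thm:inv_shears_vect_field}: list, in order, the edges of $\tilde{\tau}_{p_i}^N$ for $i=1,\ldots,s$, where $s=s(\epsilon)$ and $N=N(\epsilon)$ grow to infinity along a sequence $\epsilon_m\to 0$. Set $V^{(m)}(x):=\sum_{n=1}^{m}\dot{\tilde{s}}(e_n)V_{e_n}(x)$. Theorem \ref{thm:inv_shears_vect_field} gives $V^{(m)}\to V$ uniformly on compact subsets of $\R$, and, modulo finitely many leftover terms at the boundary of the current block (which are uniformly bounded by a multiple of $\|\dot{\tilde{s}}\|_\infty$), $V^{(m)}$ coincides with a partial sum of the form $\sum_{i=1}^{s}V_{p_i}^{N}$.

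Second, I would establish that $\{V^{(m)}\}_{m\in\N}$ has uniformly bounded cross-ratio norms. Each $V_{p_i}^{N}$ is Zygmund bounded with a constant independent of $N$ by Lemma \ref{lem:single_shear_Zygmund}. Because $G$ is co-finite, there is a positive lower bound $\delta>0$ on the distance between any two disjoint fans of $\tilde{\tau}$; exactly as in the geometric decay estimate used in the proof of Proposition \ref{prop:claim2} and Theorem \ref{thm:inv_shears_vect_field}, this forces the vector fields $V_{p_i}^{N}$ whose defining fans meet a given compact set only deep in their tails to be uniformly small on that set. Combined with the fact that the truncated shear function restricted to the chosen edges still verifies an inequality of the form (\ref{eq:zygmund-condition-shears}) with a constant governed by that of $\dot{\tilde{s}}$, Proposition \ref{prop:Zyg_bdd_uniform_in_shears} yields a Zygmund bound for $V^{(m)}$ depending only on $\dot{\tilde{s}}$ and $\delta$, not on $m$.

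Third, I would apply Lemma \ref{lem:conv_of_Hilb_transform}: the uniform convergence $V^{(m)}\to V$ on compact subsets together with the uniform cross-ratio bound implies $H(V^{(m)})\to H(V)$ uniformly on compact subsets of $\R$. By linearity of the Hilbert transform,
\begin{equation*}
H(V^{(m)})(x)=\sum_{n=1}^{m}\dot{\tilde{s}}(e_n)H(V_{e_n})(x),
\end{equation*}
so passing to the limit $m\to\infty$ produces the claimed identity, with uniform convergence on compact subsets of $\R$.

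The main obstacle is the uniform Zygmund bound in step two. A naive estimate by summing the Lemma \ref{lem:single_shear_Zygmund} bound for each $V_{p_i}^{N}$ gives a constant that grows with the number of fans included, so one really needs the quantitative geometric decay from the separation $\delta$ together with Proposition \ref{prop:Zyg_bdd_uniform_in_shears} to conclude that the shear function obtained by truncating $\dot{\tilde{s}}$ still has an associated Zygmund vector field with a uniformly controlled norm. The single-sum (as opposed to double-sum) character of the final formula is precisely what the $G$-invariance, via Theorem \ref{thm:inv_shears_vect_field}, buys us over Theorem \ref{thm:Hilbert-on-Zygmund}.
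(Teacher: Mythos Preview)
Your proposal is correct and follows essentially the same route as the paper: use the partial sums $\sum_{i=1}^{s}V_{p_i}^{N}$ from the proof of Theorem \ref{thm:inv_shears_vect_field}, obtain uniform Zygmund bounds via Lemma \ref{lem:single_shear_Zygmund} together with Proposition \ref{prop:Zyg_bdd_uniform_in_shears}, apply Lemma \ref{lem:conv_of_Hilb_transform}, and finish by linearity of $H$ on finite sums of elementary shear vector fields. Your identification of the uniform Zygmund bound as the main point, and your remark that it requires Proposition \ref{prop:Zyg_bdd_uniform_in_shears} rather than a naive summation over fans, matches exactly what the paper does.
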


\begin{proof}
We established that the quantity
$$
\sum_{i=1}^sV_{p_i}^N(x)
$$
in the proof of Theorem \ref{thm:inv_shears_vect_field} converges
 uniformly on compact subsets of $\R$ and that each $V_{p_i}^N$ is Zygmund
bounded with uniformly bounded Zygmund norms. Then Proposition
\ref{prop:Zyg_bdd_uniform_in_shears} implies that
$\sum_{i=1}^sV_{p_i}^N(x)$ have uniformly bounded Zygmund norms.
Then Lemma \ref{lem:conv_of_Hilb_transform} implies that
$$
H(\sum_{i=1}^sV_{p_i}^N(x))=\sum_{i=1}^sH(V_{p_i}^N)(x)\to H(V)(x)
$$
as $\epsilon\to 0$. The theorem follows because each $V_{p_i}^N$ is
a fiite linear combination of simple shear vector fields.
\end{proof}

We also obtain an expression of the infinitesimal shear function for
the Hilbert transform similarly to the case when Zygmund vector
field is not invariant.

\begin{corollary}
\label{cor:shear_of_Hilbert_invariant} Let
$\dot{\tilde{s}}:\tilde{\tau}\to\R$ be the infinitesimal shear
function of a Zygmund vector field $V$ invariant under a co-finite
group $G$, where $\tilde{\tau}$ is an ideal triangulation of $\H$
invariant under $G$. Then the infinitesimal shear function
$$
H(\dot{\tilde{s}}):\tilde{\tau}\to\R
$$
of the vector field $H(V)$ obtained by taking the Hilbert transform
of $V$ is given by
\begin{equation}
\label{eq:shears_of_Hilbert(V)_invariant}
H(\dot{\tilde{s}})((b,d))=\sum_{n\in\mathbf{N}}\dot{\tilde{s}}(e_n)\Delta_{b,d}(e_n)
\end{equation}
where $(b,d)\in\F$ is the common boundary side of the two
complementary triangles of $\F$ with vertices $(a,b,d)$ and
$(b,c,d)$, $\tilde{\tau}=\{e_n\}_{n\in\mathbf{N}}$, and
$\Delta_{b,d}(e_n)$ is one of the expressions
(\ref{eq:contrib_simple_shear_hilbert}),
(\ref{eq:contrib_simple_shear_hilbert:a=0}),
(\ref{eq:contrib_simple_shear_hilbert_d=infty}),
(\ref{eq:contrib_simple_shear_hilbert:a=0,d=infty}),
(\ref{eq:contrib_simple_shear_hilbert:b=0,d=infty}) depending on the
relative positions of $(a,b,c,d)$ with respect to $e_n$.
\end{corollary}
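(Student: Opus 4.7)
The plan is to reduce the corollary to Theorem \ref{thm:Hilbert_shear_invariant} by applying the first variation formula (\ref{eq:shear_from_vector_field}) to the Hilbert transform $H(V)$ on each edge $(b,d)\in\tilde{\tau}$, and then to recognize that the contribution of each term $\dot{\tilde{s}}(e_n) H(V_{e_n})$ is precisely one of the geometric expressions $\Delta_{b,d}(e_n)$ already computed in Section 6.

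First I would fix an edge $(b,d)\in\tilde{\tau}$ bounded by the complementary triangles with third vertices $a$ and $c$. Using a M\"obius change of coordinates if necessary, we may normalize so that $a,b,c,d$ are finite points of $\R$ with $a<b<c<d$; this is legitimate since cross-ratios (and thus all quantities in (\ref{eq:shear_from_vector_field})) are M\"obius invariant, and since the hyperbolic invariants $\delta_{\cdot,\cdot}$ and $\theta_{\cdot,\cdot}$ appearing in $\Delta_{b,d}(e_n)$ are intrinsic. By Theorem \ref{thm:Hilbert_shear_invariant}, the Hilbert transform admits the expansion
\[
H(V)(x)=\sum_{n\in\N}\dot{\tilde{s}}(e_n)H(V_{e_n})(x)
\]
and the proof of that theorem shows this convergence is uniform on compact subsets of $\R$. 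In particular, the series converges at each of $a,b,c,d$.

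Next I would plug this expansion into (\ref{eq:shear_from_vector_field}) applied to $H(V)$ at the geodesic $(b,d)$. Since the finite-difference operator in (\ref{eq:shear_from_vector_field}) is a linear combination of finitely many point evaluations, and we have pointwise convergence of $\sum_n \dot{\tilde{s}}(e_n) H(V_{e_n})$ at each of $a,b,c,d$, we may interchange the finite-difference operation with the sum. This gives
\[
H(\dot{\tilde{s}})((b,d))=\sum_{n\in\N}\dot{\tilde{s}}(e_n)\,\Delta_{b,d}(e_n),
\]
where $\Delta_{b,d}(e_n)$ denotes the result of applying the four-term cross-ratio operator from (\ref{eq:shear_from_vector_field}) to the single elementary Hilbert transform $H(V_{e_n})$.

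Finally I would identify $\Delta_{b,d}(e_n)$ with the closed-form hyperbolic-geometry expressions of Section 6. The computation carried out there was done under the assumption that the auxiliary geodesic is $(0,\infty)$, but the resulting expressions (\ref{eq:contrib_simple_shear_hilbert})--(\ref{eq:contrib_simple_shear_hilbert:b=0,d=infty}) were written purely in terms of the mutual hyperbolic invariants $\delta_{b,c},\delta_{a,d},\delta_{a,b},\delta_{c,d}$ and angle $\theta_{a,c}$ between the two geodesics, hence are M\"obius invariant and apply verbatim when $(0,\infty)$ is replaced by an arbitrary edge $e_n$ of $\tilde{\tau}$. The case split in the statement of the corollary matches exactly the case split of Section 6 (namely, whether $e_n$ shares $0$, $1$, or $2$ endpoints with the quadrilateral $(a,b,c,d)$), so inserting the corresponding formula in each case completes the proof. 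The only obstacle worth monitoring is the justification of the term-by-term interchange, which is handled uniformly by the pointwise convergence guaranteed by Theorem \ref{thm:Hilbert_shear_invariant}.
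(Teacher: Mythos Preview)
Your proposal is correct and follows essentially the same approach as the paper: the corollary is stated without proof, with the paper noting only that it is obtained ``similarly to the case when Zygmund vector field is not invariant,'' i.e.\ by applying the first variation formula (\ref{eq:shear_from_vector_field}) to the series expansion of $H(V)$ from Theorem \ref{thm:Hilbert_shear_invariant} and then invoking the M\"obius-invariant identifications (\ref{eq:contrib_simple_shear_hilbert})--(\ref{eq:contrib_simple_shear_hilbert:b=0,d=infty}) from Section~6. Your explicit justification of the interchange via pointwise convergence at $a,b,c,d$ is a welcome addition that the paper leaves implicit.
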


\section{The Weil-Petersson metric for Teichm\"uller spaces of finite surfaces
in shear coordinates}

We described above the Hilbert transform on the tangent vectors to
the Teichm\"uller space $T(S)$ of a finite area hyperbolic surface
$S$ with $s>0$ punctures. The Hilbert transform is the almost
complex structure to $T(S)$ (see \cite{NV}). The Weil-Petersson
metric $g_{WP}$ is K\"ahler \cite{Ah1}, \cite{Wo}, \cite{Wo1}. The
symplectic two form of the Weil-Petersson metric in the shear
coordinates is simply given as twice the Thurston algebraic
intersection form on the weights induced by the tangent vectors on
the train track associated to the ideal triangulation $\tau$ of $S$
(\cite{BoS}, see also \cite{Wo}, \cite{Pe2}). Therefore, we obtain
the following formula in terms of shear coordinates.

\begin{theorem}
\label{thm:WP_in_shears} Let $S$ be a finite area hyperbolic surface
with $s>0$ punctures and let $\tau$ be an ideal triangulation of
$S$. Given two infinitesimal shear functions
$$
\dot{s}_1,\dot{s}_2:\tilde{\tau}\to\R
$$
which represent two tangent vectors $v_1,v_2$ to the Teichm\"uller
space $T(S)$ of $S$ at the basepoint $[S,id]$, the Weil-Petersson
inner product is given by
$$
g_{WP}(v_1,v_2)=2i(\dot{s}_1,H(\dot{s}_2))
$$
where $H(\dot{s}_2)$ is the infinitesimal shear function of the
Hilbert transform $H(v_2)$ and $i(\cdot ,\cdot )$ is the Thurston's
algebraic intersection number between infinitesimal shear functions
defined on $\tau$.
\end{theorem}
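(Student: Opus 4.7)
The plan is to assemble the theorem from three ingredients already in place: (i) the Hilbert transform $H$ realizes the almost complex structure $J$ on $T_{[S,\mathrm{id}]}T(S)$ (via the identification of tangent vectors with normalized Zygmund vector fields, together with Corollary \ref{cor:shear_of_Hilbert_invariant} which interprets $H$ purely in shear coordinates); (ii) the Weil-Petersson metric is Kähler, so the symplectic form $\omega_{WP}$ and the Riemannian form $g_{WP}$ are related by $g_{WP}(v_1,v_2)=\omega_{WP}(v_1,Jv_2)$; and (iii) the Bonahon--Sözen formula \cite{BoS}, which identifies the Weil-Petersson symplectic form, written in shear coordinates for an ideal triangulation $\tau$ of $S$, with twice the Thurston algebraic intersection number on the train track carrying $\tau$, i.e.\ $\omega_{WP}(v_1,v_2)=2\,i(\dot s_1,\dot s_2)$.

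First I would make precise the identification of the tangent space $T_{[S,\mathrm{id}]}T(S)$ with the space of $G$-invariant infinitesimal shear functions $\dot{\tilde s}:\tilde\tau\to\R$ satisfying the cusp condition, as described in Section 8. Under this identification, a tangent vector $v_2$ lifts to a $G$-invariant Zygmund vector field $V_2$ on $\hat\R$, and Theorem \ref{thm:Hilbert_shear_invariant} together with Corollary \ref{cor:shear_of_Hilbert_invariant} shows that $H(V_2)$ is again $G$-invariant with infinitesimal shear function $H(\dot{\tilde s}_2)$. Since $H$ is the almost complex structure $J$ on the universal Teichmüller space, and the inclusion $T(S)\hookrightarrow T(\H)$ is holomorphic, $H(\dot{\tilde s}_2)$ descends to an infinitesimal shear function $H(\dot s_2)$ on $\tau$ representing $J(v_2)\in T_{[S,\mathrm{id}]}T(S)$. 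This is the step where one needs to check that $H(\dot{\tilde s}_2)$ automatically satisfies the cusp condition, which follows because $J$ preserves the tangent space to $T(S)$.

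Next I would invoke the Kähler identity. With $J=H$ on the shear parametrization of $T_{[S,\mathrm{id}]}T(S)$,
\begin{equation*}
g_{WP}(v_1,v_2)\;=\;\omega_{WP}(v_1,J v_2)\;=\;\omega_{WP}(v_1,H(v_2)).
\end{equation*}
Then (iii), the Bonahon--Sözen formula expressing $\omega_{WP}$ as twice the Thurston algebraic intersection pairing in shear coordinates, gives
\begin{equation*}
\omega_{WP}(v_1,H(v_2))\;=\;2\,i(\dot s_1,H(\dot s_2)),
\end{equation*}
which is the claimed formula. The convergence and well-definedness of the right-hand side follow from the co-finiteness of $G$ (only finitely many intersection contributions survive modulo $G$) and from Corollary \ref{cor:shear_of_Hilbert_invariant}, which produces $H(\dot s_2)$ as an honest $G$-invariant function on $\tilde\tau$.

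The main obstacle is not the computation itself — once (i), (ii), and (iii) are in hand the identity is a one-line consequence — but the verification that the Hilbert transform constructed via the shear series in Section 8 really coincides with the almost complex structure on $T(S)$ at the basepoint, that it preserves the subspace of tangent vectors to $T(S)$ (equivalently, that $H(\dot{\tilde s})$ inherits $G$-invariance and the cusp condition from $\dot{\tilde s}$), and that the algebraic intersection number $i(\dot s_1,H(\dot s_2))$ is well-defined on the edges of $\tau$. These are precisely the points controlled by Theorem \ref{thm:Hilbert_shear_invariant}, Corollary \ref{cor:shear_of_Hilbert_invariant}, and the Bonahon--Sözen identification \cite{BoS}, so the argument reduces to citing these results and recording the Kähler identity.
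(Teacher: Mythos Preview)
Your proposal is correct and follows exactly the same approach as the paper: the paper's proof is simply the paragraph preceding the theorem, which assembles (i) $H=J$ via \cite{NV}, (ii) the K\"ahler identity for $g_{WP}$ via \cite{Ah1,Wo,Wo1}, and (iii) the Bonahon--S\"ozen identification of $\omega_{WP}$ with twice the Thurston intersection form \cite{BoS}. If anything, you are more explicit than the paper about why $H(\dot{\tilde{s}}_2)$ descends to $\tau$ and satisfies the cusp condition.
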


\section{Continuous extension of the Hilbert transform and the
Weil-Petersson metric to the boundary of the Teichm\"uller space}

Let $S$ be a finite area hyperbolic surface with $s>0$ punctures and
let $\bar{T}(S)$ be the augmented Teichm\"uller space of $S$
\cite{Abik}, \cite{bers}, \cite{wolp1}. Our goal is to show that the
Hilbert transform and the Weil-Petersson metric extend by continuity
to the completion of $T(S)$. Masur \cite{mas} proved that the
Weil-Petersson hermitian metric continuously extend to the augmented
Teichm\"uller space $\bar{T}(S)$. More recently, Roger \cite{Ro}
showed that the Weil-Petersson symplectic two form on $T(S)$ extends
by continuity to the augmented Teichm\"uller space $\bar{T}(S)$
using the shear parametrization of $T(S)$ with appropriate
degeneration of the ideal triangulation when the point is in
$\bar{T}(S)\setminus T(S)$. We show that the Hilbert transform in
terms of the shears for $T(S)$ extends by continuity to the
augmented Teichm\"uller space $\bar{T}(S)$.

One description of the manifold structure on the augmented
Teichm\"uller space $\bar{T}(S)$ is given in terms of the plumbing
coordinates for a neighborhood of a point on the boundary of $T(S)$
\cite{Abik}, \cite{bers}. Another description is given by
 extending the Fenchel-Nielsen coordinates for $T(S)$
to allow the lengths of some of the closed geodesics of the pants
decomposition to be zero \cite{Abik}, \cite{bers}, \cite{wolp1}. Let
$\sigma =\{\gamma_1,\ldots ,\gamma_k\}$ be a set of mutually
non-homotopic, non-intersecting and non-trivial simple closed curves
on $S$. We denote by $\mathcal{S}(\sigma )$ the set of all (marked)
nodded Riemann surfaces $R$ obtained from $S$ by pinching curves in
$\sigma$. Then the augmented Teichm\"uller space consists of $T(S)$
and $\mathcal{S}(\sigma )$ over all $\sigma$ as above.

We use the plumbing coordinates for our purposes. Let $R_t$, $0\leq
t<1$, be a path of marked Riemann surfaces in $T(S)$ such that its
limit point $R^{*}$ as $t\to 1$ is on the boundary of $T(S)$ in
$\mathcal{S}(\sigma )$. Therefore $R^{*}$ is a nodded Riemann
surface with nodes corresponding to the curves of $\sigma$. The path
$R_t$ is described by the plumbing construction in the neighborhood
of the nodes in $R^{*}$ (see \cite{mas}). A tangent vector to
$R^{*}$ on the boundary $\bar{T}(S)\setminus T(S)$ is represented by
a (smooth) Beltrami differential $\nu$ which is supported on
$R^{*}\setminus U$, where $U$ is the union of neighborhoods of nodes
of $R^{*}$ which correspond to the curves in $\sigma$. Then the
Beltrami differential $\nu$ on $R_t\setminus U$ represents tangent
vectors at $R_t\in T(S)$ which converge to $\nu$ on $R^{*}$. We
normalize the universal coverings $\pi_t:\H\to R_t$ and $\pi:\H\to
R^{*}$ such that they map $i\in\H$ onto a fixed point $p\in
R_t\setminus U=R^{*}\setminus U$. Since the differentiable
structures on $R_t\setminus U$ and $R^{*}\setminus U$ are the same,
we can further normalize $\pi_t,\pi$ such that they map a fixed
tangent direction at $i\in\H$ onto a fixed tangent direction at $p$.

Let $\tilde{\nu}_t=(\pi_t)_{*}\nu$ and $\tilde{\nu}=(\pi )_{*}\nu$
be the pull-backs of $\nu$ by the universal coverings. The Zygmund
vector fields $V_t$ on $\hat{\R}$ corresponding to $\tilde{\nu}_t$
converge pointwise on $\hat{\R}$ to the Zygmund vector field $V$
corresponding to $\tilde{\nu}$ because $\tilde{\nu}_t$ converges
pointwise to $\tilde{\nu}$ on $\H$. The Zygmund norms of $V_t,V$ are
uniformly bounded because
$\|\tilde{\nu}_t\|_{\infty},\|\tilde{\nu}\|_{\infty}$ are uniformly
bounded.

We describe the convergence of tangent vectors as they approach to
the boundary of $T(S)$ in terms of shear parametrization. We first
recall the convergence of the path of infinitesimal shear functions
corresponding to a path of points in $T(S)$ that converges to a
boundary point $R\in\mathcal{S}(\sigma )$ described by J. Roger
\cite{Ro}. Let $\lambda =\{ h_1,h_2,\ldots ,h_n\}$ be an ideal
triangulation of the punctured surface $S$. Consider the arcs
$h_i\cap (S\setminus\sigma )$ for $i=1,2,\dots ,n$. They divide the
surface $S\setminus \sigma$ into triangles and bigons \cite{Ro}. We
group the edges of $\lambda\cap (S\setminus\sigma )$ into homotopy
classes relative to punctures of $S$ and to $\sigma$. In this
fashion an ideal triangulation $\mu =\{ g_1,g_2,\ldots ,g_n\}$ of
$S\setminus\sigma$ is obtained. Let $k_{i,j}$ be the number of arcs
of $h_j\cap (S\setminus\sigma )$ that are homotopic to $g_i$. Then

\begin{proposition} \cite{Ro}
Let $R_t\in T(S)$ for $t\in [0,1)$ be a path which converges to
$R^{*}\in\mathcal{S}(\sigma )\subset \bar{T}(S)$ as $t\to 1$. Let
$s_t:\lambda\to\mathbf{R}$ be the infinitesimal shear function for
$R_t$ and let $s^{*}:\mu\to\mathbf{R}$ be the infinitesimal shear
function for $R^{*}$. Then
\begin{equation*}
\lim_{t\to 1} \sum_{j=1}^nk_{i,j}s_t(h_j)=s^{*}(g_i).
\end{equation*}
\end{proposition}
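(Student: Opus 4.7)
The plan is to analyze the convergence using the plumbing description of $\bar{T}(S)$. Near each pinched curve $\gamma_k \in \sigma$, the hyperbolic metric on $R_t$ develops a long collar which degenerates to a pair of cusps as $t\to 1$; away from such collars, the metric on $R_t$ converges uniformly on compact subsets to the metric of $R^{*}\setminus\sigma$.

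First I would recall the shear formula $s_t(h_j)=\log cr(a,b,c,d)$ in the universal cover, where $b,d$ are the endpoints of a lift of $h_j$ and $a,c$ are the apex vertices of the two adjacent triangles of the lift of $\lambda$. The key lemma to establish is a telescoping identity for the $\log$-cross-ratio: if a geodesic edge $h$ of $R_t$ crosses a simple closed geodesic $\gamma_k \in \sigma$ transversally, then in the universal cover the lift of $h$ is separated by a lift of $\gamma_k$ into two subarcs $h^1, h^2$, and one has
\begin{equation*}
s_t(h) \;=\; s_t(h^1) + s_t(h^2) + T_k(h;t),
\end{equation*}
where $T_k(h;t)$ is a twist correction depending on $\ell_k(t)$ and the relative twist parameter along $\gamma_k$, and $s_t(h^i)$ is the shear (with respect to the triangulation $\mu$ naturally inherited from $\lambda$) of the subarc. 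Iterating this identity gives a decomposition of $s_t(h_j)$ into a sum of subarc-shears plus twist corrections, one per crossing of $h_j$ with $\sigma$.

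Second, I would use the convergence of hyperbolic metrics on compact subsets of $R^{*}\setminus\sigma$ to conclude that each subarc-shear converges: if a subarc of $h_j$ is homotopic to $g_i\in\mu$, then its shear on $R_t$ tends to $s^{*}(g_i)$ as $t\to 1$. The coefficient $k_{i,j}$ counts precisely the number of subarcs of $h_j$ in the homotopy class $g_i$, so weighting by $k_{i,j}$ and summing over $j$ collects every such subarc, producing $s^{*}(g_i)$ in the limit up to the residual twist contributions.

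Third -- and this is the heart of the argument -- I would show that the twist corrections $T_k(h_j;t)$ cancel when one forms $\sum_j k_{i,j}\,s_t(h_j)$: the algebraic sum of their coefficients, as $j$ varies with weights $k_{i,j}$, vanishes. This is a combinatorial statement about how arcs of $\lambda$ are grouped (by homotopy class in $S\setminus\sigma$) to form $\mu$, and it is essentially forced by the fact that $\mu$ is a genuine ideal triangulation of $S\setminus\sigma$, so no residual bigon contributions arise. The main obstacle is exactly this cancellation: individually the $s_t(h_j)$ for $h_j$ crossing $\sigma$ need not converge, since a path approaching the boundary of $T(S)$ can undergo unbounded Dehn twisting about the curves of $\sigma$; the weights $k_{i,j}$ are designed precisely so that the divergent twist contributions reorganize into a Dehn-twist-invariant combination and drop out, leaving the convergent sum of subarc-shears equal to $s^{*}(g_i)$.
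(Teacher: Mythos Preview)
The paper does not give its own proof of this proposition: it is quoted from Roger \cite{Ro} and stated without argument. What the paper \emph{does} prove is the analogous statement for tangent vectors (Proposition~\ref{prop:conv_shears_at_boundary}, with $\dot s_t$ in place of $s_t$), and that proof is organized quite differently from your sketch. There one lifts to the universal cover, writes each shear as the four-term expression $\dot s((b,d))=V(c,b)+V(d,a)-V(d,c)-V(b,a)$, arranges the lifts $\tilde h_{i,1}(t),\dots,\tilde h_{i,j}(t)$ as diagonals of a chain of adjacent quadrilaterals, and observes that every side shared by two quadrilaterals contributes with opposite signs, so the sum telescopes down to four boundary terms which converge to $\dot s^{*}(g_i)$ because $V_t\to V^{*}$ pointwise. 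No edge is ever cut at $\gamma_k$, and no twist correction appears.

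Your proposal has a genuine gap at the first step. The identity $s_t(h)=s_t(h^1)+s_t(h^2)+T_k(h;t)$ presupposes a meaning for $s_t(h^i)$ on $R_t$, but the subarcs $h^i$ are not edges of any ideal triangulation of $R_t$: one endpoint of each lies on the closed geodesic $\gamma_k$, not at a cusp, so there is no quadruple of ideal points and hence no cross-ratio. You can manufacture an ad hoc shear using the foot of $h^i$ on $\gamma_k$, but then the ``correction'' $T_k$ carries all of the geometry and your step three --- the cancellation of the $T_k$'s --- becomes the whole problem rather than a combinatorial remark. Your justification (``$\mu$ is a genuine ideal triangulation, so no bigon contributions arise'') does not address this: individual $s_t(h_j)$ genuinely diverge along Dehn-twist rays, and you have not produced the algebraic identity that kills the divergent part of $\sum_j k_{i,j}T_k(h_j;t)$. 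The paper's method (for the infinitesimal case) avoids the issue entirely by telescoping at the level of the four cross-ratio terms rather than by splitting edges at $\sigma$.
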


In other words, the value $s^{*}(g_i)$ is the limit of the sum of
the values of $s_t$ on the arcs of $\lambda\cap (S\cap\sigma )$
which are homotopic to $g_i$.

We continue to work with the path $R_t\in T(S)$ for $t\in [0,1)$
which converges to $R^{*}\in\mathcal{S}(\sigma )$ as $t\to 1$. Let
$u_t$ and $u^{*}$ be tangent vectors based at $R_t$ and $R^{*}$ such
that $R_t\to R^{*}$ and $u_t\to u^{*}$ as $t\to 1$. Let
$s_t,\dot{s}_t:\lambda\to\mathbf{R}$ and
$s^{*},\dot{s}^{*}:\mu\to\mathbf{R}$ be the infinitesimal shear
functions for $R_t$, $u_t$, $R^{*}$ and $u^{*}$, respectively. Then
the sum of $s_t,\dot{s}_t$ shears for the geodesics in $\lambda$ at
each puncture of $S$ is zero. Similarly, the sum of
$s^{*},\dot{s}^{*}$ shears for the geodesics in $\mu$ at each
puncture of $S\setminus\sigma$ is zero, where $\alpha\in\sigma$ is
considered a puncture of $S\setminus \sigma$.

\begin{proposition} \label{prop:conv_shears_at_boundary} Assume that $s_t,\dot{s}_t,s^{*},\dot{s}^{*}$ are as
above. Then
$$
\lim_{t\to 1}\sum_{j=1}^nk_{i,j}\dot{s}_t(h_j)=\dot{s}^{*}(g_i).
$$
\end{proposition}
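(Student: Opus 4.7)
The plan is to derive the infinitesimal statement as the first variation of Roger's proposition (the preceding one). Concretely, I choose one-parameter families $R_t^{(r)}\in T(S)$ and $R^{*,(r)}\in\mathcal{S}(\sigma)$ with $R_t^{(0)}=R_t$, $R^{*,(0)}=R^*$, $\partial_r|_{r=0} R_t^{(r)}=u_t$ and $\partial_r|_{r=0}R^{*,(r)}=u^*$. Following the setup described in the paragraph preceding the proposition, these are built from a single smooth Beltrami differential $\nu$ supported on $R^*\setminus U$, which also represents a tangent vector at $R_t$ via the restriction to $R_t\setminus U=R^*\setminus U$ under the normalized coverings. For each fixed $r$, Roger's proposition applied to the path $R_t^{(r)}\to R^{*,(r)}$ gives
\[
\lim_{t\to 1}\sum_{j=1}^n k_{i,j}\, s_t^{(r)}(h_j)=s^{*,(r)}(g_i),
\]
and differentiating in $r$ at $r=0$ yields the claimed identity, provided one can exchange $\lim_{t\to 1}$ with $\partial_r|_{r=0}$.

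To justify the interchange I would invoke the cross-ratio formula (\ref{eq:shear_from_vector_field}), which expresses each $\dot{s}_t(h_j)$ as a linear combination of divided differences of the Zygmund vector field $V_t$ on $\hat{\R}$ representing $u_t$, evaluated at the four endpoints of the lift of the $\lambda$-quadrilateral with diagonal $h_j$. The weighted sum $\sum_j k_{i,j}\dot{s}_t(h_j)$ then telescopes in the universal cover: the divided-difference terms associated to the internal diagonals of the chain of arcs homotopic to a lift $\tilde g_i$ cancel in pairs, leaving a single divided-difference expression evaluated at four outer vertices. These outer vertices depend on $t$ but converge, as $t\to 1$, to the endpoints of the $\tilde\mu$-quadrilateral around $\tilde g_i$. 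Combined with the pointwise convergence $V_t\to V$ on $\hat{\R}$ and the uniform bound on the Zygmund norms of $V_t$ -- both established in the paragraph preceding the proposition -- this upgrades to locally uniform convergence on $\hat{\R}$, which is precisely what is needed for the divided differences to converge and for the interchange of limit and derivative to hold.

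The main obstacle I anticipate is the combinatorial-geometric identification of the four outer vertices produced by the telescoping with the endpoints of the $\tilde\mu$-quadrilateral containing $\tilde g_i$. This requires a careful analysis of how the $k_{i,j}$ arcs in $\lambda\cap(S\setminus\sigma)$ homotopic to $g_i$ are arranged in the universal cover as a fan accumulating on the parabolic fixed point associated to a pinched curve of $\sigma$, and how this fan unfolds as $t\to 1$. The same geometric picture underlies Roger's proof of the preceding proposition, so the argument here reduces to the first-variation version of his identification; once that geometric picture is in place, the analytic convergence is handled cleanly by the uniform Zygmund estimate already recorded in the excerpt.
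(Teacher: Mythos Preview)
Your second and third paragraphs capture the paper's approach: express each shear $\dot{s}_t(\tilde h_{i,l})$ via the divided-difference formula for $V_t$, sum over the chain of lifts $\tilde h_{i,1}(t),\ldots,\tilde h_{i,j}(t)$ converging to $\tilde g_i$, exploit cancellation among the internal terms, and pass to the limit using the pointwise convergence $V_t\to V^*$ together with the uniform Zygmund bound. The first paragraph's idea of differentiating Roger's proposition and interchanging $\lim_{t\to1}$ with $\partial_r|_{r=0}$ is not what the paper does, and you yourself effectively abandon it in favor of the direct argument.

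There is one point where your sketch is too optimistic. You assert that the internal divided-difference terms ``cancel in pairs, leaving a single divided-difference expression evaluated at four outer vertices.'' In the paper's argument the cancellation is not exact for fixed $t$. The paper builds, for each consecutive pair $\tilde h_{i,l},\tilde h_{i,l+1}$, a wedge triangle with one short ``added'' side; the quadrilateral for $\tilde h_{i,l}$ is then the union of the two wedge triangles on either side. The short added sides do cancel exactly (each appears in two adjacent quadrilaterals with opposite signs). But the geodesics $\tilde h_{i,l}$ themselves, for $2\le l\le j-1$, appear as boundary sides of two neighboring quadrilaterals, and the corresponding divided differences $V_t(b_l,d_l)$ and $V_t(b_{l+1},d_{l+1})$ carry opposite signs without being equal: one endpoint is shared and the other two are merely close. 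The paper shows $|V_t(b_l,d_l)-V_t(b_{l+1},d_{l+1})|\to 0$ as $t\to 1$, so these residual terms vanish only in the limit. Thus the ``telescoped'' expression for finite $t$ is the four outer terms \emph{plus} a finite sum of error terms, each tending to zero. Your outline should account for this approximate (rather than exact) telescoping; it is precisely here, and in the identification of the outer vertices $a(t),b_{i,1}(t),c(t),d_{i,j}(t)$ with the limiting $\tilde\mu$-quadrilateral, that the geometric work you flag as the ``main obstacle'' actually lies.
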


\begin{proof}
Let $R_t^k$ be the component of $R_t\setminus\sigma$ which
corresponds to the component of $S\setminus\sigma$ which contains
arcs homotopic to $g_i$. Let $p\in R_t^k\setminus U=R^{*}\setminus
U$ be a fixed point with a fixed tangent direction.
 Let $\pi_t:\mathbf{H}\to R_t$ be the
universal covering of $R_t$ chosen such that $\pi_t(i)=p$ and a
fixed tangent direction at $i\in\H$ is mapped onto the fixed tangent
direction at $p\in R_t^k$. Let $V_t$ and $V^{*}$ be the Zygmund
vector fields corresponding to the tangent vectors $u_t$ and
$u^{*}$. By the above, we have that $V_t$ converges to $V^{*}$
pointwise on $\hat{\mathbf{R}}$.

The normalization of the universal covering $\pi_t:\mathbf{H}\to
R_t$ implies that the lifts $(\pi_t)^{-1}(\sigma )$ leave every
compact subset of $\mathbf{H}$. It follows that
$(\pi_t)^{-1}(\lambda )$ converges to $\pi^{-1}(\mu )$.

Let $\tilde{g}_i$ be a lift of $g_i$ to $\mathbf{H}$ under the
covering map $\pi$. Recall that $g_i$ corresponds to a homotopy
class of arcs in $\lambda\cap (S\setminus\sigma )$. If $g_i$ does
not end at $\sigma$, then there is a unique $h_i\in\mu$ which
corresponds to it. Let $\tilde{h}_i(t)$ be the lift of $h_i$ under
$\pi_t$ such that $\tilde{h}_i(t)\to \tilde{g}_i$ as $t\to 1$. If
$g_i$ has one or two ends at $\sigma$, then there exists finitely
many geodesics $h_{i,1},\ldots ,h_{i,j}\in\lambda$ written with
multiplicity such that $h_{i,l}\cap (S\setminus\sigma )$ for
$l=1,\ldots ,j$ is homotopic to $g_i$. Let $\tilde{h}_{i,l}(t)$ be
the lift of $h_{i,l}$ under $\pi_t$ for $l=1,\ldots ,j$ such that
$\tilde{h}_{i,l}(t)$ converges to $\tilde{g}_i$ as $t\to 1$.

We can assume that $\tilde{h}_{i,1}(t),\tilde{h}_{i,2}(t)\ldots
,\tilde{h}_{i,j}(t)$ are given in order such that each
$\tilde{h}_{i,l}$ is in between $\tilde{h}_{i,l-1}$ and
$\tilde{h}_{i,l+1}$ for $l=2,\ldots , j-1$. Then $\tilde{h}_{i,l}$
and $\tilde{h}_{i,l+1}$ share a common endpoint.

Let $a,b,c,d\in\hat{\mathbf{R}}$ be the endpoints of a quadrilateral
with the diagonal $(b,d)$. Then the shear $\dot{s}$ of the diagonal
$(b,d)$ associated to a vector field $V$ on $\hat{\mathbf{R}}$ is
given by
$$
\dot{s}((b,d))=\frac{V(c)-V(b)}{c-b}+\frac{V(d)-V(a)}{d-a}
-\frac{V(d)-V(c)}{d-c}-\frac{V(b)-V(a)}{b-a}.
$$
We write $V(c,b)$ instead of $\frac{V(c)-V(b)}{c-b}$ for short and
similar for the other expressions. Note that the expression
$\dot{s}((b,d))$ is the sum of four terms with positive or negative
sign in front. In fact, the sign in front can be determined as
follows. Orient the boundary sides of the triangle $T$ with vertices
$(a,b,d)$ such that $T$ stays on the left of the boundary sides. If
the terminal endpoint of a side $t_1$ of $T$ coincides with the
initial point of a side $t_2$ of $T$ then we say that $t_1$ {\it
comes before} $t_2$, otherwise $t_1$ {\it comes after} $t_2$. The
quadruple $(a,b,c,d)$ divides $\hat{\mathbf{R}}$ into four
intervals. Each expression above corresponds to one of the four
intervals. The sign in front of the expression is positive if the
geodesic whose endpoints agree with the endpoints of the interval
comes after the diagonal $(b,d)$, and it is negative otherwise.

Each geodesic of the sequence
$\tilde{h}_{i,1}(t),\tilde{h}_{i,2}(t)\ldots ,\tilde{h}_{i,j}(t)$
intersects one lift $\tilde{\alpha}_t$ of some $\alpha\in\sigma$
under the covering map $\pi_t$. Thus $\tilde{\alpha}_t$ separates
one endpoint of each geodesic of the sequence from $i\in\mathbf{H}$
when $t$ is close enough to $1$. The set of other endpoints is
either separated from $i\in\mathbf{H}$ by another lift
$\tilde{\alpha}'$ of a geodesic $\alpha'\in\sigma$, or all endpoints
are equal. In both cases, the sequence of geodesics
$\tilde{h}_{i,1}(t),\tilde{h}_{i,2}(t)\ldots ,\tilde{h}_{i,j}(t)$
converges to $\tilde{g}_i$ as $t\to 1$. Each two consecutive
geodesics $\tilde{h}_{i,l}(t),\tilde{h}_{i,l+1}(t)$ form a
hyperbolic wedge and we make a hyperbolic triangle by adding a
geodesic to this wedge whose endpoints are the endpoints of the two
geodesic which are not common. The added geodesic is separated from
$i\in\mathbf{H}$  by either $\tilde{\alpha}$ or $\tilde{\alpha}'$.
These added geodesic are small as $t\to 1$ in the Euclidean metric
on $\H\cup\mathbf{R}$. We add a triangle $T_1$ to the side of
$\tilde{h}_{i,1}$ opposite $\tilde{h}_{i,2}$ and a triangle $T_2$ to
the side of $\tilde{h}_{i,j}$ opposite $\tilde{h}_{i,j-1}$. The
added triangles $T_1$ and $T_2$ have all their sides in
$\tilde{\lambda}$. We obtained a sequence of adjacent triangles such
that the union of each two adjacent triangles forms a quadrilateral
whose diagonal is in the sequence
$\tilde{h}_{i,1}(t),\tilde{h}_{i,2}(t)\ldots ,\tilde{h}_{i,j}(t)$.
The added geodesics are small as $t\to 1$ and each of them appears
as a boundary side of exactly two quadrilaterals which share a
triangle.

We consider the sum $\sum_{l=1}^j\dot{\tilde{s}}_t(h_{i,l}(t))$,
where $\dot{\tilde{s}}_t(\tilde{h}_{i,l})=V_t(d_l,a_l)+V_t(c_l,b_l)-
V_t(d_l,c_l)-V_t(b_l,a_l)$ as before and $(a_l,b_l,c_l,d_l)$ are the
vertices of the quadrilateral whose diagonal is
$\tilde{h}_{i,l}(t)$. Each small added geodesic $(a',b')$ is
contained in exactly two adjacent quadrilaterals. The sign of
$V(a',b')$ is determined with respect to the diagonal of the
quadrilateral. Since two quadrilaterals have different diagonals it
follows that the two terms for $V_t(a',b')$ have different signs in
the sum and they cancel out.

We continue the analysis of the terms in the above sum. Note that
$\tilde{h}_{i,1}(t)$ and $\tilde{h}_{i,j}(t)$ appear as boundary
sides of exactly one quadrilateral and that $\tilde{h}_{i,l}(t)$ for
$l=2,\ldots ,j-1$ appear as boundary sides of exactly two
quadrilaterals. Let
$\tilde{h}_{i,l}(t)=(b_l(t),d_l(t)),\tilde{h}_{i,l+1}(t)(b_{l+1}(t),d_{l+1}(t))$
be two adjacent geodesics in the above sequence which are boundary
sides of the above triangle with one small side, where either
$b_l(t)=b_{l+1}(t)$ and $d_l(t))\neq d_{l+1}(t))$, or $b_l(t)\neq
b_{l+1}(t)$ and $d_l(t))= d_{l+1}(t))$. The triangle appears as one
half of two quadrilaterals which implies that $V_t(b_l(t),d_l(t))$
and $V_t(b_{l+1}(t),d_{l+1}(t))$ have different signs in the above
sum. It is clear that
$|V_t(b_l(t),d_l(t))-V_t(b_{l+1}(t),d_{l+1}(t))|\to 0$ as $t\to 1$
because $b_l(t)$ and $b_{l+1}(t)$, and $d_l(t)$ and $d_{l+1}(t)$ are
close for $t$ close to $1$. It follows that the total sum of terms
$V_t(b_l(t),d_l(t))$ for the sequence $\{
\tilde{h}_{i,l}(t)\}_{l=1}^j$ converges to zero as $t\to 1$.

Let $a(t)$ be the third vertex of $T_1$ and let $c(t)$ be the third
vertex of $T_2$. Then $a(t)$ and $c(t)$ converge as $t\to 1$ to the
vertices of the quadrilateral which is the union of two adjacent
triangles in $\mathbf{H}\setminus\pi^{-1}(\mu )$ and which has
$\tilde{g}_i$ as its diagonal. The above considerations imply that
\begin{equation*}
\begin{split}
\lim_{t\to 0}\sum_{l=1}^j\dot{s}_t(h_{i,l})=\lim_{t\to 0}\Big{[}
V_t(c(t),b_{i,j}(t))+V_t(d_{i,1}(t),a(t))-\\
V_t(b_{i,1}(t),a(t))-V_t(d_{i,j}(t),c(t))\Big{]}= V(c,b)+\\
V(d,a)-V(b,a)-V(d,c)=\dot{s}^{*}(g_i)
\end{split}
\end{equation*}
where $g_i=(b,d)$. The proposition follows because
$\sum_{l=1}^j\dot{s}_t(h_{i,l})=\sum_{j=1}^nk_{i,j}\dot{s}_t(h_j)$
by the definition of $k_{i,j}$.
\end{proof}

Let $u_t$ for $t\in [0,1)$ be a path of tangent vectors to $T(S)$
that limits to a tangent vector $u^{*}$ at a point
$R\in\mathcal{S}(\sigma )$ as $t\to 1$. Let $V_t,V^{*}$ be the
Zygmund vector fields corresponding to the covering maps $\pi_t,\pi
:\mathbf{H}\to R_t,R^{*}$ normalized as above. This implies that
$V_t$ converges pointwise to $V^{*}$ on $\mathbf{R}$ as $t\to 1$ and
the supremum of the Zygmund norms of $V_t$ is finite. Then
$H(V_t)(x)\to H(V)(x)$ as $t\to 1$ uniformly on compact subsets of
$\mathbf{R}$ by Lemma \ref{lem:conv_of_Hilb_transform}.

Let $\dot{\tilde{s}}_t,\dot{\tilde{s}}^{*}$ be the infinitesimal
shear functions corresponding to the tangent vectors $u_t,u^{*}$.
Then by Proposition \ref{prop:conv_shears_at_boundary} we have that
$$
\lim_{t\to 0} \sum_{j=1}^l
k_{i,j}H(\dot{s}_t)(h_{i,j})=\dot{s}^{*}(g_i).
$$

This combines with the results of Roger \cite{Ro} to imply a
topological version of a theorem of Masur:

\begin{theorem}\cite{mas} The Weil-Petersson metric on $T(S)$ extends
by continuity to the closure $\bar{T}(S)$.
\end{theorem}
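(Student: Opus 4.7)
The plan is to reduce the theorem to three inputs that are already established in the excerpt: the shear-coordinate formula $g_{WP}(v_1,v_2)=2i(\dot{s}_1,H(\dot{s}_2))$ from Theorem~\ref{thm:WP_in_shears}; the compatibility of shear-function limits under pinching from Proposition~\ref{prop:conv_shears_at_boundary}; and Roger's theorem that Thurston's algebraic intersection number extends continuously to $\bar{T}(S)$ via the same pinching rule.

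First, I would fix a convergent sequence $R_t\to R^{*}$ in $\bar{T}(S)$ with $R^{*}\in\mathcal{S}(\sigma)$, and a pair of convergent paths $u_t^1\to u^{*,1}$, $u_t^2\to u^{*,2}$ of tangent vectors. Let $\lambda$ be an ideal triangulation of $S$ and $\mu$ the induced ideal triangulation of $S\setminus\sigma$, with $k_{i,j}$ the homotopy-multiplicities as in the excerpt. Let $\dot{s}_t^1,\dot{s}_t^2:\lambda\to\R$ and $\dot{s}^{*,1},\dot{s}^{*,2}:\mu\to\R$ be the corresponding infinitesimal shear functions. By Proposition~\ref{prop:conv_shears_at_boundary}, we already have $\sum_j k_{i,j}\dot{s}_t^k(h_j)\to \dot{s}^{*,k}(g_i)$ for $k=1,2$ and each $g_i\in\mu$.

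Second, I would show that exactly the same summation limit holds after applying the Hilbert transform, namely
\begin{equation*}
\sum_{j=1}^n k_{i,j}\,H(\dot{s}_t^2)(h_j)\ \longrightarrow\ H(\dot{s}^{*,2})(g_i)\qquad \text{as } t\to 1.
\end{equation*}
For this, let $V_t,V^{*}$ be the Zygmund vector fields on $\hat{\R}$ attached to $u_t^2,u^{*,2}$ via the normalized universal coverings $\pi_t,\pi:\H\to R_t,R^{*}$. As noted in the paragraph preceding Proposition~\ref{prop:conv_shears_at_boundary}, $V_t\to V^{*}$ pointwise on $\hat{\R}$ with uniformly bounded Zygmund norms, hence uniformly on compact subsets of $\R$. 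Lemma~\ref{lem:conv_of_Hilb_transform} then yields $H(V_t)\to H(V^{*})$ uniformly on compact subsets of $\R$. The proof of Proposition~\ref{prop:conv_shears_at_boundary} only uses two facts about the ambient vector field: uniform-on-compacta convergence on $\hat{\R}$ and the shear formula~(\ref{eq:shear_from_vector_field}). Substituting $H(V_t)$ for $V_t$ therefore gives the desired weighted-sum limit for the Hilbert-transformed shears verbatim. This step is the main obstacle: one must verify that the cancellation of contributions from the short added geodesics along the degenerating sequence $\tilde h_{i,1}(t),\dots,\tilde h_{i,j}(t)$, together with the convergence of the endpoints of the bounding quadrilateral, still goes through when $V_t$ is replaced by the a priori less explicit object $H(V_t)$—but since both arguments depend only on values of the relevant vector field at four ideal points and on uniform equicontinuity near $\hat{\R}$, Lemma~\ref{lem:conv_of_Hilb_transform} suffices.

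Finally, I would invoke Roger's theorem \cite{Ro}, which says precisely that Thurston's algebraic intersection number $i(\cdot,\cdot)$ on pairs of shear functions on $\lambda$ extends continuously to the boundary under the summation rule $\sum_j k_{i,j}\alpha_t(h_j)\to\alpha^{*}(g_i)$. Applying this with $\alpha_t=\dot{s}_t^1$ and $\beta_t=H(\dot{s}_t^2)$—whose boundary limits $\dot{s}^{*,1}$ and $H(\dot{s}^{*,2})$ are provided by Proposition~\ref{prop:conv_shears_at_boundary} and by the second step above, respectively—we conclude
\begin{equation*}
i\bigl(\dot{s}_t^1,H(\dot{s}_t^2)\bigr)\ \longrightarrow\ i\bigl(\dot{s}^{*,1},H(\dot{s}^{*,2})\bigr).
\end{equation*}
By Theorem~\ref{thm:WP_in_shears}, the left side is $\tfrac12 g_{WP}(u_t^1,u_t^2)$ and the right side is $\tfrac12 g_{WP}(u^{*,1},u^{*,2})$, which is the asserted continuous extension of $g_{WP}$ to $\bar{T}(S)$.
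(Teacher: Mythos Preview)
Your proposal is correct and follows essentially the same route as the paper: establish $H(V_t)\to H(V^{*})$ via Lemma~\ref{lem:conv_of_Hilb_transform}, rerun the argument of Proposition~\ref{prop:conv_shears_at_boundary} with $H(V_t)$ in place of $V_t$ to get the weighted-sum convergence of the Hilbert-transformed shears, and then combine with Roger's continuity of the intersection form and Theorem~\ref{thm:WP_in_shears}. If anything, your justification for why the proof of Proposition~\ref{prop:conv_shears_at_boundary} transfers to $H(V_t)$ is more explicit than the paper's, which simply invokes the proposition.
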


\end{document}